\documentclass[12pt,twoside]{amsart}
\usepackage{amssymb,amsmath,amsfonts,amsthm}
\usepackage{mathrsfs}
\usepackage[X2,T1]{fontenc}
\usepackage[applemac]{inputenc}
\usepackage{cite}
\usepackage{latexsym}
\usepackage{verbatim}
\usepackage{soul}
\usepackage[dvipsnames]{xcolor}
\setlength{\oddsidemargin}{-5mm}
\setlength{\evensidemargin}{-5mm}
\setlength{\textwidth}{170mm}
\setlength{\textheight}{240mm}
\setlength{\topmargin}{-2mm}

\def\T{\mathbb{T}}

\def\R{\mathbb R}
\def\C{\mathbb C}
\def\N{\mathbb N}
\def\Z{\mathbb Z}
\def\Q{\mathbb Q}

\usepackage{colortbl}

\newcommand{\beqsn}{\arraycolsep1.5pt\begin{eqnarray*}}
\newcommand{\eeqsn}{\end{eqnarray*}\arraycolsep5pt}
\newcommand{\beqs}{\arraycolsep1.5pt\begin{eqnarray}}
\newcommand{\eeqs}{\end{eqnarray}\arraycolsep5pt}

\newtheorem{theorem}{Theorem}
\newtheorem{lemma}{Lemma}

\newtheorem{proposition}{Proposition}
\newtheorem{definition}{Definition}

\newtheorem{remark}{Remark}


\catcode`\@=11

\renewcommand{\section}%
   {\setcounter{equation}{0}\@startsection {section}{1}{\z@}{-3.5ex plus -1ex
  minus -.2ex}{2.3ex plus .2ex}{\Large\bf}}

\title[]{Globally solvable time-periodic evolution \\ equations in Gelfand-Shilov classes}

\author[F. Avila Silva]{Fernando de \'Avila Silva}
\address{Department of Mathematics  \\ Federal University of Paran\'a, \\ Curitiba, CEP 81531-980, Caixa Postal 19081, Paran\'a, Brazil}
\email{fernando.avila@ufpr.br}

\author[M. Cappiello]{Marco Cappiello}
\address{Dipartimento di Matematica ``G. Peano'' \\Universit\`a di Torino\\
	Via Carlo Alberto 10\\
	10123 Torino\\
	Italy}
\email{marco.cappiello@unito.it}
\begin{document}


\begin{abstract} In this paper we consider a class of evolution operators with coefficients depending on time and space variables $(t,x) \in \T \times \R^n$, where $\T$ is the one-dimensional torus and prove necessary and sufficient conditions for their global solvability in (time-periodic) Gelfand-Shilov spaces. The argument of the proof is based on a characterization of these spaces in terms of the eigenfunction expansions given by a fixed self-adjoint, globally elliptic differential operator on $\R^n$.  
\end{abstract}

\maketitle

\textit{keywords:} Gelfand-Shilov spaces, periodic equations, global solvability, Fourier analysis
\\

\textit{Mathematics Subject Classification:} 46F05, 35B10, 35B65, 35A01

\maketitle

\section{Introduction}

Global solvability for evolution operators with periodic coefficients is a huge field of investigation which counts many contributions, see for instance \cite{AlbanesePopiv, BDGK15, BDG18,  BDG17, Hou79, HouCardoso77, PETRONILHO2005}.
In the most part of situations, this problem is strictly connected with the one of the global hypoellipticity. In the above mentioned papers, the operators under consideration have coefficients which are periodic with respect to both time and space variables $(t,x)$ or just with respect to $t$ and independent from $x$. Hence, the coefficients are defined on the torus (or on products of tori). More recently, the problem of solvability has been investigated also in other compact setting with special attention to Lie groups, e.g. \cite{Araujo2019,AFR,KMR,KMR1}.  In particular, we point out that an important tool, present in all these references and here, is a  Fourier analysis characterizing the functional spaces under investigation.

The aim of this paper is to discuss global solvability for operators of the form 
\begin{equation}\label{op-intro}
	L= D_t + c(t)P(x,D_x), 
\end{equation}
where $D_t = i^{-1}\partial_t$, the coefficient $c(t)=a(t)+ib(t)$ is complex-valued and belongs to some Gevrey class $\mathcal{G}^{\sigma}(\T)$, $\sigma >1$, and $P(x, D_x)$ is a self-adjoint differential operator  of type
\begin{equation}\label{P-intro}
	P = P(x,D) = \sum_{|\alpha| + |\beta| \leq m} c_{\alpha,\beta} x^{\beta} \partial_x^{\alpha}, \ c_{\alpha,\beta} \in \R,
\end{equation}
of order	$m\geq 2$, 	satisfying  the global ellipticity property 
\begin{equation}\label{P-elliptic}
	p_m(x,\xi) = \sum_{|\alpha| + |\beta| = m} c_{\alpha,\beta} x^{\beta} (i\xi)^{\alpha} \neq 0, \quad (x,\xi) \neq (0,0).
\end{equation}

Before treating in detail the evolution operator \eqref{op-intro}, let us consider the operator $P$ in \eqref{P-intro}. Self-adjointness and condition \eqref{P-elliptic} imply that $P$ has a discrete spectrum consisting in a sequence of real eigenvalues $\lambda_j$ such that $|\lambda_j| \to \infty$ for $j \to \infty$ and satisfying
\begin{equation}\label{weyl}
	|\lambda_j| \sim  \rho j^{\, m/2n}, \ \textrm{ as } \ j \to \infty,
\end{equation}
for some positive constant $\rho$. Moreover, the eigenfunctions of $P$ form an orthonormal basis of $L^2(\R^n)$. The most relevant example is the Harmonic oscillator $P(x,D)= |x|^2-\Delta, $ where $\Delta$ denotes the standard Laplace operator on $\R^n$.
Such operators and their pseudodifferential generalizations have been deeply studied on the Schwartz space $\mathscr{S}(\R^n)$ of smooth rapidly decreasing functions and on the dual space of tempered distributions $\mathscr{S}'(\R^n)$, cf. \cite{Shubin}.  More recently, however, it has been shown that a more appropriate functional setting for such operators is given by the so-called Gelfand-Shilov spaces of type $\mathscr{S}$, introduced in \cite{GS2, GS3} as an alternative setting to the Schwartz space for the study of partial differential equations. 

Given $\mu >0, \nu >0$, the Gelfand-Shilov space $\mathcal{S}^\mu_\nu(\R^n)$ is defined as the space of all $f \in C^\infty(\R^n)$ such that
\begin{equation*}
	\sup_{\alpha, \beta \in \N^n} \sup_{x \in \R^n} A^{-|\alpha+\beta|} \alpha!^{-\nu}\beta!^{-\mu}
	|x^\alpha \partial_x^\beta f(x)| <+\infty 
\end{equation*}
for some $A>0$, or equivalently,
\begin{equation*}
	\sup_{\beta \in \N^n} \sup_{x \in \R^n} C^{-|\beta|} \beta!^{-\mu}\exp( c|x|^{1/\nu})
	| \partial_x^\beta f(x)| <+\infty 
\end{equation*}
for some $C,c>0$.
Elements of $\mathcal{S}^\mu_\nu(\R^n)$ are then smooth functions presenting uniform analytic or Gevrey estimates on $\R^n$ and admitting an exponential decay at infinity. The elements of the dual space $(\mathcal{S}^\mu_\nu)'(\R^n)$ are commonly known as \textit{temperate ultradistributions}, cf. \cite{Pil}.

In the last two decades, Gelfand-Shilov spaces have become very popular in the study of microlocal and time-frequency analysis with many applications to partial differential equations, see for instance \cite{Arias_GS, AACJEE, ACJDE, ACJMPA, TSU, CGR1, CGR2, CGR3, CPP, CoNiRo1, CoNiRo2, NR, Prangoski} and the references quoted therein. 
Concerning in particular the operators in \eqref{P-intro}, \eqref{P-elliptic}, we mention the hypoellipticity results in \cite{CGR1, CGR2} which show that the solutions $u \in \mathcal{S}'(\R^n)$ of the equation $Pu=f \in \mathcal{S}_\nu^\mu(\R^n)$ actually belong to $\mathcal{S}_\nu^\mu(\R^n)$. In particular, the eigenfunctions of $P$ are in $\mathcal{S}^{1/2}_{1/2}(\R^n)$.  Recently, these spaces have been also characterized in terms of eigenfunction expansions, see \cite{CGPR, GPR}. 
\\
\indent
In the paper \cite{AC}, we introduced the  \textit{time-periodic Gelfand-Shilov spaces}  $\mathcal{S}_{\sigma,\mu}(\T \times \R^n)$ with $\sigma \geq 1, \mu \geq 1/2$, ($\mathcal{S}_{\sigma,\mu}$ in short), as the space of all smooth functions on $\T \times \R^n$ such that
\begin{equation}\label{firstnorm}
	|u|_{\sigma, \mu, C}:=
	\sup_{\alpha, \beta \in \N^n, \gamma \in \N}C^{-|\alpha+\beta|-\gamma}\gamma!^{-\sigma} (\alpha!\beta!)^{-\mu}\sup_{(t,x) \in \mathbb{T}\times \R^n} |x^\alpha \partial_x^\beta \partial_t^\gamma u(t,x)|
\end{equation}
is finite 	for some positive constant $C$, and we studied the global hypoellipticity of the operator $L$ in \eqref{op-intro} in this setting. Notice that the elements of $\mathcal{S}_{\sigma, \mu}(\T \times \R^n)$   belong to the symmetric Gelfand-Shilov spaces $\mathcal{S}_{\mu}^{\mu} (\R^n)$, cf. \cite{GS2, GS3}, with respect to the variable $x$, while are Gevrey regular and periodic in $t$.

In order to achieve our result we adapted to the periodic setting a characterization of classical Gelfand-Shilov spaces in terms of eigenfunction expansions proved in \cite{GPR}. Precisely, the orthonormal basis of eigenfunctions $\{\varphi_j \}_{j \in \N}$ of $P$ allows to write any $u \in \mathcal{S}_{\sigma,\mu}(\T \times \R^n)$ (respectively $u \in \mathcal{S}'_{\sigma,\mu}(\T \times \R^n)$ ) as the sum of a Fourier series 
\begin{equation*}
	u(t,x) = \sum_{j \in \N} u_j(t) \varphi_j(x),
\end{equation*} where $u_j(t)$ is a sequence of Gevrey functions (respectively distributions) on the torus satisfying suitable exponential estimates.
This allowed to discretize the equation $Lu=f$ and to apply 
the typical arguments of the analysis on the torus. The results proved in \cite{AC} will be also used in the present paper and for this reason they are briefly recalled in Section \ref{sec-notations}.
\\ \indent Let us now come to the main results of the paper. 
In order to introduce a suitable notion of global solvability for our problem, let us consider the space
$$
\mathscr{F}_\mu = \bigcup_{\sigma > 1}\mathcal{S}_{\sigma, \mu}.
$$ 
\begin{definition} \label{defglobalsolvability1}
	The operator $L$ is said to be $\mathcal{S}_{\mu}$-globally solvable if for every  $f \in \mathscr{F}_\mu$ there exists a solution $u \in \mathscr{F}_\mu$ of the equation $Lu=f$.
\end{definition}

As standard in this type of problems, the solvability of the operator is strictly conditioned by the behavior of the imaginary part $b(t)$ of the coefficient $c(t)$ and in particular by its sign changing. Namely, when the sign of $b$ is constant, then the global solvability of $L$ is equivalent to the fact that the average $c_0 = (2\pi)^{-1}\int_0^{2\pi} c(t)\, dt$ satifies some algebraic conditions, called \textit{Diophantine conditions}, whereas when $b$ changes sign the global solvability is related with the topological properties of the sets 
\begin{equation}\label{superlevel}
	\Omega_r= \left\{t \in \T: \int_0^t b(s)\, ds >r \right\}, \qquad r \in \R,
\end{equation} 
and with the size of set
\begin{equation}\label{Z}
	\mathcal{Z} =\{j \in \N; \ \lambda_jc_0 \in \Z\}.
\end{equation}

The main result of this paper reads as follows.

\vskip0.2cm
\begin{theorem}\label{main_theorem}
	Let $L$ be defined by \eqref{op-intro}, \eqref{P-intro}, with $P$ self-adjoint and satisfying \eqref{P-elliptic}. Then:
	\begin{enumerate}
		\item [(a)] if $b$ does not change sign, then $L$ is $\mathcal{S}_{\mu}$-globally solvable if and only if $c_0$ satisfies the following condition:
		\vskip0.2cm
		\noindent	$(\mathscr{A})$ \, for every $\epsilon>0$ there exists $C_{\epsilon}>0$ such that
		\begin{equation*}
			| \tau - c_0\lambda_j| \geqslant C_{\epsilon} \exp \left(-\epsilon j^{\frac{1}{2n\mu}}\right),
		\end{equation*}
		for all $(\tau, j) \in \Z\times \N$, such that $\tau - c_0\lambda_j \neq 0$.
		\item [(b)] if $b$ changes sign, then $L$ is $\mathcal{S}_{\mu}$-globally solvable if and only if $\mathcal{Z}^C=\N \setminus \mathcal{Z}$ is finite  and the sets $\Omega_r$ in \eqref{superlevel} are connected  for all $r \in \R$.
	\end{enumerate}
\end{theorem}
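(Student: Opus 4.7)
The idea is to exploit the eigenfunction expansion associated to $P$ to decompose the equation $Lu=f$ into a countable family of scalar periodic ODEs, and then to analyze each of them via Fourier series on $\T$.

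\emph{Step 1: reduction to an infinite system of periodic ODEs.} By the characterization of $\mathcal{S}_{\sigma,\mu}(\T\times\R^n)$ and its dual in terms of eigenfunction expansions recalled from \cite{AC}, we write $u(t,x)=\sum_{j}u_j(t)\varphi_j(x)$ and $f(t,x)=\sum_{j}f_j(t)\varphi_j(x)$, where $(u_j)$ and $(f_j)$ are Gevrey sequences on $\T$ satisfying exponential estimates of the type $e^{-c j^{1/(2n\mu)}}$. The equation $Lu=f$ is equivalent to the sequence of scalar equations
\begin{equation*}
D_tu_j(t)+c(t)\lambda_j u_j(t)=f_j(t),\qquad j\in\N,\ t\in\T.
\end{equation*}
Thus $L$ is $\mathcal{S}_\mu$-globally solvable iff this family admits a solution in the correct class for every admissible $(f_j)$.

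\emph{Step 2: integrating factor.} Set $\tilde C(t)=\int_0^{t}(c(s)-c_0)\,ds$, which is $2\pi$-periodic and of Gevrey class $\mathcal{G}^{\sigma}(\T)$. The substitution $u_j(t)=e^{-i\lambda_j\tilde C(t)}v_j(t)$ reduces the ODE to the constant-coefficient equation
\begin{equation*}
D_tv_j+c_0\lambda_j v_j=e^{i\lambda_j\tilde C(t)}f_j(t)=:g_j(t),
\end{equation*}
whose Fourier-series counterpart on $\T$ is $(\tau-c_0\lambda_j)\hat v_j(\tau)=\hat g_j(\tau)$. Hence, recovering $u_j$ amounts to dividing by $\tau-c_0\lambda_j$ and controlling the loss uniformly in $j$.

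\emph{Step 3: case (a), $b$ with constant sign.} Here $\Im\tilde C(t)=\int_0^t(b(s)-b_0)\,ds$ is a bounded periodic Gevrey function, so the multiplier $e^{i\lambda_j\tilde C(t)}$ is controlled by $e^{K|\lambda_j|}$ with $K$ small, and preserves the Gelfand--Shilov class $\mathscr{F}_\mu$ up to slightly enlarging the parameter $\sigma$. Sufficiency of $(\mathscr{A})$ follows by dividing $\hat g_j(\tau)$ by $\tau-c_0\lambda_j$, absorbing the loss $e^{\epsilon j^{1/(2n\mu)}}$ into a larger Gevrey order $\sigma'>\sigma$, and reassembling $u$ in $\mathcal{S}_{\sigma',\mu}$. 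Necessity is proved by constructing, from any violation of $(\mathscr{A})$, a source $f\in\mathscr{F}_\mu$ supported on a sparse subsequence $(\tau_k,j_k)$ along which $|\tau_k-c_0\lambda_{j_k}|$ is too small, so that any formal solution fails the required exponential decay.

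\emph{Step 4: case (b), $b$ with sign change, and main obstacle.} In this regime $\Im\tilde C$ is unbounded in the ordering: its super-level sets are (up to a shift by $t b_0$) the sets $\Omega_r$ of \eqref{superlevel}. The explicit integrating-factor formula for $u_j$ involves the kernel $e^{i\lambda_j(C(s)-C(t))}$, whose modulus is governed by the oscillation of $\Im\tilde C$. Connectedness of each $\Omega_r$ is the classical Hounie--Malgrange condition guaranteeing that one may choose a base point of integration depending on the position of $t$ inside the super-level sets and obtain a periodic solution with uniformly controlled exponential estimates; conversely, if some $\Omega_r$ is disconnected, a Mittag-Leffler-type counterexample localized in the gap between components produces an unsolvable $f$. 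For the resonant modes $j\in\mathcal{Z}$, periodicity of $u_j$ imposes the compatibility condition $\int_0^{2\pi}e^{i\lambda_j C(s)}f_j(s)\,ds=0$, an obstruction to solvability; requiring $\mathcal{Z}^C$ finite ensures that this obstruction involves only finitely many linear constraints that can be absorbed, while an infinite $\mathcal{Z}^C$ allows the construction of an $f\in\mathscr{F}_\mu$ concentrated on non-resonant modes violating the required decay. The main difficulty lies precisely in this step: proving that the connectedness of all $\Omega_r$ yields estimates on $u_j$ of the form $|D_t^\gamma u_j(t)|\le C^{\gamma+1}\gamma!^{\sigma'}e^{-c\,j^{1/(2n\mu)}}$ uniformly in $t$ and $j$, and combining this with the handling of the finitely many resonant modes so that the reassembled $u$ lies in $\mathscr{F}_\mu$.
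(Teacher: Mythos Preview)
Your proposal contains a genuine gap in Step~2--3 that invalidates the reduction. You conjugate by $e^{-i\lambda_j\tilde C(t)}$ with $\tilde C(t)=\int_0^t(c(s)-c_0)\,ds$, whose imaginary part is $\int_0^t(b(s)-b_0)\,ds$. The multiplier therefore has modulus $\exp\bigl(\lambda_j\int_0^t(b(s)-b_0)\,ds\bigr)$, which is of size $e^{K|\lambda_j|}\sim e^{K j^{m/2n}}$ for some fixed $K>0$ determined by $b$. Since $m\ge 2$ and $\mu\ge 1/2$, one has $m/2n\ge 1/(2n\mu)$, so this growth dominates (or at best matches) the Gelfand--Shilov decay $e^{-\epsilon j^{1/(2n\mu)}}$ and the map $f_j\mapsto g_j=e^{i\lambda_j\tilde C}f_j$ is \emph{not} an isomorphism of $\mathscr{F}_\mu$. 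Enlarging $\sigma$ does nothing for the $j$-decay. The paper avoids this by conjugating only by the unimodular factor $e^{-i\lambda_j A(t)}$ with $A(t)=\int_0^t a(s)\,ds-a_0 t$ real, which reduces $L$ to $L_{a_0}=D_t+(a_0+ib(t))P$ while keeping $ib(t)P$ variable; the imaginary part is then handled directly through the integral representations \eqref{Solu-1}--\eqref{Solu-2} and, in the sign-changing case, by integrating along arcs inside the superlevel sets so that the kernel stays $\le 1$.

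There is also a confusion in Step~4: the compatibility conditions $\int_0^{2\pi}e^{i\lambda_j C(s)}f_j(s)\,ds=0$ arise for $j\in\mathcal{Z}$, not for $j\in\mathcal{Z}^C$. When $\mathcal{Z}^C$ is finite, $\mathcal{Z}$ is cofinite and there are \emph{infinitely} many such constraints; the point is rather that the finitely many $j\in\mathcal{Z}^C$ are the only modes for which the explicit periodic solution formulas \eqref{Solu-1}--\eqref{Solu-2} apply, and these require no estimate since they are finitely many. For the necessity of connectedness the paper does not build a ``Mittag--Leffler'' right-hand side but instead proves a H\"ormander-type closed-range inequality \eqref{H_condition} and exhibits explicit sequences $f_\ell,v_\ell$ (built from Gevrey cut-offs and the eigenfunctions $\varphi_\ell$) that violate it when some $\Omega_r$ is disconnected; similarly, the necessity of $\mathcal{Z}^C$ finite is obtained by a direct stationary-phase-type lower bound on $|u_{j_\ell}(t^*)|$ along a subsequence in $\mathcal{Z}^C$. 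Your sketch does not supply either mechanism.
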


Notice that the connectedness conditions in (b) appears frequently in the study of global solvability on the torus, see for instance 
\cite{BDGK15,BDG18,BDG17,Treves}.

The paper is organized as follows.
In Section \ref{sec-notations} we introduce time-periodic Gelfand-Shilov spaces and their characterization in terms of eigenfunction expansions. Moreover, we discretize the equation $Lu=f$ reducing it to a family of ordinary differential equations involving the Fourier coefficients of $u$ and $f$. Finally, we recall the results obtained in \cite{AC} about global hypoellipticity. In Section \ref{sec-solvability} we make some preparation to the proof of Theorem \ref{main_theorem}. Namely, we introduce a space of admissible functions $f$ out of which global solvability cannot be obtained and relate it to the kernel of the transpose operator $^tL$. Moreover, we prove necessary and sufficient conditions for global solvability in the case when the coefficient $c(t)$ is constant. Then we start to treat the case of time depending coefficients and show that it is possible to reduce $L$ to a normal form via a suitable transformation. In Section \ref{Sec_proof_main_the} we prove Theorem \ref{main_theorem}. The proof consists in several steps and the strategy is the following: first, we verify the sufficiency part in  Theorem \ref{suffientt_cond}. The analysis of the necessity part is the focus of Subsection \ref{sec_nec_cond}. The \textit{algebraic} conditions are given by Propositions \ref{prop_a_0.b_0},  \ref{prop1-nec-par}, and \ref{prop_b_chang}. Finally,  the topological condition on $\Omega_{r}$ is verified  by Theorem \ref{Theorem_nec_Omega}.

\section{Notations and preliminary results  \label{sec-notations}} 
Let us start by recalling some basic properties of the spaces $\mathcal{S}_{\sigma, \mu}$ and $\mathcal{S}'_{\sigma, \mu}$.

\subsection{Time-periodic Gelfand-Shilov spaces and eigenfunction expansions}

Throughout the paper we denote by $\mathcal{G}^{\sigma,h}(\T)$ , 
$h>0$ and $\sigma \geq 1$, the space of all smooth functions $\varphi \in C^{\infty}(\T)$ such that there exists $C>0$ satisfying 
\begin{equation*}
	\sup_{t \in \T} |\partial^{k}\varphi(t)| \leq C h^{k}(k!)^{\sigma}, \ \forall k \in \Z_+.
\end{equation*}
Hence, $\mathcal{G}^{\sigma,h}(\T)$ is a Banach space endowed with the norm
\begin{equation*}
	\|\varphi\|_{\sigma,h} = \sup_{k \in \Z_+}\left \{\sup_{t \in \T} |\partial^{k}\varphi(t)|  h^{-k}(k!)^{-\sigma}\right\},
\end{equation*}
and  the space of periodic Gevrey functions of order $\sigma$ is defined by
$$
\mathcal{G}^{\sigma}(\T) =\displaystyle \underset{h\rightarrow +\infty}{\mbox{ind} \lim} \;\mathcal{G}^{\sigma, h} (\mathbb{T}),
$$
Its dual space will be denoted by $(\mathcal{G}^{\sigma})'(\T)$.

Similarly, fixed $\sigma >1, \mu\geq 1/2, C>0$ and denoting by $\mathcal{S}_{\sigma, \mu, C}$ the space of all smooth functions on $\T \times \R^n$ for which the norm \eqref{firstnorm} is finite, it is easy to prove that $\mathcal{S}_{\sigma, \mu, C}$ is a Banach space and we can endow $\mathcal{S}_{\sigma, \mu} = \bigcup\limits_{C> 0}
\mathcal{S}_{\sigma,\mu,C}$ with the inductive limit topology
$$
\mathcal{S}_{\sigma,\mu}  =\displaystyle \underset{C\rightarrow +\infty}{\mbox{ind} \lim} \;\mathcal{S}_{\sigma,\mu,C}.
$$
We shall then denote by
$\mathcal{S}_{\sigma, \mu}'$  the space of all linear continuous forms $u: \mathcal{S}_{\sigma, \mu} \to \C$.

\vskip0.2cm
\begin{remark}
	By \cite[Lemma 3.1]{GPR} an equivalent norm to \eqref{firstnorm} on $\mathcal{S}_{\sigma,\mu,C}$ is given by the following:
	\begin{equation}\label{secondnorm}
		\| u \|_{\sigma, \mu,C} := \sup_{\gamma, M \in \N} C^{-M-\gamma} M!^{-m\mu} \gamma!^{-\sigma} \| P^M \partial_t^\gamma u \|_{L^2(\T \times \R^n)}
	\end{equation}
	In order to take advantage of the properties of $P$, in the proof of Theorem \ref{Theorem_nec_Omega} we will use the norm \eqref{secondnorm} rather than \eqref{firstnorm}. 
\end{remark}

Now we want to recover the Fourier analysis presented in \cite{AC}. With this purpose we recall the characterization of $\mathcal{S}_{\sigma, \mu}$ and $\mathcal{S}'_{\sigma, \mu}$ in terms of eigenfunction expansions. For this, let $\varphi_j \in \mathcal{S}_{1/2}^{1/2}(\R^n), j \in \N$, be the eigenfunctions of the operator $P$ in \eqref{P-intro}. We have the following results.

\vskip0.2cm
\begin{theorem}
	Let $\mu\geq 1/2$ and $\sigma \geq 1$ and let $u \in \mathcal{S}_{\sigma,\mu}'$. Then $u \in \mathcal{S}_{\sigma,\mu}$ if and only if 	it can be represented as 
	\begin{equation*}
		u(t,x) = \sum_{j \in \N} u_j(t) \varphi_j(x),
	\end{equation*}
	where
	$$
	u_j(t) = \int_{\R^n} u(t,x)\varphi_j(x)dx,
	$$
	and there exist $C >0$ and $\epsilon>0$ such that
	\begin{equation} \label{deccoeff}
		\sup_{t \in \T} | \partial_t^k u_j(t)| \leq
		C^{k+1} (k!)^{\sigma} \exp \left[-\epsilon j^{\frac{1}{2n\mu}} \right] \ \forall j,k \in \N.
	\end{equation}		
	
\end{theorem}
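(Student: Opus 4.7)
The plan is to prove the two directions of the equivalence separately, using the $L^{2}$--based norm \eqref{secondnorm}, the Weyl asymptotics \eqref{weyl}, and the orthonormality of the eigenfunctions $\{\varphi_{j}\}_{j\in\N}$ of $P$ in $L^{2}(\R^{n})$. Throughout, the identities $P^{M}\varphi_{j}=\lambda_{j}^{M}\varphi_{j}$ and the self-adjointness of $P$ will convert powers of $P$ into powers of $\lambda_{j}$ acting on the Fourier coefficients.

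For the necessity direction, assume $u\in\mathcal{S}_{\sigma,\mu,C_{0}}$ for some $C_{0}>0$ and set $u_{j}(t)=\int_{\R^{n}}u(t,x)\varphi_{j}(x)\,dx$. Using self-adjointness of $P$ we can write, for every $M,k\in\N$,
\[
\lambda_{j}^{M}\partial_{t}^{k}u_{j}(t)=\int_{\R^{n}}(P^{M}\partial_{t}^{k}u)(t,x)\,\varphi_{j}(x)\,dx.
\]
By Bessel's inequality applied in the variable $x$, the $L^{2}(\T)$-norm of the left-hand side is dominated by $\|P^{M}\partial_{t}^{k}u\|_{L^{2}(\T\times\R^{n})}$; combining this with the Sobolev embedding $H^{1}(\T)\hookrightarrow C^{0}(\T)$ applied to $\lambda_{j}^{M}\partial_{t}^{k}u_{j}$ yields
\[
|\lambda_{j}|^{M}\sup_{t\in\T}|\partial_{t}^{k}u_{j}(t)|\leq C\bigl(\|P^{M}\partial_{t}^{k}u\|_{L^{2}}+\|P^{M}\partial_{t}^{k+1}u\|_{L^{2}}\bigr).
\]
Estimating the right-hand side via \eqref{secondnorm} gives, for every $M\in\N$,
\[
\sup_{t\in\T}|\partial_{t}^{k}u_{j}(t)|\leq C_{1}\,C_{2}^{M+k}(M!)^{m\mu}(k!)^{\sigma}|\lambda_{j}|^{-M}.
\]
Optimizing over $M$ by Stirling (the minimum of $(C_{2})^{M}(M!)^{m\mu}|\lambda_{j}|^{-M}$ is attained near $M\sim c|\lambda_{j}|^{1/(m\mu)}$) yields an exponential bound of the form $\exp(-\epsilon'|\lambda_{j}|^{1/(m\mu)})$; inserting the Weyl estimate \eqref{weyl} converts this into $\exp(-\epsilon j^{1/(2n\mu)})$ for some $\epsilon>0$, which is precisely \eqref{deccoeff}.

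For the sufficiency direction, assume the decay \eqref{deccoeff} and set $u(t,x)=\sum_{j\in\N}u_{j}(t)\varphi_{j}(x)$. By orthonormality of $\{\varphi_{j}\}$ in $L^{2}(\R^{n})$, together with $P^{M}\varphi_{j}=\lambda_{j}^{M}\varphi_{j}$,
\[
\|P^{M}\partial_{t}^{\gamma}u\|_{L^{2}(\T\times\R^{n})}^{2}=\sum_{j\in\N}\lambda_{j}^{2M}\int_{\T}|\partial_{t}^{\gamma}u_{j}(t)|^{2}\,dt\leq C_{3}C_{4}^{2\gamma}(\gamma!)^{2\sigma}\sum_{j\in\N}\lambda_{j}^{2M}\exp\bigl(-2\epsilon j^{1/(2n\mu)}\bigr).
\]
Using $|\lambda_{j}|\leq C_{5}j^{m/(2n)}$ from \eqref{weyl}, the sum in $j$ is comparable to the integral
\[
\int_{0}^{\infty}r^{2mM/n}\exp\bigl(-2\epsilon r^{1/(2n\mu)}\bigr)dr,
\]
which after the change of variables $r=s^{2n\mu}$ reduces to a $\Gamma$-function evaluation and, by Stirling, is bounded by $C_{6}^{2M}(M!)^{2m\mu}$. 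Therefore $\|P^{M}\partial_{t}^{\gamma}u\|_{L^{2}}\leq C_{7}^{M+\gamma}(M!)^{m\mu}(\gamma!)^{\sigma}$, which by \eqref{secondnorm} means $u\in\mathcal{S}_{\sigma,\mu}$.

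The main technical obstacle is the bookkeeping in the Stirling/optimization step: the exponent $1/(2n\mu)$ in the conclusion \eqref{deccoeff} arises from the interplay between the exponent $m\mu$ governing the $P$-regularity in the norm \eqref{secondnorm} and the Weyl exponent $m/(2n)$ in \eqref{weyl}. Keeping the constants consistent in both the Stirling minimization (for necessity) and the Gamma-function bound (for sufficiency) requires care, but both computations follow the same template already developed in \cite{GPR} for the $x$-only setting; the role of the Sobolev embedding and the factorial $(\gamma!)^{\sigma}$ gained at each application of $\partial_{t}$ is simply to propagate the characterization through the periodic variable, as in \cite{AC}.
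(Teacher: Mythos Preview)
The paper does not actually prove this theorem: its entire proof is a citation to \cite[Theorem 2.4]{AC}. Your proposal is therefore not competing with any argument in the present paper but rather reconstructing the proof from the earlier reference.

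Your sketch is correct and is precisely the argument one expects from \cite{AC}, which in turn adapts \cite{GPR} to the time-periodic setting: use the $L^{2}$-based norm \eqref{secondnorm}, convert $P^{M}$ into $\lambda_{j}^{M}$ via self-adjointness, pass from $L^{2}(\T)$ to $C^{0}(\T)$ by the one-dimensional Sobolev embedding, and then run the standard Stirling optimization in $M$ (for necessity) and the Gamma-function estimate on the $j$-sum (for sufficiency). The exponent bookkeeping you flag as the ``main technical obstacle'' is indeed the only place where care is needed, and you handle it correctly: $(M!)^{m\mu}|\lambda_{j}|^{-M}$ optimized over $M$ gives $\exp(-c|\lambda_{j}|^{1/(m\mu)})$, and Weyl then converts $|\lambda_{j}|^{1/(m\mu)}$ into $j^{1/(2n\mu)}$.

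One minor slip: in the sufficiency direction the integrand should be $r^{mM/n}$ rather than $r^{2mM/n}$, since $|\lambda_{j}|^{2M}\leq C_{5}^{2M}j^{mM/n}$. This is harmless: after the substitution $r=s^{2n\mu}$ the resulting Gamma function is $\Gamma(2Mm\mu+O(1))$, which by Stirling is bounded by $C^{2M}(M!)^{2m\mu}$ exactly as you claim, so the conclusion is unaffected.
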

\begin{proof}
	See	\cite[Theorem 2.4]{AC}.
\end{proof}

\begin{proposition}
	Let $\{u_j\}_{j \in \N} \subset \mathcal{G}^{\sigma}(\T)$ be 
	a sequence such that  for any $\epsilon>0$, there exists $C_{\epsilon}>0$ such that
	\begin{equation*}
		\sup_{t \in \T} | u_j(t)|\leq C_{\epsilon} 
		\exp\left(\epsilon j^{\frac{1}{2n\mu}}\right), \ \forall j \in \N.
	\end{equation*}
	
	Then, 
	\begin{equation*}
		u(t,x) = \sum_{j \in \N} u_j(t) \varphi_j(x)
	\end{equation*}
	belongs to $\mathcal{S}_{\sigma,\mu}'$ and
	$$
	\langle  u_j \, , \, \psi(t) \rangle  = \langle  u \, , \, \psi(t)\varphi_j(x) \rangle, \ \forall \psi \in  \mathcal{G}^{\sigma}(\T).
	$$
	
	We use the notation $\{u_{j}\}  \rightsquigarrow u \in  \mathcal{S}_{\sigma,\mu}$.			
\end{proposition}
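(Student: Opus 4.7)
The plan is to define the candidate ultradistribution by pairing the formal series against test functions, to verify absolute convergence and continuity of this pairing, and finally to check the formula identifying the $u_j$ as the Fourier coefficients of $u$.

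First, given any $\psi \in \mathcal{S}_{\sigma,\mu}$, I would invoke the previous characterization theorem to expand $\psi(t,x)=\sum_{j}\psi_j(t)\varphi_j(x)$ where $\psi_j(t)=\int_{\R^n}\psi(t,x)\varphi_j(x)\,dx$ satisfies
$$
\sup_{t\in\T}|\psi_j(t)|\leq C_0\exp\bigl(-\epsilon_0 j^{1/(2n\mu)}\bigr)
$$
for suitable positive constants $C_0,\epsilon_0$. Applying the hypothesis on $\{u_j\}$ with the specific choice $\epsilon=\epsilon_0/2$, one obtains $|u_j(t)\psi_j(t)|\leq C_0 C_{\epsilon_0/2}\exp(-\tfrac{\epsilon_0}{2}j^{1/(2n\mu)})$, which is summable in $j$. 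Hence the pairing
$$
\langle u,\psi\rangle:=\sum_{j\in\N}\int_{\T}u_j(t)\psi_j(t)\,dt
$$
is well-defined by absolute convergence, and the partial sums $S_N(t,x)=\sum_{j\leq N}u_j(t)\varphi_j(x)$ converge in $\mathcal{S}_{\sigma,\mu}'$ toward $u$.

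Second, to establish continuity of the linear form $\psi\mapsto\langle u,\psi\rangle$, it suffices, since $\mathcal{S}_{\sigma,\mu}$ carries the inductive limit topology, to verify continuity on each Banach space $\mathcal{S}_{\sigma,\mu,C}$. A careful inspection of the proof of the characterization (Theorem 2.4 in \cite{AC}) shows that the constants $C_0,\epsilon_0$ entering the decay estimate for $\psi_j$ can be taken to depend only on $C$ and linearly on $\|\psi\|_{\sigma,\mu,C}$. Combined with the growth control on $u_j$, this produces a bound
$$
|\langle u,\psi\rangle|\leq K_C\,\|\psi\|_{\sigma,\mu,C},
$$
which gives the required continuity and hence $u\in\mathcal{S}_{\sigma,\mu}'$.

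Third, to prove the identity $\langle u_j,\psi\rangle=\langle u,\psi(t)\varphi_j(x)\rangle$ for $\psi\in\mathcal{G}^{\sigma}(\T)$, I note that $\varphi_j\in\mathcal{S}^{1/2}_{1/2}(\R^n)\subset\mathcal{S}^{\mu}_{\mu}(\R^n)$ since $\mu\geq 1/2$, so the tensor product $\psi(t)\varphi_j(x)$ lies in $\mathcal{S}_{\sigma,\mu}$. By orthonormality of $\{\varphi_k\}$ in $L^2(\R^n)$, its eigenfunction expansion has $\psi(t)$ as the $j$-th coefficient and zero everywhere else; inserting this into the definition of $\langle u,\cdot\rangle$ yields $\int_{\T}u_j(t)\psi(t)\,dt=\langle u_j,\psi\rangle$, as desired.

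The main obstacle in this argument is the continuity step: one must verify that the constants $C_0,\epsilon_0$ governing the decay of the Fourier coefficients $\psi_j$ depend in a controlled, quantitative way on the seminorm $\|\psi\|_{\sigma,\mu,C}$, so that the pairing is bounded on each step of the inductive limit. Once this quantitative refinement of the characterization theorem is in place, the rest of the proof is a routine application of dominated convergence and orthonormality.
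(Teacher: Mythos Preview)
Your argument is correct and follows the natural route: expand the test function in eigenfunctions, use the exponential decay of its coefficients against the sub-exponential growth of the $u_j$ to get absolute convergence, then bootstrap this estimate to continuity on each step $\mathcal{S}_{\sigma,\mu,C}$ of the inductive limit, and finally read off the Fourier-coefficient identity from orthonormality.

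The paper itself does not prove this proposition; it simply refers to \cite[Lemma 2.7 and Theorem 2.9]{AC}. So there is no in-paper argument to compare against, but what you have written is essentially the standard proof one expects to find in that reference. Your identification of the one nontrivial point---that the constants $C_0,\epsilon_0$ in the decay estimate for $\psi_j$ must depend only on $C$ and linearly on $\|\psi\|_{\sigma,\mu,C}$---is exactly right, and this is indeed what makes the continuity step work. Once that quantitative form of the characterization is available (and it is, by tracking constants through the proof of Theorem~2.4 in \cite{AC}), the remainder is routine.
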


\begin{proof}
	See	\cite[Lemma 2.7 and Theorem 2.9]{AC}.
\end{proof}

\subsection{Discretization of equation $Lu=f$ and global hypoellipticity}

In this Subsection, we apply Fourier expansions in the equation $Lu=f$ and recall the results on global hypoellipticity proved in \cite{AC}. To do this,   consider the space

$$
\mathscr{U}_\mu = \bigcup_{\sigma > 1}\mathcal{S}_{\sigma, \mu}',
$$ 
and let  $u \in \mathscr{U}_\mu$ be a solution of equation $Lu = f\in \mathcal{S}_{\sigma,\mu}$. By using 
\begin{equation*}
	u(t,x) = \sum_{j \in \N} u_j(t) \varphi_j(x)
	\ \textrm{ and } \
	f(t,x) = \sum_{j \in \N} f_j(t) \varphi_j(x),
\end{equation*}
we get that $Lu=f$ if and only if 
\begin{equation}\label{diffe-equations}
	\partial_t u_j(t) + i \lambda_j c(t) u_j(t) = if_j(t), \ t \in \T, \ j \in \N.
\end{equation}
The last equations can be solved by elementary methods by
\begin{equation}\label{fator_integrante}
	u_j(t) = \xi_j\exp\left( -i \lambda_j\int_{0}^{t} c(r) dr \right) +  i\int_{0}^{t}\exp\left(i\lambda_j \int_{t}^{s}c(r)dr \right) f_j(s)ds,
\end{equation}
for some $\xi_j \in \C.$ From the ellipticity of equation \eqref{diffe-equations} we get
$u_j \in\mathcal{G}^{\sigma}(\mathbb{T}),$ for all $j \in \N$. 

Now, let $\mathcal{Z}$ the set defined in \eqref{Z} and put
$c_0 = a_0 + ib_0$.  By the periodicity condition $u_j(0) = u_j(2\pi)$ we have the following:

\vskip0.2cm
\begin{lemma}\label{lemma-comp-con}
	If $j \in \mathcal{Z}$ and $Lu = f\in \mathcal{S}_{\sigma,\mu}$, then 
	\begin{equation}\label{admissible_condition}
		\int_{0}^{2\pi}\exp\left(i\lambda_j \int_{0}^{t}c(s)ds\right) f_j(t)dt = 0.
	\end{equation}
	In particular,  
	\begin{equation}\label{Sol-6}
		u_j(t) = \int_{0}^{t}\exp\left(i\lambda_j \int_{t}^{s}c(r)dr \right) f_j(s)ds
	\end{equation}
	is a solution of \eqref{diffe-equations}.
\end{lemma}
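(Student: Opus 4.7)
The plan is to exploit the explicit solution formula \eqref{fator_integrante} and impose the periodicity condition $u_j(0)=u_j(2\pi)$, using the arithmetic assumption $\lambda_j c_0 \in \Z$ in a critical way.

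First I would evaluate \eqref{fator_integrante} at $t=0$ and $t=2\pi$. At $t=0$ both integrals vanish and the exponential prefactor is $1$, giving $u_j(0)=\xi_j$. At $t=2\pi$ the prefactor becomes $\exp\bigl(-i\lambda_j \int_0^{2\pi} c(r)\,dr\bigr)=\exp(-2\pi i \lambda_j c_0)$, which by hypothesis $j \in \mathcal{Z}$ equals $1$. Thus the $\xi_j$-terms on both sides agree and cancel, and the periodicity condition $u_j(0)=u_j(2\pi)$ reduces to
\begin{equation*}
0 = i\int_{0}^{2\pi}\exp\left(i\lambda_j \int_{2\pi}^{s} c(r)\,dr\right) f_j(s)\,ds.
\end{equation*}

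Next I would rewrite the inner integral as $\int_{2\pi}^{s} c(r)\,dr = \int_0^{s} c(r)\,dr - 2\pi c_0$. Since $\lambda_j c_0 \in \Z$, the factor $\exp(-2\pi i \lambda_j c_0)$ equals $1$ and can be dropped, yielding exactly \eqref{admissible_condition}.

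For the second assertion, I would set $\xi_j=0$ in \eqref{fator_integrante} to obtain the candidate \eqref{Sol-6}, and verify directly by differentiating under the integral sign (Leibniz rule) that it solves \eqref{diffe-equations}; the boundary term at $s=t$ produces $f_j(t)$ while the $t$-derivative of the exponential contributes $-i\lambda_j c(t)$ times the integral itself, so the ODE is fulfilled. Periodicity follows because the compatibility condition \eqref{admissible_condition} already established says precisely that $u_j(2\pi)=u_j(0)=0$ under the same change of variables as above. The main (very mild) obstacle is simply tracking the change-of-variable $\int_{2\pi}^{s} = \int_{0}^{s} - \int_{0}^{2\pi}$ correctly; once this is done the conclusion is immediate from $\lambda_j c_0 \in \Z$.
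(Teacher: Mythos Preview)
Your proposal is correct and follows exactly the approach the paper indicates: the paper simply writes ``By the periodicity condition $u_j(0)=u_j(2\pi)$ we have the following'' and states the lemma without further detail, and your argument fills in precisely those routine computations (evaluating \eqref{fator_integrante} at the endpoints, using $\lambda_j c_0\in\Z$ to collapse the exponential factors, and then specializing $\xi_j=0$). There is nothing to add; just be careful that the boundary term in the Leibniz differentiation yields $i f_j(t)$ rather than $f_j(t)$, matching the right-hand side of \eqref{diffe-equations}.
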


If $j\notin \mathcal{Z}$ equations \eqref{diffe-equations} have a unique solution, which can be written in the following equivalent two ways:
\begin{align}\label{Solu-1}
	u_j(t) = \frac{i}{1 - e^{-  2 \pi i\lambda_j c_0}} \int_{0}^{2\pi}\exp\left(-i\lambda_j\int_{t-s}^{t}c(r) \, dr\right) f_j(t-s)ds, 
\end{align}
or
\begin{align}\label{Solu-2}
	u_j(t) = \frac{i}{e^{ 2 \pi i \lambda_j c_0} - 1} \int_{0}^{2\pi}\exp\left(i\lambda_j\int_{t}^{t+s}c(r) \, dr\right) f_j(t+s)ds. 
\end{align}

Using formulas \eqref{Solu-1} and \eqref{Solu-2} in \cite{AC} we proved necessary and sufficient conditions for global hypoellipticity. Recall that a differential operator $\mathcal{Q}$ on $\T \times \R^n$  is said to be $\mathcal{S}_{\mu}$-globally hypoelliptic  if conditions $u \in \mathscr{U}_\mu$ and  $\mathcal{Q}u \in \mathscr{F}_{\mu}$ imply $u \in \mathscr{F}_\mu$.
Also in this case Diophantine conditions appear naturally to control the behavior of the sequences  
\begin{equation}\label{Theta_j-Gamma_j}
	\Theta_j = |1 - e^{-  2 \pi i c_0\lambda_j}|^{-1}
	\ \textrm{ and} \
	\Gamma_j = |e^{2 \pi i c_0\lambda_j} - 1|^{-1}.
\end{equation}
Namely, inspired by reference \cite{BDG18}, let us set the following condition for a complex number $\omega$:
\begin{description}	\item[($\mathscr{B}$)] for every $\epsilon>0$ there exists $C_{\epsilon}>0$ such that
	\begin{equation*}
		| \tau - \omega \lambda_j| \geqslant C_{\epsilon} \exp \left(-\epsilon j^{\frac{1}{2n\mu}}\right),
	\end{equation*}
	for all $(\tau, j) \in \Z\times \N$.
	
\end{description}

Notice that $(\mathscr{B})$ implies condition $(\mathscr{A})$ appearing in Theorem \ref{main_theorem}.	
The behavior of sequences  in \eqref{Theta_j-Gamma_j} and 
the condition $(\mathscr{B})$ can be connected in view of the following Lemma whose proof can be obtained by a slight modification of the arguments in the proof of  Lemma 2.5 in \cite{BDG18}. We leave the details to the reader. 

\vskip0.2cm
\begin{lemma}\label{lt2} Consider $\eta \geq1$  and $\omega \in \C.$  The following two conditions are equivalent:
	\begin{itemize}
		\item[i)] for each $\epsilon>0$ there exists a positive constant $C_\epsilon$ such that
		$$
		|\tau - \omega\lambda_j|\geqslant C_\epsilon\exp\{-\epsilon(|\tau|+j)^{1/\eta}\}, \ \forall  \tau \in \mathbb{Z},\,  \forall j \in \mathbb{N}.
		$$
		
		\item[ii)] for each $\delta>0$ there exists a positive constant $C_\delta$ such that 
		$$
		|1-e^{2\pi i \omega\lambda_j}|\geqslant C_\delta\exp\{-\delta j^{1/\eta}\}, \ \forall  j \in\mathbb{N}.
		$$
		
	\end{itemize}
\end{lemma}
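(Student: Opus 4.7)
The plan is to establish both implications by reducing first to the case $\omega \in \R$ via the Weyl asymptotic \eqref{weyl}, and then exploiting the classical equivalence between $|1-e^{2\pi iy}|$ and the distance of $y$ to $\Z$.

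First, I would dispose of the case $\Im\omega\neq 0$. Since $|\lambda_j|\to\infty$ by \eqref{weyl}, we have $|\tau-\omega\lambda_j|\geq|\Im\omega|\,|\lambda_j|\to\infty$ uniformly in $\tau\in\Z$, which at once gives (i). Moreover,
\begin{equation*}
|1-e^{2\pi i\omega\lambda_j}|=|1-e^{-2\pi(\Im\omega)\lambda_j}e^{2\pi i(\Re\omega)\lambda_j}|
\end{equation*}
is bounded below by a positive constant for $j$ large, regardless of the sign of $(\Im\omega)\lambda_j$; hence (ii) also holds. Both conditions are thus automatic and the equivalence is trivial in this case.

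Assume then $\omega=a\in\R$. The main tool is the elementary two-sided bound
\begin{equation*}
  4\,\mathrm{dist}(y,\Z)\leq|1-e^{2\pi iy}|\leq 2\pi\,\mathrm{dist}(y,\Z),\qquad y\in\R,
\end{equation*}
with $\mathrm{dist}(y,\Z):=\min_{\tau\in\Z}|y-\tau|$, which follows immediately from $|1-e^{2\pi iy}|=2|\sin\pi y|$ together with $\tfrac{2}{\pi}|x|\leq|\sin x|\leq|x|$ for $|x|\leq\pi/2$. For the direction (ii) $\Rightarrow$ (i) this already suffices: for every $\tau\in\Z$ and $j\in\N$,
\begin{equation*}
  |\tau-a\lambda_j|\geq\mathrm{dist}(a\lambda_j,\Z)\geq(2\pi)^{-1}|1-e^{2\pi ia\lambda_j}|\geq(2\pi)^{-1}C_\delta\,e^{-\delta j^{1/\eta}}\geq(2\pi)^{-1}C_\delta\,e^{-\delta(|\tau|+j)^{1/\eta}},
\end{equation*}
giving (i) with $\epsilon=\delta$.

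For the converse (i) $\Rightarrow$ (ii), let $\tau_j\in\Z$ realize $\mathrm{dist}(a\lambda_j,\Z)=|a\lambda_j-\tau_j|$. Applying (i) at $\tau=\tau_j$ and combining with the lower bound $|1-e^{2\pi ia\lambda_j}|\geq 4|a\lambda_j-\tau_j|$ yields
\begin{equation*}
  |1-e^{2\pi ia\lambda_j}|\geq 4C_\epsilon\,e^{-\epsilon(|\tau_j|+j)^{1/\eta}};
\end{equation*}
then using the Weyl asymptotic \eqref{weyl} to bound $|\tau_j|\leq|a|\,|\lambda_j|+1$, so that $(|\tau_j|+j)^{1/\eta}$ is controlled by a constant multiple of $j^{1/\eta}$, one obtains (ii) for any prescribed $\delta>0$ by choosing $\epsilon>0$ sufficiently small to absorb the multiplicative constants arising in the exponent. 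The key delicate point is precisely this quantitative passage from the $(|\tau|+j)^{1/\eta}$ bound in (i) to the pure $j^{1/\eta}$ bound in (ii): it is where the specific growth rate of $\lambda_j$ enters, and the arbitrariness of $\epsilon$ in (i) is what allows the absorption to succeed; the remaining ingredients are elementary.
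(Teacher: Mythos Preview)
The paper gives no self-contained proof here; it simply refers to Lemma~2.5 in \cite{BDG18} and leaves the adaptation to the reader. Your outline---dispose of $\Im\omega\neq 0$, then use the two-sided estimate $|1-e^{2\pi iy}|\asymp\mathrm{dist}(y,\Z)$ together with the nearest integer $\tau_j$ to $a\lambda_j$---is exactly the standard route and is undoubtedly what the paper has in mind.

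One step, however, is not justified in the generality the paper allows. You claim that, via the Weyl asymptotic, $(|\tau_j|+j)^{1/\eta}$ is controlled by a constant multiple of $j^{1/\eta}$. From $|\tau_j|\leq|a|\,|\lambda_j|+1$ and $|\lambda_j|\sim\rho\,j^{m/2n}$ one only obtains $|\tau_j|+j\leq C\,j^{\max(1,\,m/2n)}$, so the bound $(|\tau_j|+j)^{1/\eta}\leq K\,j^{1/\eta}$ holds precisely when $m\leq 2n$. If $m>2n$ (for instance $n=1$, $m=4$, which the hypotheses of the paper permit), the resulting exponent $m/(2n\eta)$ strictly exceeds $1/\eta$, and no choice of small $\epsilon$ in (i) can absorb $\epsilon(|\tau_j|+j)^{1/\eta}$ into a prescribed $\delta\,j^{1/\eta}$. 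Thus your argument for (i)$\Rightarrow$(ii) has a genuine gap in that regime; you should either restrict explicitly to $m\leq 2n$, supply a separate argument for $m>2n$, or flag that the lemma as stated (borrowed from \cite{BDG18}, where effectively $\lambda_j\sim j$) may itself require this restriction.
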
 

We can now recall the global hypoellipticity results proved in \cite{AC} concerning the case when the coefficient $c(t)$ is constant or depending on $t$ respectively.

\vskip0.2cm
\begin{theorem}\label{GH-time-ind}
	Operator
	\begin{equation*}
		\mathcal{L} = D_t + (\alpha + i\beta)P(x,D_x),  \ \alpha, \beta \in \R,
	\end{equation*}
	is $\mathcal{S}_{\mu}$-globally hypoelliptic if and only if one of the following conditions holds:
	\begin{enumerate}
		\item [(a)] $\beta \neq 0$;
		
		\item [(b)] $\beta =0$ and $\alpha$ satisfies condition 
		($\mathscr{B}$), or equivalently, for every $\epsilon>0$ there exists $C_{\epsilon}>0$ such that
		\begin{equation*}
			\inf_{\tau \in \Z} | \tau - \alpha\lambda_j| \geqslant C_{\epsilon} \exp \left(-\epsilon j^{\frac{1}{2n\mu}}\right), \ \textrm{ as }  \ j \to \infty.
		\end{equation*}
	\end{enumerate}
\end{theorem}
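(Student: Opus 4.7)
The plan is to reduce the problem to an algebraic relation on Fourier--eigenfunction coefficients and then split the analysis according to whether the symbol is elliptic or merely non-resonant. Expanding $u = \sum_{j \in \N} u_j(t)\varphi_j(x)$ and then $u_j(t) = \sum_{\tau \in \Z} \hat{u}_j(\tau)e^{i\tau t}$ (and similarly for $f$), the equation $\mathcal{L}u=f$ becomes
\begin{equation*}
(\tau + (\alpha+i\beta)\lambda_j)\hat{u}_j(\tau) = \hat{f}_j(\tau), \qquad (\tau,j) \in \Z \times \N.
\end{equation*}
By the characterization of $\mathcal{S}_{\sigma,\mu}$ and $\mathcal{S}_{\sigma,\mu}'$ recalled in Section \ref{sec-notations}, membership in $\mathscr{F}_\mu$ is equivalent to exponential decay of $\hat{u}_j(\tau)$ in $j$ of the form $\exp(-\epsilon j^{1/(2n\mu)})$ together with Gevrey-$\sigma$ decay in $\tau$, while $\mathscr{U}_\mu$ allows subexponential growth. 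Hypoellipticity is therefore equivalent to a division lemma for the symbol $\tau + (\alpha+i\beta)\lambda_j$.

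For the sufficiency of (a), when $\beta \neq 0$ one has $|\tau + (\alpha+i\beta)\lambda_j| \geq |\beta||\lambda_j|$, and the Weyl asymptotics $|\lambda_j|\sim \rho j^{m/2n}$ give a polynomial lower bound that dominates any subexponential factor. Dividing the assumed bounds on $\hat{f}_j(\tau)$ by the symbol immediately yields the required decay of $\hat{u}_j(\tau)$ in both $\tau$ and $j$. For the sufficiency of (b), condition $(\mathscr{B})$ gives $|\tau - \alpha\lambda_j| \geq C_\epsilon \exp(-\epsilon j^{1/(2n\mu)})$ whenever the expression is nonzero; choosing $\epsilon$ strictly smaller than the decay rate of $\hat f_j(\tau)$ one recovers exponential decay for $\hat u_j(\tau)$ at a slightly degraded rate. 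The only subtlety is the handling of the resonant modes where $\tau - \alpha\lambda_j = 0$: on these modes $\hat u_j(\tau)$ is an arbitrary constant coming from the kernel, but since $u\in \mathscr{U}_\mu$ is given a priori, these coefficients already satisfy subexponential bounds, and there can be at most finitely many such modes per $j$ (controlled by the Weyl distribution of $\lambda_j$), so they do not obstruct $u\in\mathscr{F}_\mu$.

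For the necessity, case (a) requires nothing to check since $\beta\ne 0$ suffices; one only needs to show that if $\beta=0$ and $(\mathscr{B})$ fails, then $\mathcal{L}$ is not globally hypoelliptic. Negating $(\mathscr{B})$ produces $\epsilon_0>0$ and sequences $\{(\tau_k,j_k)\} \subset \Z\times \N$ with $0 < |\tau_k - \alpha\lambda_{j_k}| < k^{-1}\exp(-\epsilon_0 j_k^{1/(2n\mu)})$; after passing to a subsequence we may assume $j_k\uparrow\infty$. I would then build the counterexample
\begin{equation*}
f(t,x) = \sum_{k} \exp\!\bigl(-\tfrac{\epsilon_0}{2} j_k^{1/(2n\mu)}\bigr) e^{i\tau_k t}\varphi_{j_k}(x),
\end{equation*}
verify that $f\in\mathcal{S}_{\sigma,\mu}$ for any $\sigma>1$ using $|\tau_k|\lesssim j_k^{m/2n}$ and Stirling, and take $u$ to be the unique formal solution with $\hat{u}_{j_k}(\tau_k) = \hat{f}_{j_k}(\tau_k)/(\tau_k - \alpha\lambda_{j_k})$, all other coefficients zero. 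Then $|\hat{u}_{j_k}(\tau_k)| \geq k \exp(\tfrac{\epsilon_0}{2}j_k^{1/(2n\mu)})$, which has subexponential growth in $j_k$ so $u\in\mathscr{U}_\mu$, but precludes any exponential decay in $j$, hence $u\notin\mathscr{F}_\mu$.

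The main obstacle I anticipate is the bookkeeping in the necessity argument: the counterexample must simultaneously lie in some $\mathcal{S}_{\sigma,\mu}$ (forcing a Gevrey-$\sigma$ bound on the $\tau$-dependence that competes with the polynomial size of $\tau_k$), while the resulting $u$ must stay inside $\mathscr{U}_\mu$ (forcing the growth rate in $j$ to be subexponential at any prescribed level). Calibrating $\epsilon_0$, the decay rate of the coefficients of $f$, and the choice of $\sigma$ against the Weyl asymptotics so that both constraints hold on the same sequence is where Lemma \ref{lt2} will be useful, because it lets one pass freely between the formulation of $(\mathscr{B})$ in terms of $\tau - \alpha\lambda_j$ and the equivalent one in terms of $|1-e^{2\pi i\alpha\lambda_j}|$, which is the quantity that appears naturally after Fourier transforming in $t$.
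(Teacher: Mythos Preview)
The paper does not prove this result here; it simply cites \cite[Theorem 3.6]{AC}. Your Fourier--coefficient reduction and division argument is the standard route and almost certainly matches what is done there. The sufficiency parts are correct in outline; note only that under $(\mathscr{B})$ as written (the bound holds for \emph{all} $(\tau,j)$) there are no resonant modes whatsoever, so your remark about handling them is unnecessary---and, as stated, it is also wrong: if there were infinitely many $j$ carrying a resonant mode, the kernel would already contain elements of $\mathscr{U}_\mu\setminus\mathscr{F}_\mu$, so the fact that there is ``at most one per $j$'' would not save you.

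The necessity argument has a genuine gap. You build $f\in\mathscr{F}_\mu$ first and then divide, obtaining $|\hat u_{j_k}(\tau_k)|\geq k\exp\bigl(\tfrac{\epsilon_0}{2}j_k^{1/(2n\mu)}\bigr)$, which you call ``subexponential growth'' and use to conclude $u\in\mathscr{U}_\mu$. That is backwards: membership in any $\mathcal{S}'_{\sigma,\mu}$ requires the coefficients to be bounded by $C_\epsilon\exp(\epsilon j^{1/(2n\mu)})$ for \emph{every} $\epsilon>0$ (this is dual to the ``there exists $\epsilon>0$'' decay defining $\mathcal{S}_{\sigma,\mu}$), and growth at the fixed positive rate $\epsilon_0/2$ violates this for all $\epsilon<\epsilon_0/2$. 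Worse, the negation of $(\mathscr{B})$ gives only an \emph{upper} bound on $|\tau_k-\alpha\lambda_{j_k}|$, so you have no upper control on $|\hat u_{j_k}(\tau_k)|$ at all. The correct construction goes the other way: set $\hat u_{j_k}(\tau_k)=1$ and all other coefficients zero, so that $u_{j_k}(t)=e^{i\tau_k t}$ is bounded and $u\in\mathscr{U}_\mu\setminus\mathscr{F}_\mu$ immediately; then $\mathcal{L}u=f$ has $\hat f_{j_k}(\tau_k)=\tau_k-\alpha\lambda_{j_k}$, which decays like $\exp(-\epsilon_0 j_k^{1/(2n\mu)})$ by the failure of $(\mathscr{B})$, and the bound $|\tau_k|\lesssim j_k^{m/2n}$ together with Lemma~\ref{lemma-exp-j} gives $f\in\mathscr{F}_\mu$. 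This is the calibration issue you anticipated, but you placed it on the wrong side of the equation.
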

\begin{proof}
	See  \cite[Theorem 3.6]{AC}.
\end{proof}

\begin{theorem}\label{GH-time-dep}
	Operator
	$
	L = D_t + c(t)P(x,D_x)
	$
	is $\mathcal{S}_{\mu}$-globally hypoelliptic if and only if one of the following conditions holds:
	\begin{enumerate}
		\item [(a)] $b$ is not identically zero and does not change sign;
		
		\item [(b)] $b \equiv 0$ and $a_0$ satisfies condition 
		($\mathscr{B}$).
	\end{enumerate}
	
\end{theorem}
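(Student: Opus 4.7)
The plan is to exploit the eigenfunction expansion of Section~\ref{sec-notations} to discretize $Lu = f$ into the scalar ODEs \eqref{diffe-equations}. For $j \notin \mathcal{Z}$ the unique $2\pi$-periodic solution is given by either \eqref{Solu-1} or \eqref{Solu-2}, so $\mathcal{S}_\mu$-global hypoellipticity reduces to proving that the map $\{f_j\} \mapsto \{u_j\}$ defined by these formulas preserves the subexponential decay estimate \eqref{deccoeff}. In both directions of the theorem the analysis thus hinges on two ingredients: the modulus of the oscillatory integrand (which depends on the signs of $b$ and of $\lambda_j$) and the size of the denominators $\Theta_j, \Gamma_j$ of \eqref{Theta_j-Gamma_j}.

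For the sufficiency in case (a), assume without loss of generality $b \geq 0$, $b \not\equiv 0$, so that $b_0 > 0$ and $\mathcal{Z}$ contains only indices with $\lambda_j = 0$, hence is finite. For $\lambda_j > 0$ large I would select \eqref{Solu-2}: the exponential factor has modulus $\exp(-\lambda_j \int_t^{t+s} b(r)\,dr) \leq 1$, and $|e^{2\pi i\lambda_j c_0} - 1| \geq 1 - e^{-2\pi\lambda_j b_0} \to 1$, so $\Gamma_j$ is uniformly bounded; for $\lambda_j < 0$ I would swap to \eqref{Solu-1} and argue symmetrically. The resulting estimate $\sup_t |u_j(t)| \leq C \sup_t |f_j(t)|$ directly transfers the decay of $f_j$ to $u_j$. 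In case (b) the coefficient $c$ is real, so the integrand exponentials have modulus one; Lemma~\ref{lt2} then shows that condition $(\mathscr{B})$ is equivalent to $\Theta_j, \Gamma_j \leq C_\epsilon\exp(\epsilon j^{1/(2n\mu)})$ for every $\epsilon > 0$, which is absorbed by the decay of $f_j$. In both cases, estimates on $\partial_t^k u_j$ follow by iterating the ODE $\partial_t u_j = -i\lambda_j c(t) u_j + i f_j$ and using the Gevrey regularity of $c$, producing the required $(k!)^\sigma$ growth in $k$ with a factor growing only polynomially in $\lambda_j$, which is again absorbed in the $j$-decay.

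For necessity I would argue the contrapositive. If $b \equiv 0$ and $(\mathscr{B})$ fails, pick a sequence $(\tau_k, j_k) \in \mathbb{Z} \times \mathbb{N}$ for which $|\tau_k - a_0 \lambda_{j_k}|$ shrinks faster than $\exp(-\epsilon j_k^{1/(2n\mu)})$ for some $\epsilon > 0$; Lemma~\ref{lt2} then forces $\Theta_{j_k}$ to grow faster than any admissible subexponential bound, and a construction with $f_{j_k}(t) \propto e^{i\tau_k t}$ (and zero elsewhere) yields $f \in \mathscr{F}_\mu$ whose associated solution $u_{j_k}$ violates \eqref{deccoeff}. If $b$ changes sign, two subcases appear: if $\mathcal{Z}$ is infinite (which forces $b_0 = 0$), one builds a homogeneous solution with $u_j(t) = \xi_j \exp(-i\lambda_j\int_0^t c(r)\,dr)$ for $j \in \mathcal{Z}$ and $|\xi_j|$ bounded but not subexponentially decaying, giving $u \in \mathscr{U}_\mu \setminus \mathscr{F}_\mu$ with $Lu = 0$; if $\mathcal{Z}$ is finite, one exploits the non-monotonicity of $B(t) = \int_0^t (b(s) - b_0)\,ds$ to design $f$ whose corresponding $u_j$ concentrates its mass at a point $t^\ast$ of maximum of $B$, where the factor $e^{\lambda_j B(t^\ast)}$ is genuinely amplifying. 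I expect this last subcase to be the main obstacle: one must carefully balance the decay of the sequence $\{f_j\}$ against the exponential amplification at $t^\ast$ so that $f$ remains in $\mathscr{F}_\mu$ while $u$ does not, mirroring in the Gelfand-Shilov framework the Treves-type constructions employed in \cite{BDG18} for the torus case.
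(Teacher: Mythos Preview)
The paper does not actually prove this statement here; its proof reads in full ``See \cite[Theorem 3.11]{AC}''. So there is no in-paper argument to compare against, and your sketch is effectively a reconstruction of what \cite{AC} presumably does. The overall architecture you describe---discretization into the ODEs \eqref{diffe-equations}, sign-based selection between \eqref{Solu-1} and \eqref{Solu-2} so that the exponential integrand has modulus at most one, control of the denominators via Lemma~\ref{lt2}, and explicit counterexamples for necessity---is the standard route and matches the tools the present paper imports from \cite{AC}.

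There is, however, one genuine slip in your necessity argument. In the subcase ``$b$ changes sign, $\mathcal{Z}$ infinite'' you propose homogeneous solutions $u_j(t)=\xi_j\exp\bigl(-i\lambda_j\int_0^t c(r)\,dr\bigr)$ with $|\xi_j|$ \emph{bounded but not subexponentially decaying}. This choice need not produce $u\in\mathscr{U}_\mu$. Writing $B(t)=\int_0^t b(r)\,dr$ (periodic since $b_0=0$), one has $|u_j(t)|=|\xi_j|\,e^{\lambda_j B(t)}$; if $M:=\max_t B(t)>0$ and $\lambda_j>0$, then $\sup_t|u_j(t)|=|\xi_j|\,e^{\lambda_j M}\sim|\xi_j|\exp(\rho M\,j^{m/2n})$, which for merely bounded $\xi_j$ grows faster than any bound $C_\epsilon\exp(\epsilon j^{1/2n\mu})$ permits (recall $m\mu\geq 1$). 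The fix is to normalize at the extremum: take $\xi_j=e^{-\lambda_j M}$ for $\lambda_j>0$ (respectively $\xi_j=e^{-\lambda_j m}$, $m=\min_t B(t)$, for $\lambda_j<0$), so that $\sup_t|u_j(t)|=1$. It is this supremum---not $|\xi_j|$---that must be bounded and not subexponentially decaying. With this correction the homogeneous construction yields $u\in\mathscr{U}_\mu\setminus\mathscr{F}_\mu$ with $Lu=0$ as you intend. The remaining parts of your plan (including the acknowledged harder subcase ``$\mathcal{Z}$ finite, $b$ changes sign'') are correctly identified and point to the right techniques.
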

\begin{proof}
	See  \cite[Theorem 3.11]{AC}.
\end{proof}

\section{Global solvability \label{sec-solvability}}

In this section, we start  the study of global solvability and make some preliminary steps to the proof of Theorem \ref{main_theorem}. First, we observe that in view of Lemma \ref{lemma-comp-con} it is necessary to introduce a class of admissible functions of operator $L$, namely, the space $\mathscr{E}_{L,\mu}$ of all $f \in \mathcal{F}_{\mu}$ such that
$$
\int_{0}^{2\pi}\exp\left(i\lambda_j \int_{0}^{t}c(s)ds\right) f_j(t)dt = 0,
$$
whenever $j\in \mathcal{Z}.$ Therefore, we can refine the notion of solvability given in Definition \ref{defglobalsolvability1} as follows.

\vskip0.2cm
\begin{definition}\label{def_GS}
	We say that operator $L$ is $\mathcal{S}_{\mu}$-globally solvable if for every $f\in \mathscr{E}_{L,\mu}$ there exists $u \in \mathscr{F}_{\mu}$ such that $Lu=f$.
\end{definition} 

We observe that the solvability of operator $L$ is strongly connected with properties of its transpose $^tL$, cf. \cite{BDG17}.  We recall that $P$ is self-adjoint, with constant real coefficients, implying $^tP= P$ in view of
$$
^tP u = \overline{P^*(\overline{u})} = \overline{P(\overline{u})} = Pu.
$$

Therefore, 
$$
^tL= - D_t + c(t)P(x,D_x),
$$	
and if  $f = Lu$, for some $u \in \mathscr{F}_{\mu}$, and $v \in \mbox{ker}(^tL)$ we get
$$
\langle v, f \rangle = \langle v, Lu \rangle = \langle ^tL v,u \rangle = 0,
$$
and 
$$
L(\mathcal{S}_{\sigma,\mu}) \subset [\mbox{ker}(^tL)]^{\circ} \doteq \{\phi \in \mathcal{F}_{\mu}: \langle \omega,\phi\rangle  = 0, \ \forall \omega \in \mbox{ker}(^tL) \}
$$

In particular, we may 	characterize $\mbox{ker}(^tL)$ in terms of Fourier coefficients as follows.

\vskip0.2cm
\begin{lemma}\label{charac_kern}
	We have $\omega \in \mbox{ker}(^tL)$ if and only if
	\begin{equation}
		\omega_j(t) = \left\{
		\begin{array}{l}
			0, \ j \notin \mathcal{Z},  \\
			\eta_j \exp\left(i\lambda_{j}\int_{0}^{t}c(s)ds\right), j \in \mathcal{Z},
		\end{array}
		\right.
	\end{equation}
	for some $\eta_j \in \C$. In particular, $\mathscr{E}_{L,\mu} = [\mbox{ker}(^tL)]^{\circ}$.

\end{lemma}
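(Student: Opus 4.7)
The plan is to reduce the PDE $^tL\omega = 0$ to a decoupled family of ODEs via the eigenfunction expansion, in exactly the same spirit as was done for $Lu=f$ in the previous subsection. Writing $\omega(t,x) = \sum_{j \in \N} \omega_j(t)\varphi_j(x)$ and using $P\varphi_j = \lambda_j\varphi_j$ together with the orthonormality of $\{\varphi_j\}$ in $L^2(\R^n)$, the condition $^tL\omega = 0$ is equivalent to
\[
-D_t \omega_j(t) + c(t)\lambda_j\, \omega_j(t) = 0, \qquad j \in \N,
\]
which, since $-D_t = i\partial_t$, rewrites as $\partial_t \omega_j = i\lambda_j c(t)\omega_j$. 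This first-order linear ODE has general solution $\omega_j(t) = \eta_j \exp\bigl(i\lambda_j \int_0^t c(s)\,ds\bigr)$ for some $\eta_j \in \C$. Imposing $2\pi$-periodicity gives $\eta_j = \eta_j\, e^{2\pi i\lambda_j c_0}$, which forces $\eta_j = 0$ whenever $2\pi\lambda_j c_0 \notin 2\pi\Z$, i.e., whenever $j \notin \mathcal{Z}$. This yields the claimed characterization.

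For the polar identity $\mathscr{E}_{L,\mu} = [\ker(^tL)]^\circ$, I would expand the duality pairing in the eigenfunction basis: for $\omega \in \ker(^tL)$ and $f \in \mathscr{F}_\mu$,
\[
\langle \omega, f \rangle = \sum_{j \in \mathcal{Z}} \eta_j \int_0^{2\pi} \exp\!\Bigl(i\lambda_j\!\int_0^t c(s)\,ds\Bigr) f_j(t)\, dt.
\]
Since the coefficients $\eta_j$ indexed by $\mathcal{Z}$ are arbitrary (in particular, one can localize to a single index by choosing $\eta_j = \delta_{jk}$), this pairing vanishes for every $\omega \in \ker(^tL)$ if and only if each of the integrals above vanishes for every $j \in \mathcal{Z}$, which is exactly the defining condition of $\mathscr{E}_{L,\mu}$.

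The main obstacle I anticipate is justifying the termwise action of $^tL$ on the series expansion of $\omega$ and verifying that the resulting scalar coefficients $\omega_j \in (\mathcal{G}^\sigma)'(\T)$ are actually smooth (so that the ODE solution formula makes classical sense). Smoothness follows from ellipticity of the scalar operator $i\partial_t + \lambda_j c(t)$ with Gevrey coefficient $c(t) \in \mathcal{G}^\sigma(\T)$, while the termwise identification of the equation is legitimate because the characterization of $\mathcal{S}'_{\sigma,\mu}$ recalled in Section \ref{sec-notations} allows one to isolate each component $\omega_j$ by testing the identity $^tL\omega = 0$ against $\psi(t)\varphi_j(x)$ with $\psi \in \mathcal{G}^\sigma(\T)$. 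Once these technicalities are in place, the remainder of the argument is a direct integration of a linear ODE together with a routine Parseval-type computation.
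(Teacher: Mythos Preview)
Your proposal is correct and follows essentially the same route as the paper: both reduce $^tL\omega=0$ to the scalar ODEs $\partial_t\omega_j = i\lambda_j c(t)\omega_j$, integrate them, impose periodicity to force $\eta_j=0$ for $j\notin\mathcal{Z}$, and then prove $\mathscr{E}_{L,\mu}=[\ker(^tL)]^\circ$ by expanding the pairing and localizing to a single index $\ell\in\mathcal{Z}$ via $\eta_j=\delta_{j\ell}$. The only cosmetic difference is that the paper writes the ODE integration in integrating-factor form rather than stating the solution directly.
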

\begin{proof}
	Note that $\omega \in \mbox{ker}(^tL)$  if and only if
	$$
	\exp\left(-i\lambda_{j}\int_{0}^{t}c(s)ds\right) \omega_j(t) = \eta_j
	$$
	where $\eta_j \in \C$ satisfies the condition
	\begin{equation*}
		\eta_j \left[1 - \exp\left(i\lambda_j 2 \pi c_0\right)\right]=0,
	\end{equation*}
	since  $\omega_j(t)$ is $2\pi$-periodic.
	If $j \notin \mathcal{Z}$, then $\eta_j = 0$ and $\omega_j(t) \equiv 0$. On the other hand, for  $j \in \mathcal{Z}$, $\eta_j$ can be chosen arbitrarily.

	Now, given $\phi \in  \mathscr{F}_\mu$ and $\omega = \sum_{j \in \N} \omega_j(t) \varphi_j(x) \in \mbox{ker}(^tL)$ we obtain
	\begin{equation} \label{<>}
		<\omega, \phi> = \sum_{k \in \mathcal{Z}} \left\{\eta_k \int_{0}^{2\pi} \exp\left(i\lambda_{k}\int_{0}^{t}c(s)ds\right)\phi_k(t) dt \right\}.
	\end{equation}

	By definition, if $\phi \in \mathscr{E}_{L,\mu}$, then 
	\begin{equation*}\label{cond_phi}
		\int_{0}^{2\pi} \exp\left(i\lambda_{k}\int_{0}^{t}c(s)ds\right)\phi_k(t) dt = 0, \forall k \in \mathcal{Z}.
	\end{equation*}
	which implies $<\omega, \phi> = 0$, then $\phi\in [\mbox{ker}(^tL)]^{\circ}$.
	
	Conversely, if $\phi\in [\mbox{ker}(^tL)]^{\circ}$, then, fixed  $\ell \in \mathcal{Z}$, we can define a function $\omega^{\ell} \in \mbox{ker}(^tL)$ by setting
	$$
	\omega^{\ell}_k(t) = 
	\left\{
	\begin{array}{l}
		0, \ \textrm{ if } \ k \neq \ell, \\
		\exp\left(i\lambda_{\ell}\int_{0}^{t}c(s)ds\right), \ \textrm{ if } \ k=\ell.
	\end{array}
	\right. 
	$$
	
	Hence, from \eqref{<>} we obtain
	$$
	0 = <\omega^{\ell} , \phi>  = \int_{0}^{2\pi} \exp\left(i\lambda_{\ell}\int_{0}^{t}c(s)ds\right)\phi_\ell(t) dt,
	$$
	implying $\phi \in \mathcal{E}_{L,\mu}.$
\end{proof}

As for global hypoellipticity, in order to prove Theorem \ref{main_theorem} we shall treat separately the case when the coefficient $c$ in \eqref{op-intro} is constant and the one when it is time depending. However, first we state the following general fact.

\vskip0.2cm
\begin{proposition}\label{GHimplGS}
	If $L$ is $\mathcal{S}_{\mu}$-globally hypoelliptic, then it is $\mathcal{S}_{\mu}$-globally solvable.
\end{proposition}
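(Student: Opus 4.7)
My plan is to build, for any admissible datum $f \in \mathscr{E}_{L,\mu}$, a solution $u$ of $Lu = f$ at the level of the Fourier-coefficient equations \eqref{diffe-equations}, verify that the resulting $u$ lies in $\mathscr{U}_\mu$, and then let global hypoellipticity do the rest.

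Take $f \in \mathscr{E}_{L,\mu} \cap \mathcal{S}_{\sigma,\mu}$ with Fourier expansion $f = \sum_{j \in \N} f_j(t) \varphi_j(x)$. I would define the coefficients $u_j$ of the candidate solution as follows. For indices $j \in \mathcal{Z}$ I use formula \eqref{Sol-6}: the admissibility condition \eqref{admissible_condition}, which is built into the definition of $\mathscr{E}_{L,\mu}$, is exactly what makes \eqref{Sol-6} produce a $2\pi$-periodic function. For indices $j \notin \mathcal{Z}$ I use formula \eqref{Solu-1} (or equivalently \eqref{Solu-2}), which gives the unique periodic solution of \eqref{diffe-equations}. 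Setting $u(t,x) = \sum_j u_j(t)\varphi_j(x)$, one then has $Lu = f$ provided that the series makes sense in $\mathscr{U}_\mu$.

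The key point is therefore to verify the subexponential growth estimate $\sup_t |u_j(t)| \leq C_\epsilon \exp\bigl(\epsilon\, j^{1/(2n\mu)}\bigr)$ for every $\epsilon > 0$, which is what puts $u$ in $\mathcal{S}'_{\sigma',\mu}$ for some $\sigma'$. Combining the Gevrey decay of $f_j$ from \eqref{deccoeff} with the explicit formulas reduces the problem to estimating the prefactors $\Theta_j$, $\Gamma_j$ from \eqref{Theta_j-Gamma_j} together with the exponential weights $\exp\bigl(\lambda_j \int b\,dr\bigr)$ appearing inside the integrals. Here the global hypoellipticity hypothesis is decisive: by Theorem \ref{GH-time-dep} we are in one of two scenarios. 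If $b \not\equiv 0$ has constant sign, then $b_0 \neq 0$, so $\mathcal{Z}$ contains at most the (finite) set of indices $j$ with $\lambda_j = 0$; for the remaining indices the exponential factor $e^{\pm 2\pi \lambda_j b_0}$ present in $|1 - e^{\mp 2\pi i c_0 \lambda_j}|^{-1}$ cancels precisely the worst-case contribution of $\exp\bigl(\lambda_j \int_{t-s}^t b\,dr\bigr)$ inside the integral of \eqref{Solu-1}, leaving $|u_j(t)|$ bounded by a constant multiple of $\sup_t |f_j(t)|$. If instead $b \equiv 0$ and $a_0$ satisfies $(\mathscr{B})$, then the integrand in each formula has modulus one, and Lemma \ref{lt2} yields the subexponential bound $|1 - e^{2\pi i a_0 \lambda_j}|^{-1} \leq C_\delta \exp\bigl(\delta\, j^{1/(2n\mu)}\bigr)$, which transfers directly to $u_j$.

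In either case $u$ belongs to $\mathscr{U}_\mu$ and satisfies $Lu = f \in \mathscr{F}_\mu$ by construction. Applying the $\mathcal{S}_\mu$-global hypoellipticity hypothesis to $u$ one concludes $u \in \mathscr{F}_\mu$, which proves $\mathcal{S}_\mu$-global solvability. The main obstacle I anticipate is the bookkeeping in the first scenario above: verifying that the interplay between the exponentially large or small prefactor $\Theta_j$ (or $\Gamma_j$) and the integrand weight $\exp\bigl(\lambda_j \int b\,dr\bigr)$ produces a uniform bound in $t$ requires treating separately the two possible signs of $\lambda_j$ and is cleanest when phrased in terms of the primitive $B(t) = \int_0^t b(r)\,dr$.
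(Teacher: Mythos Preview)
Your argument is correct and follows essentially the same construction as the paper: you invoke Theorem \ref{GH-time-dep} to pin down the two structural scenarios, you build $u_j$ from \eqref{Sol-6} on $\mathcal{Z}$ and from \eqref{Solu-1}/\eqref{Solu-2} off $\mathcal{Z}$, and you use the finiteness of $\mathcal{Z}$ to discard the first block.

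The one genuine difference is in how you finish. The paper goes the direct route: it verifies the full Gevrey--exponential decay estimate \eqref{deccoeff} for the $u_j$ (quoting the computations from \cite[Theorems 3.6 and 3.12]{AC}) and thereby places $u$ in $\mathscr{F}_\mu$ outright, never invoking the hypoellipticity hypothesis a second time. You instead aim for the weaker target $u \in \mathscr{U}_\mu$, which only requires the sub\-exponential growth bound on $\sup_t|u_j(t)|$, and then let the $\mathcal{S}_\mu$-global hypoellipticity of $L$ upgrade $u$ to $\mathscr{F}_\mu$. Your route is arguably cleaner in that it exploits the hypothesis more fully and spares you the derivative estimates; the paper's route is more self-contained because it does not loop back through the hypoellipticity assumption. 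Either way the bookkeeping you flag in the $b\not\equiv 0$ case (choosing \eqref{Solu-1} or \eqref{Solu-2} according to the sign of $\lambda_j$ so that the exponential weight and the prefactor cancel) is exactly what the paper handles by its reference to \cite{AC}, and your description of it is accurate.
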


\begin{proof} 
	It follows from Theorem \ref{GH-time-dep} that either $b \equiv 0$ and $a_0$ satisfies condition $(\mathscr{B})$ or $b$ does not change sign, then $b_0 \neq 0$. In both cases the set $\mathcal{Z}$ is finite, cf. \cite[Theorem 3.14 and Corollary 3.9]{AC}. Moreover, by  the equivalency of expressions \eqref{Solu-1} and \eqref{Solu-2} we can admit $b(t)\geq 0$ without loss of generality.
	
	Now, for any $f  \in \mathscr{E}_{L,\mu}$ we may assume that $\{f_j\}_{j \in \N} \subset \mathcal{G}^{\sigma}(\T)$. If $j \in \mathcal{Z}$ we define  $u_j(t)$ by expression \eqref{Sol-6}, while in case $j \notin \mathcal{Z}$ we choose $u_j(t)$ as in \eqref{Solu-1}. Therefore, $u_j(t) \in \mathcal{G}^{\sigma}(\T)$ for all $j$ and 
	$$
	\partial_t u_j(t) +    i\lambda_j c(t) u_j(t) = if_j(t),  \ t \in \mathbb{T}.
	$$ 
	
	Since $\mathcal{Z}$ is finite, then estimates for $u_j(t)$ in the case $j \in \mathcal{Z}$ have no influence. On the other hand, for $j \notin \mathcal{Z}$, by a similar argument as in the proof of \cite[Theorem 3.6]{AC} (for $b_0 = 0$) and \cite[Theorem 3.12]{AC} (for $b(t)\not \equiv 0$) we obtain that  $\{u_{j}\}  \rightsquigarrow u \in   \mathscr{F}_{\mu}$ and $Lu=f$.
	
\end{proof}

\subsection{Time independent coefficients}\label{Sec_time_ind_coef}

In this subsection, we consider the time independent coefficients operator 
\begin{equation*} 
	\mathcal{L} = D_t + (\alpha + i\beta)P(x,D_x), \ t \in \T, \ \alpha, \beta \in \R.
\end{equation*}	
Note that  $(\alpha + i\beta)\lambda_{j} \in \Z$ if and only if $\beta =0$ and $\alpha \lambda_j \in \Z.$ In this case, $\mathscr{E}_{\mathcal{L},\mu}$ is given by all functions $f \in \mathcal{S}_{\mu}$ such that
$$
\int_{0}^{2\pi}\exp\left(i\lambda_j \alpha t \right) f_j(t)dt = 0,  \ \forall j\in \mathcal{Z}.
$$

The following standard formula will be useful in the sequel.
\begin{lemma}\label{lemma-exp-j}
	Let $s, p$ be positive numbers and $\tau \in \Z_+$. For every  $\eta>0$ there exist $C_{\eta}>0$ such that
	\begin{equation*}
		\gamma^{\tau p} \exp\left(-\eta \gamma^{1/s}\right) \leq   C_{\eta}^{\tau} (\tau!)^{s p}, \ \forall  \gamma \in \N.
	\end{equation*}
\end{lemma}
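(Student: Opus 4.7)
The plan is to reduce the discrete inequality to optimization of a smooth function of a continuous variable and then invoke an elementary Stirling-type bound.

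First, I would dispose of the trivial cases: if $\tau=0$ the left-hand side is $\exp(-\eta\gamma^{1/s})\le 1$, so any $C_\eta\ge 1$ works; and if $\gamma=0$ with $\tau\ge 1$ the left-hand side vanishes. So I may assume $\tau\ge 1$ and treat $\gamma$ as a positive real variable, bounding the supremum of
\[
g(\gamma):=\gamma^{\tau p}\exp(-\eta\gamma^{1/s}),\qquad \gamma>0.
\]

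Second, I would compute the maximum of $g$ by differentiating its logarithm $\tau p\log\gamma-\eta\gamma^{1/s}$. Setting the derivative equal to zero yields the critical point $\gamma_\ast=(\tau ps/\eta)^{s}$, at which
\[
g(\gamma_\ast)=\left(\frac{\tau ps}{\eta}\right)^{\tau ps}e^{-\tau ps}.
\]
This is the only critical point in $(0,\infty)$ and $g$ tends to zero at both endpoints, so $g(\gamma)\le g(\gamma_\ast)$ for all $\gamma>0$ and in particular for every $\gamma\in\N$.

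Third, I would convert the $\tau^{\tau}$-type factor into a factorial using the elementary estimate $\tau^\tau\le e^{\tau}\tau!$ (which follows from $n!\ge (n/e)^n$). Writing $(\tau ps)^{\tau ps}=\tau^{\tau ps}(ps)^{\tau ps}\le (e^\tau \tau!)^{ps}(ps)^{\tau ps}$, I would substitute back to obtain
\[
g(\gamma_\ast)\le \left(\frac{ps}{\eta}\right)^{ps\,\tau}(\tau!)^{sp}=C_\eta^{\tau}(\tau!)^{sp},\qquad C_\eta:=\left(\frac{ps}{\eta}\right)^{ps},
\]
after the factors $e^{\tau ps}$ and $e^{-\tau ps}$ cancel.

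There is no substantial obstacle here: the argument is a textbook Lagrange-type calculation and the only care required is to track how the constants depend on $\eta$ (explicitly $C_\eta=(ps/\eta)^{ps}$, which blows up as $\eta\to 0^+$, as expected since the inequality must fail uniformly in $\gamma$ when $\eta=0$).
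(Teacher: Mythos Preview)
Your argument is correct: the calculus maximization of $\gamma^{\tau p}e^{-\eta\gamma^{1/s}}$ and the Stirling-type bound $\tau^{\tau}\le e^{\tau}\tau!$ together give exactly the estimate with $C_\eta=(ps/\eta)^{ps}$. The paper itself does not prove this lemma at all; it is simply announced as a ``standard formula,'' so there is no alternative approach to compare against.
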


\begin{theorem}\label{GS-time-ind}
	Operator $\mathcal{L}$ 	is $\mathcal{S}_{\mu}$-globally solvable if and only if either $\beta \neq 0$ or $\beta =0$ and $\alpha$ satisfies condition  ($\mathscr{A}$).
\end{theorem}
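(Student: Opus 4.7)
The plan is to split according to the two sub-cases in the statement. If $\beta \neq 0$, then Theorem \ref{GH-time-ind}(a) yields $\mathcal{S}_\mu$-global hypoellipticity and Proposition \ref{GHimplGS} then gives global solvability; this disposes of the half of the theorem in which $(\mathscr{A})$ plays no role. All the remaining work concerns the case $\beta = 0$, for which I have to characterize solvability through $(\mathscr{A})$.

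For the sufficiency when $\beta = 0$ and $(\mathscr{A})$ holds, I would fix $f \in \mathscr{E}_{\mathcal{L},\mu}$ and construct its Fourier coefficients term by term: use \eqref{Sol-6} when $j \in \mathcal{Z}$ (periodicity of the resulting $u_j$ is forced precisely by the admissibility relation $\int_0^{2\pi}e^{i\alpha\lambda_j t}f_j(t)\,dt = 0$) and use \eqref{Solu-1} when $j \notin \mathcal{Z}$. The crucial observation is that for $j \notin \mathcal{Z}$ one has $\alpha\lambda_j \notin \Z$, so the exceptional case $\tau = \alpha\lambda_j$ excluded in $(\mathscr{A})$ never arises and $(\mathscr{A})$ actually gives $|\tau - \alpha\lambda_j| \geq C_\epsilon \exp(-\epsilon j^{1/(2n\mu)})$ for every $\tau \in \Z$. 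This is stronger than condition (i) of Lemma \ref{lt2} with $\eta = 2n\mu$, so the equivalent condition (ii) yields $\Theta_j \leq C_\delta \exp(\delta j^{1/(2n\mu)})$ for any $\delta > 0$. Combined with \eqref{deccoeff} applied to $f_j$ and the choice $\delta < \epsilon$, this produces $|u_j(t)| \leq C\exp(-\epsilon' j^{1/(2n\mu)})$; derivative bounds of type \eqref{deccoeff} follow by iterating the ODE $\partial_t u_j = -i\alpha\lambda_j u_j + if_j$, which expresses $\partial_t^k u_j$ as a polynomial in $\alpha\lambda_j$ of degree $k$ whose remaining factors are the $\partial_t^\ell f_j$; the loss $|\alpha\lambda_j|^k \leq C^k j^{km/(2n)}$ is absorbed into a fraction of the exponential decay through Lemma \ref{lemma-exp-j} (applied with $p = m/(2n)$, $s = 2n\mu$), at the price of enlarging the Gevrey exponent to some $\sigma' \geq m\mu$. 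The analysis for $j \in \mathcal{Z}$ is analogous and even simpler, since no $\Theta_j$-factor is present there.

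For the necessity, assume $\beta = 0$ and that $(\mathscr{A})$ fails. Extract $\epsilon_0 > 0$ together with sequences $j_k \to \infty$ and $\tau_k \in \Z$ such that $\tau_k \neq \alpha\lambda_{j_k}$ and $|\tau_k - \alpha\lambda_{j_k}| < k^{-1}\exp(-\epsilon_0 j_k^{1/(2n\mu)})$; necessarily $j_k \notin \mathcal{Z}$, for otherwise the left-hand side would be at least $1$. I would then construct a counter-example by setting $f_{j_k}(t) = \exp(-\epsilon_1 j_k^{1/(2n\mu)})e^{-i\tau_k t}$ for a fixed $\epsilon_1 \in (0,\epsilon_0)$ and $f_j \equiv 0$ otherwise. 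Lemma \ref{lemma-exp-j} shows that $f \in \mathcal{S}_{\sigma_0,\mu}$ for a suitable $\sigma_0 > 1$, and the admissibility $f \in \mathscr{E}_{\mathcal{L},\mu}$ holds trivially since $j_k \notin \mathcal{Z}$. If $\mathcal{L} u = f$ had a solution $u \in \mathscr{F}_\mu$, the $(-\tau_k)$-th Fourier coefficient in $t$ of the identity $\partial_t u_{j_k} + i\alpha\lambda_{j_k} u_{j_k} = if_{j_k}$ would force
$$ |\hat{u}_{j_k}(-\tau_k)| = \frac{\exp(-\epsilon_1 j_k^{1/(2n\mu)})}{|\tau_k - \alpha\lambda_{j_k}|} \geq k\exp\bigl((\epsilon_0 - \epsilon_1)j_k^{1/(2n\mu)}\bigr), $$
which blows up in $k$ and contradicts the exponential decay of $u_{j_k}$ imposed by \eqref{deccoeff}.

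The main obstacle lies in the book-keeping of the sufficiency estimates, where one must simultaneously track the Gevrey growth in the time-derivative index $k$ and the consumption of a fraction of the $j$-exponential decay both by Lemma \ref{lt2} (to absorb $\Theta_j$) and by Lemma \ref{lemma-exp-j} (to absorb $|\alpha\lambda_j|^k$), in such a way that a single $\epsilon' > 0$ survives uniformly in $j$ and the resulting Gevrey exponent $\sigma'$ stays finite; since $\mathscr{F}_\mu = \bigcup_{\sigma > 1}\mathcal{S}_{\sigma,\mu}$, enlarging $\sigma$ is ultimately harmless.
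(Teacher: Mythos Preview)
Your proposal is correct and follows essentially the same approach as the paper: the case $\beta\neq 0$ is disposed of via Theorem~\ref{GH-time-ind} and Proposition~\ref{GHimplGS}; the sufficiency for $\beta=0$ uses the explicit solutions \eqref{Sol-6} and \eqref{Solu-1} together with the $(\mathscr{A})$-controlled bound on $\Theta_j$ and Lemma~\ref{lemma-exp-j}; and the necessity is obtained by building a single-frequency admissible $f$ supported on indices $j_k\notin\mathcal{Z}$ that forces $|u_{j_k}|$ to violate \eqref{deccoeff}. The only cosmetic differences are that the paper obtains the derivative estimates by applying Leibniz (and Fa\`a di Bruno) directly to the integral formulas rather than by iterating the ODE, and that in the necessity the paper uses the weight $\exp\bigl(-\tfrac{\epsilon_0}{2}(|\tau_\ell|+j_\ell)^{1/(2n\mu)}\bigr)$ in place of your $\exp(-\epsilon_1 j_k^{1/(2n\mu)})$; both choices lead to the same contradiction.
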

\begin{proof}
	If $\beta \neq 0$, then the solvability is a consequence of Theorem \ref{GH-time-ind} and Proposition \ref{GHimplGS}. On the other hand, suppose that $\beta = 0$ and 
	assume condition  ($\mathscr{A}$). 
	
	Let $f \in \mathscr{E}_{\mathcal{L},\mu}$ be fixed. If  $j \in \mathcal{Z}$, we set 
	\begin{equation*}
		u_j(t) = \exp(-i\lambda_j \alpha t)\int_{0}^{t}\exp\left(i\lambda_j \alpha s\right) f_j(s)ds,
	\end{equation*}
	and
	\begin{align}\label{Sol-beta=0-jnotinZ}
		u_j(t) = \frac{i}{1 - e^{-  2  \pi i\lambda_j \alpha  }} \int_{0}^{2\pi}\exp\left(-i\lambda_j \alpha s \right) f_j(t-s)ds,
	\end{align}
	if $j \notin \mathcal{Z}$.
	
	In the first case, it follows by  Leibniz formula that
	\begin{multline*}
		\partial_t^\gamma u_j(t)= (-i\lambda_j \alpha)^\gamma\exp\left(-i\lambda_j \alpha t\right) \int_0^t \exp\left(i\lambda_j \alpha s\right) f_j(s)ds \\  + \sum_{0 \neq \delta \leq \gamma}\binom{\gamma}{\delta}(-i\lambda_j \alpha)^{\gamma-\delta}\sum_{\beta \leq \delta-1} \binom{\delta-1-\beta}{\beta} (i\lambda_j \alpha)^{\beta}\partial_t^{\delta-1-\beta}f_j(t),
	\end{multline*}
	and, by $|\lambda_j| \leq C'j^{m/2n},$ we get
	\begin{multline*}
		|\partial_t^\gamma u_j(t)| \leq C(C'|\alpha|)^{\gamma} j^{\frac{m|\gamma|}{2n}} \exp(-\varepsilon_0 j^{\frac{1}{2n\mu}}) \,  \\
		+ \sum_{\delta,\gamma,\beta} (C'|\alpha|)^{\gamma-\delta+\beta} j^{\frac{m(\gamma-\delta+\beta)}{2n}}  C^{\delta-\beta}(\delta-1-\beta)!^\sigma \exp(-\varepsilon_0 j^{\frac{1}{2n\mu}}),
	\end{multline*}
	where 
	$$
	\sum_{\delta,\gamma,\beta} =  \sum_{0 \neq \delta \leq \gamma}\binom{\gamma}{\delta}\sum_{\beta \leq \delta-1} \binom{\delta-1-\beta}{\beta}.
	$$

	The last estimate, Lemma \ref{lemma-exp-j} and standard factorial inequalities guarantee that $u_j \in \mathcal{G}^\sigma(\mathbb{T})$ and
	$$|\partial_t^\gamma u_j(t)| \leq C^{\gamma+1} \gamma!^{\textrm{max}\{\sigma, m\mu \}}\exp\left(-\frac{\varepsilon_0}{2} j^{\frac{1}{2n\mu}}\right)$$ for some positive constant $C$ independent of $\gamma.$
	Similarly, in case $j \notin \mathcal{Z}$, 	using Fa\`a di Bruno formula, we obtain the same type of estimate for \eqref{Sol-beta=0-jnotinZ}. Therefore, $\{u_{j}\}  \rightsquigarrow u \in  \mathcal{F}_\mu$ and $\mathcal{L} u = f$, which imply the solvability.

	Conversely,  assume that $\alpha + i\beta$ does not satisfy condition  ($\mathscr{A}$). By Lemma \ref{lt2} there are $\epsilon_0>0$ and a sequence $(j_\ell, \tau_\ell) \in \N \times \Z$ such that $|j_\ell|+ |\tau_\ell|$ is increasing and 
	\begin{equation*}
		0<|\tau_{\ell} - (\alpha + i \beta)\lambda_{j_\ell}|< 
		\exp \left(-\epsilon_0/2(|\tau_{\ell}|+j_{\ell})^{1/2n\mu}\right).
	\end{equation*}
	
	Since  $j_{\ell} \notin \mathcal{Z}$, for all $\ell$, then
	\begin{equation*}
		f_j(t) = \left\{
		\begin{array}{l}
			0, \ j \neq j_{\ell},  \\
			\exp\left[-\dfrac{\epsilon_0}{2}(|\tau_{\ell}|+j_{\ell})^{1/2n\mu} + i\tau_{\ell}t\right], \ j =j_{\ell}
		\end{array}
		\right.
	\end{equation*}
	is such that $\{f_{j}\}  \rightsquigarrow f \in   \mathscr{E}_{\mathcal{L},\mu}$. Therefore, if $u \in \mathscr{F}_{\mu}$ satisfies $\mathcal{L}u=f$ we should have 
	$$
	u_{j_\ell}(t) = \frac{i}{1 - e^{-  2  \pi i\lambda_{j_\ell} (\alpha + i \beta)  }} \int_{0}^{2\pi}\exp\left(-i\lambda_{j_\ell} (\alpha + i \beta) s \right) f_{j_\ell}(t-s)ds, 
	$$
	implying
	$$
	|u_{j_\ell}(0)| = \dfrac{\exp\left[-\dfrac{\epsilon_0}{2}(|\tau_{\ell}|+j_{\ell})^{1/2n\mu}\right]}{|\tau_{\ell} - (\alpha + i \beta)\lambda_{j_\ell}|} > 1, \forall \ell \in \N,
	$$
	which contradicts \eqref{deccoeff}.

\end{proof}

\subsection{Application: Cauchy problem \label{Sec_exemple_time_ind}}
Our results can be applied also to the problem of the existence and uniqueness of periodic solutions to the Cauchy problem associated to the operator $L$ in \eqref{op-intro}. We shall not give an exhaustive analysis of this problem but we shall limit to outline some examples. 
\\

\noindent
\textbf{Example 1.} Consider the operator $\mathcal{L} = D_t + (\alpha + i \beta) H$ and the Cauchy problem
\begin{equation}\label{harmonic-R-exa}
	\left\{
	\begin{array}{l}
		D_tu + (\alpha + i \beta) Hu  = f,\\
		u(0,x) = g(x)
	\end{array}
	\right.	
\end{equation}	
defined on $\T \times \R$, where $H$ stands for the Harmonic oscillator 
\begin{equation*}
	H = -\dfrac{d^2}{dx^2} + x^2, \ x \in \R,
\end{equation*}	
for which  $\lambda_j = 2j + 1$, $j \in \N_0$. 

If $\beta \neq 0$, then $\mathcal{L}$ is globally solvable. In view of the Fourier expansions $g(x) = \sum_{j \in \N} g_j \varphi_j(x)$, we get  
$$
g_j = u_j(0) = u_j(2 \pi), \ j \in \N.
$$

In the homogeneous case $f \equiv 0$, we have
$$
u_j(t) = g_j \exp\left[-i (2j + 1)(\alpha+ i\beta)  t \right]
$$
with
$$
g_j (1 - \exp\left[-i 2\pi (2j + 1) (\alpha+ i\beta) \right]) = 0.
$$
Since $\beta \neq 0$, then $g_j = 0$ for all $j$ and consequently $u \equiv 0$ is the unique solution of \eqref{harmonic-R-exa}, with $g\equiv 0$. If $g \neq 0$ then the Cauchy problem \eqref{harmonic-R-exa} has no solutions. Similar conclusion holds if $\beta = 0$ and $\alpha \notin \Q$.

Consider $\beta = 0$, $\alpha = 1/3$. In this case
$$
| \tau - \alpha \lambda_j| \geq \dfrac{1}{3}
$$
whenever  $\tau - 1/3(2j+1) \neq 0$. Then, condition ($\mathscr{A}$) is fulfilled and $\mathcal{L}$ is globally solvable. In particular, 
\begin{equation*}
	u_j(t) =	\left\{
	\begin{array}{l}
		g_j \exp\left(-i  \kappa_j  t \right), \ \textrm{ if } \  (2j+1) \in 3\N, \\
		0, \ \textrm{ if } \  (2j+1) \notin 3\N, 
	\end{array}
	\right.	
\end{equation*}	
where $\kappa_j = (2j+1)/3 \in \N$, generate the unique solution of \eqref{harmonic-R-exa}.

Now, let us consider the non-homogeneous case. Assuming  $\beta \geq 0$, if $\mathcal{L}$ is globally solvable, then we have
$$
u_j(t) = \exp\left( -i \alpha t \right)\left[ g_j +  \int_{0}^{t}\exp\left(i\lambda_j  \alpha s\right) f_j(s)ds\right]
$$
if $j \in \mathcal{Z}$ and 
$$
u_j(t) = \dfrac{i}{e^{ 2 \pi i \lambda_j (\alpha+ i\beta)} - 1} \int_{0}^{2\pi}\exp\left(i\lambda_j(\alpha+ i\beta)s\right) f_j(t+s)ds,
$$
whenever $j \notin \mathcal{Z}$, then we must have
$$
g_j =  \dfrac{i}{e^{ 2 \pi i \lambda_j (\alpha+ i\beta)} - 1} \int_{0}^{2\pi}\exp\left(i\lambda_j(\alpha+ i\beta)s\right) f_j(s)ds.
$$
The latter condition can be viewed as a compatibility condition between $f$ and the initial datum $g$. 

We observe that if $v=v(t,x)$ is a solution of the non-homogeneous problem with the initial datum $h(x)=\sum_{j \in \N}h_j\varphi_j(x)$. Then, 
$$
\|g-h\|_{L^2(\R)}^2 = \sum_{j \in \N}|g_j - h_j|^2
$$
and
$$
|g_j(t) - h_j(t)|= 
\left\{
\begin{array}{l}
	0, \ j \notin \mathcal{Z}, \\
	|g_j - h_j|, \  j \in \mathcal{Z}.
\end{array}
\right.
$$

\noindent
\textbf{Example 2.} Let us now consider 
$L = D_t + (\sin(t) + \cos(t)) H$ and the problem
\begin{equation*}
	\left\{
	\begin{array}{l}
		\partial_tu + i(\sin(t) + \cos(t)) Hu  = 0,\\
		u(0,x) = g(x)
	\end{array}
	\right.	
\end{equation*}	
defined on $\T \times \R$.

In this case, $a_0 = 1$ and $\mathcal{Z} = \N$. Since
$$
|\tau - \lambda_{j}a_0| > 1, 
$$
whenever $\tau - \lambda_{j}a_0 \neq 0$ we see that $L$ is globally solvable. Moreover, 
$$
u_j(t) = g_j \exp\left[-i\lambda_{j}\left(\sin(t) - \cos(t) + 1\right)\right].
$$

\subsection{Reduction to the normal form} \label{sec_reduction}

In this subsection, we show that  operator $L$ is globally solvable if and only if the same occurs to its  normal form, namely, the operator
$$
L_{a_0} = D_t + (a_0 + ib(t))P(x,D_x).
$$ 
This is a consequence of a conjugation formula presented in the next Proposition \ref{Prop-conjugation}.

\vskip0.2cm
\begin{proposition}\label{Prop-conjugation}
	There exists an isomorphism $\Psi: \mathscr{F}_\mu \to \mathscr{F}_\mu$ such that 
	\begin{equation}\label{conjugationformula}
		\Psi^{-1} \circ L\circ \Psi =  L_{a_0}.
	\end{equation}
	
\end{proposition}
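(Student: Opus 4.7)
The idea is to absorb the time-dependent part of the real component of $c(t)$ by conjugation with a Fourier multiplier built from $P$. I would set
\[
\alpha(t) := \int_0^t (a_0 - a(s))\,ds,
\]
which is $2\pi$-periodic (since $a_0$ is the mean of $a$) and belongs to $\mathcal{G}^\sigma(\T)$. Then I define $\Psi$ diagonally on the eigenfunction expansion of any $u = \sum_{j}u_j(t)\varphi_j(x)$ by
\[
\Psi u(t,x) := \sum_{j\in\N} e^{i\alpha(t)\lambda_j}\,u_j(t)\,\varphi_j(x),
\]
so that formally $\Psi = e^{i\alpha(t)P}$. The candidate inverse $\Psi^{-1}$ is the operator obtained by replacing $\alpha$ with $-\alpha$; that this is a genuine two-sided inverse is immediate from $e^{i\alpha\lambda_j}e^{-i\alpha\lambda_j}\equiv 1$ on each Fourier mode.

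The intertwining identity $L\circ \Psi = \Psi\circ L_{a_0}$ is then a one-line computation on coefficients. Using $\alpha'(t) = a_0 - a(t)$,
\begin{align*}
(L\Psi u)_j(t) &= D_t\bigl[e^{i\alpha(t)\lambda_j}u_j(t)\bigr] + c(t)\lambda_j\, e^{i\alpha(t)\lambda_j}u_j(t) \\
&= e^{i\alpha(t)\lambda_j}\bigl[D_t u_j(t) + (\alpha'(t)+c(t))\lambda_j u_j(t)\bigr] \\
&= e^{i\alpha(t)\lambda_j}\bigl[D_t u_j(t) + (a_0+ib(t))\lambda_j u_j(t)\bigr] = (\Psi L_{a_0}u)_j(t),
\end{align*}
which yields \eqref{conjugationformula} once $\Psi$ is known to map $\mathscr{F}_\mu$ bijectively onto itself.

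The only nontrivial step is therefore to verify that $\Psi$ preserves $\mathscr{F}_\mu$. Starting from $u\in \mathcal{S}_{\sigma,\mu}$ with coefficient estimates as in \eqref{deccoeff}, I would combine Leibniz's rule with the Fa\`a di Bruno bound
\[
|\partial_t^N e^{i\alpha(t)\lambda_j}| \le C_0^{N+1}(N!)^\sigma (1+|\lambda_j|)^N,
\]
which follows from $|\alpha^{(r)}(t)| \le A^r(r!)^\sigma$ and the standard formula for the derivatives of a composition. Combined with \eqref{weyl}, this yields
\[
|\partial_t^k(\Psi u)_j(t)| \le C_1^{k+1}(k!)^\sigma\, j^{mk/(2n)}\exp(-\epsilon j^{1/(2n\mu)}).
\]
The polynomial factor $j^{mk/(2n)}$ is absorbed by Lemma \ref{lemma-exp-j} applied with $p=m/(2n)$ and $s=2n\mu$, at the cost of a factor $C_2^k(k!)^{m\mu}$ and of halving $\epsilon$. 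The resulting estimate places $\Psi u$ in $\mathcal{S}_{\sigma',\mu}$ with $\sigma' = \max\{\sigma, m\mu\}$, hence in $\mathscr{F}_\mu$. The identical argument with $-\alpha$ shows that $\Psi^{-1}$ preserves $\mathscr{F}_\mu$ as well.

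The main (and essentially only) obstacle is the Fa\`a di Bruno estimate: differentiating $e^{i\alpha(t)\lambda_j}$ generates factors of $\lambda_j$ and inflates the Gevrey order from $\sigma$ to $\max\{\sigma,m\mu\}$. This inflation is harmless precisely because $\mathscr{F}_\mu$ is defined as the union over all $\sigma>1$, so any finite inflation still lands in $\mathscr{F}_\mu$.
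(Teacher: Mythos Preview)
Your proof is correct and follows essentially the same route as the paper: the same multiplier $\Psi$ (your $\alpha$ is the paper's $-A$), the same one-line coefficient check of the conjugation identity, and the same Leibniz/Fa\`a di Bruno/Lemma~\ref{lemma-exp-j} argument for preservation of $\mathscr{F}_\mu$. One minor point: from your crude bound $|\partial_t^N e^{i\alpha\lambda_j}|\le C^{N+1}(N!)^\sigma(1+|\lambda_j|)^N$ the lemma actually delivers the output Gevrey index $\sigma+m\mu$ rather than $\max\{\sigma,m\mu\}$ (the paper's finer tracking of the $1/k!$ and the factor $[(\ell-k)!]^\sigma$ in Fa\`a di Bruno yields $\max\{\tilde\sigma,m\mu-1\}$), but this is harmless since $\mathscr{F}_\mu$ is a union over all $\sigma>1$.
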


\begin{proof}
	For each  
	$u = \sum_{j \in \N} u_j(t) \varphi_j(x) \in \mathscr{F}_\mu$ we define
	\begin{equation}\label{conjugation}
		\Psi u = \sum_{j \in \N} u_j(t) \exp(-i\lambda_j A(t))\varphi_j(x)
	\end{equation}
	and 
	\begin{equation*}
		\Psi^{-1} u = \sum_{j \in \N} u_j(t) \exp(i\lambda_j A(t))\varphi_j(x),
	\end{equation*}	
	where $A(t) = \int_{0}^{t}a(s)ds - a_0t$.
	
	If $\Psi, \Psi^{-1}: \mathcal {F}_\mu \to \mathcal {F}_\mu$ are well defined, then it is easy to verify linearity and the equality \eqref{conjugationformula}. Therefore, it is enough to prove that $\Psi u, \Psi^{-1} u \in \mathscr{F}_\mu$. For this, let $u \in \mathcal{S}_{\theta,\mu}$ and set
	\begin{equation*}
		\psi_j(t) =  u_j(t) \exp(-i\lambda_j A(t)), j \in \N.
	\end{equation*}
	
	Let $\gamma \in \Z_+$ be fixed. By Leibniz and Fa\`a di Bruno formulas we get
	\begin{eqnarray*}
		|\partial_t^{\gamma} \psi_j(t)| 
		& \leq & 2 \pi C_1 \sum_{\ell=0}^{\gamma}\left\{ \binom{\gamma}{\ell}
		C_2^{\ell} \sum_{\Delta(k), \, \ell}\frac1{k!} \cdot \frac{\ell!}{\ell_1! \cdots \ell_k! }j^{\frac{km}{2n}} C^{\ell-k+1}_3[(\ell -k)!]^\sigma \right. 
		\\ && \times \left.  \sup_{\tau \in [0,2\pi]}
		|\partial_t^{\gamma - \ell} u_j(\tau)| \right\}, 
	\end{eqnarray*}
	since $|\lambda_{j}| \leq C_4 j^{m/2n}$ by \eqref{weyl}, where
	$
	\sum\limits_{\Delta(k), \, \ell} = \sum\limits_{k=1}^\ell\sum\limits_{\stackrel{\ell_1+\ldots+\ell_k=\ell}{\ell_\nu \geq 1, \forall \nu}}.
	$

	Since $u \in \mathcal{F}_\mu$, there exist $\epsilon_0 >0$ and $C_5>0$ such that
	\begin{equation*}
		\sup_{\tau \in [0,2\pi]} | \partial_t^{\gamma-\ell} u_j(\tau)| \leq	C_5^{\gamma-\ell+1} [(\gamma-\ell)!]^{\theta} \exp \left(-\epsilon_0 j^{\frac{1}{2n\mu}} \right).
	\end{equation*}
	
	Moreover, applying Lemma \ref{lemma-exp-j} with $s=2n\mu$ and $p=m/2n$, we have
	\begin{equation*}
		j^{k m/2n} 
		\leq C_{\epsilon_0}^{k} (k !)^{m \mu} \exp \left(\frac{\epsilon_0}{2} j^{\frac{1}{2n\mu}} \right).
	\end{equation*}
	Hence, by setting $\widetilde{\sigma}=\max\{\theta, \sigma\}$ we get
	$$
	|\partial_t^{\gamma} \psi_j(t)|
	\leq C^{\gamma + 1} (\gamma!)^{\max\{\widetilde{\sigma}, m\mu-1\}} \exp\left(-\frac{\epsilon_0}{2} j^{\frac{1}{2n\mu}}
	\right),
	$$
	implying $\Psi u \in \mathcal{S}_{\max\{\widetilde{\sigma}, m\mu-1\},\mu}$ and that it is well defined.
	
	With similar arguments we may proof the same for  $\Psi^{-1}$.
	
\end{proof}

\begin{proposition}\label{LtoL-0}
	Let $\mathscr{E}_{L_{a_0},\mu}$ be  the space of admissible functions of operator $L_{a_0}$, that is, 
	the set of all $f \in \mathcal{S}_{\mu}$ such that
	$$
	\int_{0}^{2\pi}\exp\left(i\lambda_j \int_{0}^t (a_0+ ib(s))ds \right) f_j(t)dt = 0, 
	$$
	whenever $j\in \mathcal{Z}$. 		Then:
	
	\begin{enumerate}
		\item [(a)] $\Psi: \mathscr{E}_{L_{a_0},\mu} \to\mathscr{E}_{L,\mu}$ is an isomorphism;
		
		\item [(b)]  $L$ is $\mathcal{S}_{\mu}$-globally solvable if and only if the same is true for $L_{a_0}$;
		
		\item [(c)] If $\mathcal{Z} = \mathbb{N}$, then $L$ is  $\mathcal{S}_{\mu}$--globally solvable if and only if  
		$$
		L_b= D_t + ib(t)P(x,D_x)
		$$
		it is also $\mathcal{S}_{\mu}$-globally solvable.
		
	\end{enumerate}
	
\end{proposition}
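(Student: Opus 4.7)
The strategy is to leverage the intertwining relation $L \circ \Psi = \Psi \circ L_{a_0}$ established in Proposition \ref{Prop-conjugation}, together with a direct inspection of the admissibility conditions.

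For part (a), I first observe that $\Psi$ is already an isomorphism of $\mathscr{F}_\mu$, so the content is that it restricts to a bijection between the admissible subspaces. The Fourier coefficients of $\Psi f$ are $(\Psi f)_j(t) = f_j(t) \exp(-i\lambda_j A(t))$, and the key identity
\begin{equation*}
\int_0^t c(s)\, ds - A(t) = \int_0^t (a_0 + ib(s))\, ds,
\end{equation*}
which follows immediately from $A(t) = \int_0^t a(s)\, ds - a_0 t$, shows that the admissibility integral defining $\mathscr{E}_{L,\mu}$ applied to $\Psi f$ coincides, for every $j \in \mathcal{Z}$, with the admissibility integral defining $\mathscr{E}_{L_{a_0},\mu}$ applied to $f$. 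Note that $\mathcal{Z}$ is the same for both operators, since the averages of $c(t)$ and $a_0 + ib(t)$ both equal $c_0$. Hence $\Psi$ maps $\mathscr{E}_{L_{a_0},\mu}$ bijectively onto $\mathscr{E}_{L,\mu}$.

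Part (b) is then a formal consequence of (a) and the intertwining relation. Given $f \in \mathscr{E}_{L,\mu}$, the element $\Psi^{-1} f$ lies in $\mathscr{E}_{L_{a_0},\mu}$; if $L_{a_0}$ is $\mathcal{S}_{\mu}$-globally solvable there exists $v \in \mathscr{F}_\mu$ with $L_{a_0} v = \Psi^{-1} f$, and then $u = \Psi v$ lies in $\mathscr{F}_\mu$ and satisfies $Lu = \Psi L_{a_0} v = f$. The converse implication is symmetric.

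For part (c), the hypothesis $\mathcal{Z} = \N$ forces $b_0 = 0$ (otherwise $\lambda_j b_0 \notin \Z$ for large $j$) and $\lambda_j a_0 \in \Z$ for every $j \in \N$. This integrality is precisely what makes each exponential $e^{-i\lambda_j a_0 t}$ a $2\pi$-periodic function of $t$, so the auxiliary map
\begin{equation*}
\widetilde{\Psi} u = \sum_{j \in \N} u_j(t) \exp(-i\lambda_j a_0 t) \varphi_j(x)
\end{equation*}
is well defined on $\T \times \R^n$. A direct Leibniz-rule computation yields $L_{a_0} \circ \widetilde{\Psi} = \widetilde{\Psi} \circ L_b$, and the admissibility analogue of (a), now based on the identity $\int_0^t (a_0 + ib(s))\, ds - a_0 t = \int_0^t ib(s)\, ds$, shows that $\widetilde{\Psi}$ restricts to a bijection $\mathscr{E}_{L_b,\mu} \to \mathscr{E}_{L_{a_0},\mu}$. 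Combined with part (b), global solvability of $L$ is equivalent to that of $L_b$. The main technical point is the verification that $\widetilde{\Psi}$ is a bicontinuous isomorphism of $\mathscr{F}_\mu$; this requires controlling derivatives of the scalar factor $e^{-i\lambda_j a_0 t}$, and since $|\lambda_j| \leq C j^{m/2n}$ by \eqref{weyl}, an application of Lemma \ref{lemma-exp-j} strictly parallel to (and in fact simpler than) the one carried out in the proof of Proposition \ref{Prop-conjugation} yields the required Gevrey--exponential estimates on the Fourier coefficients of $\widetilde{\Psi} u$.
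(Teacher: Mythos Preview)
Your proof is correct and follows essentially the same approach as the paper: part (a) is a direct computation with the Fourier coefficients (the paper simply calls it ``trivial''), part (b) is the formal consequence of the intertwining relation, and part (c) is handled by a second conjugation map whose periodicity is guaranteed by $\mathcal{Z}=\N$. The only minor difference is that in (c) the paper conjugates $L$ directly to $L_b$ via the factor $\exp\!\bigl(-i\lambda_j\int_0^t a(s)\,ds\bigr)$, whereas you conjugate $L_{a_0}$ to $L_b$ via $\exp(-i\lambda_j a_0 t)$ and then invoke (b); the two routes are equivalent since their composition is precisely the original $\Psi$.
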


\begin{proof}
	Part (a) is trivial. To verify (b), assume that $L$ is $\mathcal{S}_{\mu}$-globally solvable  and let  $f \in \mathscr{E}_{L_{a_0},\mu}$. There exists $u \in \mathscr{F}_\mu$ such that $Lu = \Psi (f)$, then it follows from   \eqref{conjugation} that $L_{a_0}[\Psi^{-1}(u)] = f$ and  the solvability of $L_{a_0}$.
	
	Viceversa, assume 	that $L_{a_0}$ is $\mathcal{S}_{\mu}$-globally solvable. Given $f\in \mathscr{E}_{L,\mu}$ there is $u \in \mathscr{F}_\mu$ such that $L_{a_0}u = \Psi^{-1} (f)$ implying
	$L[\Psi(u)] = f$  and  the global solvability of $L$.

	To verify (c) it is sufficient to observe that if $\mathcal{Z}= \N$, then functions
	\begin{equation*}
		\widetilde{\psi_j}(t) =  u_j(t) \exp\left(-i\lambda_j \int_{0}^{t}a(s)ds\right)
	\end{equation*}
	are $2\pi$-periodic for every $j \in \N$ and we may use
	$$
	\widetilde{\Psi} u = \sum_{j \in \N}\widetilde{\psi_j}(t) \varphi_j(x)
	$$
	to obtain a new conjugation formula instead of \eqref{conjugationformula}.
\end{proof}

\section{Proof of the main theorem} \label{Sec_proof_main_the}

In this section, we prove Theorem \ref{main_theorem}. We divide the proof in various steps.

\subsection{Sufficient conditions} The first step is Theorem \ref{suffientt_cond} where we show that  each of the conditions (a) and (b) in Theorem \ref{main_theorem} is sufficient for the global solvability of operator $L$.

In particular, we point out that in view of Proposition \ref{LtoL-0} it is equivalent to consider the operator 
$$
L_{a_0} = D_t + (a_0 + ib(t))P(x,D_x).
$$
Notice that if $b$ does not change sign and $c_0$ satisfies ($\mathscr{A}$), then either $b \equiv 0$ on $\T$ and $a_0$ 
satisfies ($\mathscr{A}$) or $b$ is not identically zero.
Then, the sufficiency in item (a) of Theorem \ref{main_theorem} is a direct consequence of the following result.

\vskip0.2cm
\begin{theorem}\label{suffientt_cond}
	Each of the following conditions is sufficient to guarantee the  $\mathcal{S}_{\mu}$-global solvability of operator $L_{a_0}$.

	\begin{enumerate}
		\item [(a)] $b\equiv 0$ and $a_0$ satisfies $(\mathscr{A})$;
		
		\item [(b)] $b\not\equiv 0$ and  does not change sign;
		
		\item [(c)] $b$ changes sign, $\mathcal{Z}^C$ is finite and $\Omega_{r}$ is connected for all $r \in \R$.
	\end{enumerate}

\end{theorem}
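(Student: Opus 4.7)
For case (a), with $b\equiv 0$, the operator $L_{a_0}$ is the time-independent operator $D_t+a_0 P(x,D_x)$ of Theorem \ref{GS-time-ind} with $\alpha=a_0,\,\beta=0$, and condition $(\mathscr{A})$ on $a_0$ is exactly the hypothesis of that theorem; so (a) follows at once. For case (b), when $b\not\equiv 0$ does not change sign, Theorem \ref{GH-time-dep}(a) yields $\mathcal{S}_\mu$-global hypoellipticity of $L_{a_0}$, and Proposition \ref{GHimplGS} turns this into $\mathcal{S}_\mu$-global solvability.

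For case (c), I would first observe that $\mathcal{Z}^C$ finite, together with $\lambda_j\sim \rho j^{m/2n}\to \infty$, forces $c_0=0$, for otherwise infinitely many of the $\lambda_j c_0$ would fail to lie in $\Z$. Thus $a_0=b_0=0$ and the primitive $B(t)=\int_0^t b(s)\,ds$ is $2\pi$-periodic and of class $\mathcal{G}^\sigma$. Connectedness of the super-level sets $\Omega_r=\{B>r\}$ for all $r\in\R$ is equivalent to $B$ being \emph{unimodal} on $\T$: a unique maximum at some $t_M$, a unique minimum at some $t_m$, and strict monotonicity on each of the two arcs of $\T\setminus\{t_m,t_M\}$. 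In particular, for each $t\in\T$ one of the two arcs joining $t_M$ to $t$ lies entirely inside $\{s:B(s)\geq B(t)\}$, and symmetrically one of the two arcs from $t_m$ to $t$ lies inside $\{s:B(s)\leq B(t)\}$. Given $f\in \mathscr{E}_{L_{a_0},\mu}$, I would then build $u_j$ coefficientwise. For the finitely many $j\in \mathcal{Z}^C$, use formula \eqref{Solu-1}; finitely many Gevrey terms have no effect on the exponential decay in $j$. For the cofinite set $j\in \mathcal{Z}$, the admissibility condition \eqref{admissible_condition} makes
\begin{equation*}
u_j(t)=i\int_{t_\star}^{t} e^{\lambda_j(B(t)-B(s))}f_j(s)\,ds
\end{equation*}
a smooth $2\pi$-periodic solution of $\partial_t u_j-\lambda_j b(t)u_j=if_j$, with $t_\star=t_M$ if $\lambda_j\geq 0$ and $t_\star=t_m$ if $\lambda_j<0$; admissibility guarantees that the integral does not depend on which of the two arcs from $t_\star$ to $t$ is chosen as the path.

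Using the arc provided by unimodality, the exponent $\lambda_j(B(t)-B(s))$ is nonpositive along the integration, so $\sup_{t\in \T}|u_j(t)|\leq 2\pi\sup_{t\in \T}|f_j(t)|$ uniformly in $j$. The Gevrey bounds on $\partial_t^\gamma u_j$ follow by differentiating the ODE $\partial_t u_j=\lambda_j b(t)u_j+if_j$ inductively: each derivative produces a factor controlled by $|\lambda_j|\leq C j^{m/2n}$ together with Gevrey constants of $b$, yielding polynomial factors $j^{\gamma m/2n}$ which are absorbed into a slightly weakened exponential decay of $f_j$ from \eqref{deccoeff} via Lemma \ref{lemma-exp-j} with $s=2n\mu,\,p=m/2n$. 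This is the same mechanism already employed in the proofs of Theorems \ref{GS-time-ind} and \ref{GH-time-dep}. One concludes $\{u_j\}\rightsquigarrow u\in \mathscr{F}_\mu$ with $L_{a_0}u=f$, and Proposition \ref{LtoL-0}(b) transfers the result to $L$. The main obstacle is precisely the uniform bound on $e^{\lambda_j(B(t)-B(s))}$: it is the topological hypothesis on $\Omega_r$, via the unimodality of $B$, that makes this bound possible for every $j$ and every $t$, and everything else is a routine adaptation of the arguments in \cite{AC}.
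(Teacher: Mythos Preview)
Your treatment of (a) and (b) matches the paper exactly. For (c) your overall strategy --- choose $t_\star$ as the maximizer (resp.\ minimizer) of $B$, use connectedness of the super/sub-level sets to find an arc along which the kernel $e^{\lambda_j(B(t)-B(s))}$ is bounded by $1$, then differentiate the ODE and absorb powers of $|\lambda_j|$ via Lemma~\ref{lemma-exp-j} --- is precisely the route the paper takes (see the definition of $\Theta_t$, the arc $\gamma_{t,t_1}$, and Remark~\ref{lambda-I}).

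There is, however, one genuine error. Your claim that ``$\mathcal{Z}^C$ finite forces $c_0=0$'' is false: for the harmonic oscillator ($\lambda_j=2j+1$) and $a_0=1$ one has $\lambda_j c_0\in\Z$ for every $j$, so $\mathcal{Z}^C=\emptyset$ while $c_0=1\neq 0$. What is true is only $b_0=0$. Consequently, for $j\in\mathcal{Z}$ your formula
\[
u_j(t)=i\int_{t_\star}^{t} e^{\lambda_j(B(t)-B(s))}f_j(s)\,ds
\]
solves $\partial_t u_j-\lambda_j b(t)u_j=if_j$, which is the equation for $D_t+ib(t)P$, \emph{not} for $L_{a_0}=D_t+(a_0+ib(t))P$ when $a_0\neq 0$; and the admissibility condition \eqref{admissible_condition} for $L_{a_0}$ involves the factor $e^{i\lambda_j a_0 t}$, so your path-independence argument is not literally correct either. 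The paper handles this in two ways: when $\mathcal{Z}=\N$ it invokes Proposition~\ref{LtoL-0}(c) to conjugate $L_{a_0}$ to $L_b$, and in the general case $\mathcal{Z}^C$ finite it simply keeps the phase $i\lambda_j a_0(s-t)$ inside the exponent of the integrand, which has modulus~$1$ and does not affect the bound. Either fix repairs your argument with no further change. (Your final appeal to Proposition~\ref{LtoL-0}(b) is extraneous: the theorem is already about $L_{a_0}$.)

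A minor overstatement: connectedness of all $\Omega_r$ does not force strict unimodality (plateaus are allowed), but you only use the weaker consequence that $\{B\geq B(t)\}$ and $\{B\leq B(t)\}$ are arcs containing $t_M$ and $t_m$ respectively, and that much is correct.
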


\begin{proof}
	Under condition  (a), $L_{a_0} = D_t + a_0P$ and we may apply  Theorem   \ref{GS-time-ind}.  Case (b) is a consequence of Theorem \ref{GH-time-dep} and Proposition \ref{GHimplGS}. To prove (c) we first assume $\mathcal{Z}=\N$. In this case, 
	$b_0=0$ and that by Proposition \ref{LtoL-0} it is  sufficient to consider 
	$$
	L_b = D_t + ib(t)P(x,D_x).
	$$
	
	Let $f \in \mathscr{E}_{L_b,\mu} = [\mbox{ker}(^t L_b)]^{\circ}$ be fixed. Since 
	\begin{equation*}
		\int_{0}^{2\pi} \exp\left(-\lambda_{j}\int_{0}^{t} b(r)dr\right)f_j(t) dt = 0, \forall j \in \mathbb{N}.
	\end{equation*}
	we obtain that the functions 
	\begin{equation}\label{solution-t_j}
		u_j(t) = i\exp\left(  \lambda_j\int_{0}^{t}  b(r) dr \right)   \int_{t_j}^{t}\exp\left(-\lambda_j \int_{0}^{s} b(r)dr \right) f_j(s)ds,
	\end{equation}
	for any $t_j \in \T$, 		belong to $\mathcal{G}^{\sigma}(\T)$  and solve
	\begin{equation}\label{ode_solution-t_j}
		D_t u_j(t) + \lambda_j ib(t) u_j(t) = f_j(t).
	\end{equation}

	We assume that $\lambda_{j}>0$ (the general case is analyzed in Remark \ref{lambda-I}).
	By defining 
	\begin{equation}\label{r_t}
		r_{t} =  \int_{0}^{t} b(r)dr, \ t \in \T
	\end{equation}
	we obtain that the set
	\begin{equation*}
		\Theta_{t}  = 
		\left\{s \in \T; \  \int_{0}^{s} b(\tau)d\tau \geq r_{t} \right\}
	\end{equation*}
	is connected. Therefore, if  $t_1 \in \T$ is given by 
	\begin{equation*}
		\int_{0}^{t_1}  b(r)dr = \max_{t \in \mathbb{T}} \left\{  \int_{0}^{t} b(r)dr\right\} 
	\end{equation*}
	then  $t, t_1 \in \Theta_{t}$ and there is an arc $\gamma_{t,t_1} \subset \Theta_{t}$ joining $t$ and $t_1$ implying 
	\begin{equation*}
		\int_{0}^{s} b(r)dr \geq 
		\int_{0}^{t}  b(r)dr, \ \forall s \in \gamma_{t,t_1}.
	\end{equation*}
	
	Then, 
	\begin{equation}\label{solution-arc-t_j}
		u_j(t) = i \int_{\gamma_{t,t_1}}\exp\left[\lambda_j\left(\int_{0}^{t}  b(r) dr-  \int_{0}^{s} b(r)dr\right) \right] f_j(s)ds
	\end{equation} 
	is a solution of \eqref{ode_solution-t_j}.  Moreover,  
	$$
	\lambda_j\left(\int_{0}^{t}  b(r) dr-  \int_{0}^{s} b(r)dr\right) \leq 0, \ \forall s,t \in \gamma_{t},
	$$
	implying that the exponential term in \eqref{solution-arc-t_j} is bounded by one. 
	
	The estimates for $u_j(t)$ are obtained by using Leibniz rule and Fa\`a di Bruno formula. Then,  we   get $\{u_{j}\}  \rightsquigarrow u \in   \mathscr{F}_{\mu}$.  
	
	For the general case $\mathcal{Z}\neq \N$, consider operator 
	$$
	L_{a_0} = D_t + (a_0 + ib(t))P,
	$$
	and $f \in \mathscr{E}_{L_{a_0},\mu}$. If $j \in \mathcal{Z}^{C}$ we define $u_j$ by expression \eqref{Solu-1}.  Since  $\mathcal{Z}^{C}$ is finite, estimates are unnecessary. 	Now, for $j \in \mathcal{Z}$ we replace \eqref{solution-arc-t_j} by 
	\begin{equation*}
		u_j(t) = i \int_{\gamma_{t,t_1}}\exp\left[\lambda_j\left(i a_0(s-t) + \int_{0}^{t}  b(r) dr -  \int_{0}^{s}b(r) dr\right) \right] f_j(s)ds
	\end{equation*} 
	and proceed as before.

\end{proof}

\begin{remark}\label{lambda-I}
	We point out that with a slight modification in the previous proof we can cover the general case where $\lambda_{j}$ is not positive. To see this, consider the sets
	$$
	\mathcal{W}_{+}=\{j \in \N; \, \lambda_{j}>0\} \ \textrm{ and } \ 
	\mathcal{W}_{-}=\{j \in \N; \, \lambda_{j}<0\}.	
	$$
	
	\begin{itemize}
		\item If $\mathcal{W}_{-}$ is finite: we use \eqref{Sol-6} as a solution of equation \eqref{ode_solution-t_j} when $j \in \mathcal{W}_{-}$ and \eqref{solution-arc-t_j} for $j \in \mathcal{W}_{+}$.
		
		\item If $\mathcal{W}_{+}$ is finite: we take \eqref{Sol-6} as a solution of  \eqref{ode_solution-t_j} if $j \in \mathcal{W}_{+}$, while in case $j \in \mathcal{W}_{-}$ we proceed as follows: let $r_{t}$ be as in \eqref{r_t} and consider the  connected set
		$$
		\widetilde{\Theta}_{t}  = 
		\left\{s \in \T; \  \int_{0}^{s} b(\tau)d\tau \leq r_{t} \right\}.
		$$
		
		Choosing  $t_1 \in \T$ as before we have
		$t, t_1 \in 	\widetilde{\Theta}_{t}$ and consequently there is an arc $\widetilde{\gamma}_{t,t_1} \subset \widetilde{\Theta}_{t} $ joining $t$ and $t_1$. In particular, 
		\begin{equation*}
			0 \leq 	\int_{0}^{t}  b(r)dr - \int_{0}^{s} b(r)dr, \ 
			\forall s \in \widetilde{\gamma}_{t,t_1}.
		\end{equation*}
		
		Hence, for $j \in \mathcal{W}_{-}$ we define
		\begin{equation}\label{solution-arc-t_j-negative}
			u_j(t) = i \int_{\widetilde{\gamma}_{t,t_1}}\exp\left[\lambda_j\left(\int_{0}^{t}  b(r) dr-  \int_{0}^{s} b(r)dr\right) \right] f_j(s)ds
		\end{equation} 
		Then,   
		$$
		\lambda_j\left(\int_{0}^{t}  b(r) dr-  \int_{0}^{s} b(r)dr\right) \leq 0, \ \forall s,t \in  \widetilde{\gamma}_{t,t_1},
		$$
		implying that the exponential term in \eqref{solution-arc-t_j-negative} is bounded by one.

		\item If both sets $\mathcal{W}_{+}$ and $\mathcal{W}_{-}$ are infinite: we use \eqref{solution-arc-t_j} for $j \in \mathcal{W}_{+}$ and \eqref{solution-arc-t_j-negative} for $j \in \mathcal{W}_{-}$.
	\end{itemize}
	
\end{remark}

\subsection{Necessary conditions} \label{sec_nec_cond} 

In this section, we investigate the necessity of each condition in Theorem \ref{main_theorem}.

\vskip0.2cm
\begin{proposition}\label{prop_a_0.b_0}
	If $b$ does not change sign and $c_0$ does not satisfy $(\mathscr{A})$, then $L$ is not $\mathcal{S}_{\mu}$-globally solvable.
\end{proposition}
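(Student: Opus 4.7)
The plan is to reduce the statement to the time-independent necessity result already established in Theorem \ref{GS-time-ind}. My first step would be to observe that, under the standing assumption that $b$ does not change sign, the hypothesis that $c_0$ fails condition $(\mathscr{A})$ actually forces $b\equiv 0$. Indeed, if $b$ had constant sign but were not identically zero, then $b_0=(2\pi)^{-1}\int_0^{2\pi}b(t)\,dt$ would inherit the strict sign of $b$, in particular $b_0\neq 0$. A direct splitting of real and imaginary parts then gives
\[
|\tau-c_0\lambda_j|^2=(\tau-a_0\lambda_j)^2+b_0^2\lambda_j^2\geq b_0^2\lambda_j^2
\]
for all $(\tau,j)\in\Z\times\N$. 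By the Weyl asymptotics \eqref{weyl} we have $|b_0\lambda_j|\to\infty$, so the previous inequality dominates $C_\epsilon\exp(-\epsilon j^{1/2n\mu})$ for all sufficiently large $j$ and any $\epsilon>0$; for the finitely many indices with $\lambda_j=0$ the constraint $\tau-c_0\lambda_j\neq 0$ forces $|\tau|\geq 1$, which trivially verifies the bound by choosing $C_\epsilon$ small enough. Thus $(\mathscr{A})$ would hold automatically, contradicting our assumption.

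Once this reduction is in place, $L$ becomes the time-independent operator $\mathcal{L}=D_t+a_0 P(x,D_x)$ studied in Subsection \ref{Sec_time_ind_coef}, corresponding to $\alpha=a_0=c_0$ and $\beta=0$. Since $c_0$ fails $(\mathscr{A})$, the necessity part of Theorem \ref{GS-time-ind} applies verbatim and rules out $\mathcal{S}_\mu$-global solvability: from the failure of $(\mathscr{A})$ together with Lemma \ref{lt2} one extracts a sequence $(j_\ell,\tau_\ell)$ along which the denominator $|1-e^{-2\pi ic_0\lambda_{j_\ell}}|$ decays faster than any prescribed Gelfand-Shilov rate, and the explicit datum $f$ constructed in the proof of Theorem \ref{GS-time-ind} lies in $\mathscr{E}_{L,\mu}$ yet forces $|u_{j_\ell}(0)|>1$, in contradiction with the decay estimate \eqref{deccoeff}.

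I expect no substantive obstacle here. All the delicate work (the Lemma \ref{lt2} equivalence and the construction of the counterexample datum) is already packaged inside Theorem \ref{GS-time-ind}; the only new ingredient is the one-line verification that $b_0\neq 0$ makes $(\mathscr{A})$ free of charge, which isolates the genuinely constant-coefficient situation $b\equiv 0$ as the sole case in which the proposition carries nontrivial content.
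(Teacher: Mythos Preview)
Your reduction is sound, but there is one missing step. After you conclude $b\equiv 0$, the operator is $L=D_t+a(t)P$, which is \emph{not} time-independent: the real part $a(t)$ may still vary. So Theorem \ref{GS-time-ind} does not apply ``verbatim'' to $L$. The fix is immediate and already available in the paper: invoke Proposition \ref{LtoL-0} (or directly Proposition \ref{Prop-conjugation}) to pass from $L$ to its normal form $L_{a_0}=D_t+(a_0+ib(t))P$, which with $b\equiv 0$ is exactly the constant-coefficient operator $D_t+a_0P$. Then Theorem \ref{GS-time-ind} (necessity part, $\beta=0$, $\alpha=a_0$ failing $(\mathscr{A})$) rules out solvability of $L_{a_0}$, hence of $L$.

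With that correction your argument is complete, and it is genuinely different from the paper's proof. The paper does not first isolate the case $b\equiv 0$; instead it works with a general sign-definite $b\geq 0$, builds an explicit cutoff-based datum $f_{j_\ell}$ supported near $t=\pi/2$, and shows directly that $|u_{j_\ell}(\pi)|\geq \delta$ using the formula \eqref{Solu-1} together with the nonnegativity of $\lambda_{j_\ell}\int_{\pi-s}^\pi b(r)\,dr$. Your observation that $b_0\neq 0$ already forces $(\mathscr{A})$ shows that the paper's treatment of the case $b\not\equiv 0$ is in fact vacuous under the hypothesis, so your route is the more economical one: it recycles Theorem \ref{GS-time-ind} and the normal-form machinery rather than building a second counterexample from scratch. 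The paper's approach, on the other hand, is self-contained and furnishes an explicit obstruction directly at the level of $L_{a_0}$ without calling back to the constant-coefficient theorem.
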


\begin{proof}
	Assume $b\geq 0$. There is $\epsilon_0>0$ and an increasing sequence $j_\ell$ such that 	
	$$
	0<|1-e^{-2\pi i (a_0 + ib_0)\lambda_{j_{\ell}}}| <
	\exp\left\{-\epsilon_0 j_{\ell}^{1/2n\mu}\right\}, \ \ell \in \N.
	$$
	
	Let  $\phi \in C^{\infty}(\pi/2 - \delta, \pi/2+ \delta)$ be a cutoff function such that $\phi \equiv 1$ in a neighborhood of 
	$(\pi/2 - \delta/2, \pi/2+ \delta/2)$, where $\delta >0$ is such that $(\pi/2 - \delta, \pi/2+ \delta) \subset (0,\pi)$. Consider $f_{j_\ell}(t)$ a $2\pi$-periodic extension of
	$$
	\widetilde{f}_{j_\ell}(t) = \exp\left\{-\epsilon_0 j_{\ell}^{1/2n\mu}\right\}
	\exp\left[i\lambda_{j_\ell}a_0(\pi - t) \right]\phi(t), \ \ell \in \N.
	$$
	and set 
	\begin{equation*}
		f_j(t) = \left\{
		\begin{array}{l}
			0, \ j \neq {j_\ell},  \\
			f_{j_\ell}(t), j = {j_\ell}, 
		\end{array}
		\right.
	\end{equation*}
	for which $\{f_{j}\}  \rightsquigarrow f \in   \mathscr{F}_{\mu}$.
	
	Note that either $b_0 \neq 0,$ or  $b_0 = 0$ and $a_0\lambda_{j_{\ell}} \notin \Z$, for all $\ell \in \N$.
	In both cases, $j_{\ell} \notin \mathcal{Z}$ for all $\ell \in \N$ and $f_{j} \equiv 0$ for $j \in \mathcal{Z}$. It follows from Lemma \ref{charac_kern} that $f \in \mathscr{E}_{L,\mu}$.

	Now, we assume that $\lambda_{j_{\ell}} >0$ for all $\ell$ (See Remark \ref{lambda-II} for the general case).
	If $u \in \mathscr{F}_{\mu}$ is a solution of  $Lu = f$ we obtain
	$$
	u_{j_{\ell}}(t) =  \Gamma_{j_{\ell}} \int_{0}^{2\pi}
	\phi(t-s)
	\exp\left\{\lambda_{j_{\ell}} \int_{t-s}^{t} b(r) dr\right\} ds, 
	$$
	where
	$$
	\Gamma_{j_{\ell}} = i(1-e^{-2\pi i (a_0 + ib_0)\lambda_{j_{\ell}}})^{-1} \exp\left\{-\epsilon_0 j_{\ell}^{1/2n\mu}\right\}\exp\left\{i\lambda_{j_{\ell}}  a_0\right\}.
	$$
	
	In particular, for $t = \pi$,
	$$
	u_{j_{\ell}}(\pi) =  \Gamma_{j_{\ell}} \int_{\frac{\pi - \delta}{2}}^{\frac{\pi + \delta}{2}}
	\exp\left\{\lambda_{j_{\ell}}  \int_{\pi-s}^{\pi} b(r) dr\right\} ds. 
	$$
	
	Since $\lambda_{j_{\ell}}  \int_{\pi-s}^{\pi} b(r) dr\geq 0$,
	$$
	|u_{j_{\ell}}(\pi)| \geq  \delta  |1-e^{2\pi i (a_0 + ib_0)\lambda_{j_{\ell}}}|^{-1} \exp\left\{-\epsilon_0 j_{\ell}^{1/2n\mu}\right\} \geq \delta
	$$
	which is a contradiction. 		
\end{proof}

\begin{remark}\label{lambda-II}
	We can adapt the arguments in order to cover the case where $\lambda_{j_{\ell}}$ are not positive for every $\ell$. Indeed, 	consider
	$$
	\mathcal{W}_{+}=\{\ell \in \N; \, \lambda_{j_{\ell}}>0\} \ \textrm{ and } \ 
	\mathcal{W}_{-}=\{\ell \in \N; \, \lambda_{j_{\ell}}<0\}.	
	$$	
	
	\begin{itemize}
		\item If $\mathcal{W}_{+}$ is infinite, we redefine the sequence $f_j(t)$ as follows:
		\begin{equation*}
			f_j(t) = \left\{
			\begin{array}{l}
				0, \ j \neq {j_\ell}, \textrm{ or } \ell \in \mathcal{W}_{-}, \\
				f_{j_\ell}(t), j = {j_\ell}, \textrm{ and } \ell \in \mathcal{W}_{+}.
			\end{array}
			\right.
		\end{equation*}
		
		\item If $\mathcal{W}_{+}$ is finite, we then consider
		\begin{equation*}
			f_j(t) = \left\{
			\begin{array}{l}
				0, \ j \neq {j_\ell}, \textrm{ or } \ell \in \mathcal{W}_{+}, \\
				f_{j_\ell}(t), j = {j_\ell}, \textrm{ and } \ell \in \mathcal{W}_{-}
			\end{array}
			\right.
		\end{equation*}
		and  
		$$
		u_{j_{\ell}}(t) =  \widetilde{\Gamma}_{j_{\ell}} \int_{0}^{2\pi}
		\phi(t+s)
		\exp\left\{-\lambda_{j_{\ell}} \int_{t}^{t+s} b(r) dr\right\} ds, 
		$$
		where
		$$
		\widetilde{\Gamma}_{j_{\ell}} = i(e^{2\pi i (a_0 + ib_0)\lambda_{j_{\ell}}}-1)^{-1} \exp\left\{-\epsilon_0 j_{\ell}^{1/2n\mu}\right\}\exp\left\{i\lambda_{j_{\ell}}  a_0\right\}.
		$$
		Since $-\lambda_{j_{\ell}}  \int_{\pi-s}^{\pi} b(r) dr\geq 0$, we can proceed as before.
		
	\end{itemize}

\end{remark}

Next we analyze the case when $b$ changes sign. In this case the necessity in item (b) of Theorem \ref{main_theorem} can  be proved in several steps.

\vskip0.2cm
\begin{proposition}\label{prop1-nec-par}
	If $b$ changes sign and $b_0 \neq 0$, then 	$L$ is not $\mathcal{S}_{\mu}$-globally solvable.
\end{proposition}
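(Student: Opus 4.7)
The plan is to disprove solvability by constructing an explicit $f\in\mathscr{E}_{L,\mu}$ whose unique candidate Fourier-wise solution $u=\sum u_j(t)\varphi_j(x)$ has coefficients growing too fast to lie in any $\mathscr{F}_\mu$. By Proposition \ref{LtoL-0} I first replace $L$ with its normal form $L_{a_0}=D_t+(a_0+ib(t))P$, and up to considering the transpose I may assume $b_0>0$. Since $b_0\neq 0$, the condition $\lambda_j c_0\in\Z$ forces $\lambda_j=0$, so $\mathcal{Z}$ is finite and for all but finitely many $j$ equation \eqref{diffe-equations} admits the unique solution \eqref{Solu-1}; its prefactor obeys $|1-e^{-2\pi i\lambda_j c_0}|^{-1}\sim e^{-2\pi\lambda_j b_0}$ when $\lambda_j\to+\infty$ (the case $\lambda_j\to-\infty$ is symmetric via \eqref{Solu-2} and will be handled in a remark analogous to Remark \ref{lambda-II}).

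The key geometric step is to establish the strict inequality
\[
M:=\sup_{0\leq s\leq 2\pi,\, t\in\R}\bigl(B(t)-B(t-s)\bigr)>2\pi b_0,
\]
where $B(t)=\int_0^t b(r)\,dr$ is extended to $\R$ through $B(t+2\pi)=B(t)+2\pi b_0$. This follows from the Taylor expansion $B(t)-B(t-2\pi+\eta)=2\pi b_0-\int_t^{t+\eta}b(r)\,dr=2\pi b_0-\eta b(t)+O(\eta^2)$ for small $\eta>0$: selecting $t$ near a point where $b$ attains its minimum (which is strictly negative because $b$ changes sign) gives a value exceeding $2\pi b_0$. I then fix $(t_0,\tau_0)$ with $0<t_0-\tau_0<2\pi$ and $B(t_0)-B(\tau_0)>2\pi b_0+\delta$ for some $\delta>0$, with $\tau_0$ in the interior of a fundamental domain, a sequence $j_\ell\notin\mathcal{Z}$ with $\lambda_{j_\ell}\to+\infty$, and a nonnegative Gevrey cutoff $\phi\in\mathcal{G}^{\sigma_0}(\T)$, $\sigma_0>1$, supported in a small neighborhood of $0$ with $\int\phi>0$. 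Define
\[
f_{j_\ell}(\tau)=\exp\bigl(-\epsilon_0 j_\ell^{\,1/(2n\mu)}\bigr)\,\exp\bigl(-i\lambda_{j_\ell}A(\tau)\bigr)\,\phi(\tau-\tau_0),\qquad A(\tau)=\int_0^\tau a(r)\,dr,
\]
extended by zero outside the support and then periodically (the support is small enough for this to be unambiguous), with $f_j\equiv 0$ for $j\notin\{j_\ell\}$. Leibniz, Fa\`a di Bruno, and Lemma \ref{lemma-exp-j} applied to absorb the factors $\lambda_{j_\ell}^k\lesssim j_\ell^{km/(2n)}$ against the decaying prefactor produce uniform Gelfand--Shilov estimates yielding $\{f_j\}\rightsquigarrow f\in\mathcal{S}_{\sigma_0+m\mu,\mu}\subset\mathscr{F}_\mu$; since $f_j\equiv 0$ on the finite set $\mathcal{Z}$, the admissibility \eqref{admissible_condition} is trivial and $f\in\mathscr{E}_{L,\mu}$.

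Assuming for contradiction that $u\in\mathscr{F}_\mu$ solves $Lu=f$, the coefficient $u_{j_\ell}$ must be given by \eqref{Solu-1}. The non-periodic phase $\exp(-i\lambda_{j_\ell}A(\tau))$ was inserted in $f_{j_\ell}$ precisely so that, writing $C(\tau)=A(\tau)+iB(\tau)$, the combination $\exp(-i\lambda_{j_\ell}(C(t_0)-C(\tau)))\exp(-i\lambda_{j_\ell}A(\tau))$ collapses to $\exp(-i\lambda_{j_\ell}C(t_0))\exp(-\lambda_{j_\ell}B(\tau))$: the oscillation vanishes and the integrand becomes non-negative. Using $|\exp(-i\lambda_{j_\ell}C(t_0))|=\exp(\lambda_{j_\ell}B(t_0))$, the asymptotics of the prefactor, and a support of $\phi$ small enough that $|B(\tau)-B(\tau_0)|<\delta/3$ on it, one obtains
\[
|u_{j_\ell}(t_0)|\gtrsim\exp\bigl(\lambda_{j_\ell}\bigl(B(t_0)-B(\tau_0)-2\pi b_0\bigr)-\epsilon_0 j_\ell^{\,1/(2n\mu)}\bigr)\geq\exp\bigl(c\,\lambda_{j_\ell}\delta-\epsilon_0 j_\ell^{\,1/(2n\mu)}\bigr).
\]
Since $\lambda_{j_\ell}\sim j_\ell^{m/(2n)}$ and $m/(2n)\geq 1/(2n\mu)$ (equivalent to $m\mu\geq 1$), for $\epsilon_0$ small enough the right-hand side tends to $+\infty$, contradicting the decay \eqref{deccoeff} required for $u\in\mathscr{F}_\mu$. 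The main obstacle lies in the geometric estimate $M>2\pi b_0$, which is where the sign change of $b$ is used beyond the already exploited non-vanishing of $b_0$, together with the careful design of $f_{j_\ell}$: introducing the non-periodic phase $\exp(-i\lambda_{j_\ell}A(\tau))$ is the trick that eliminates the stationary-phase cancellation in \eqref{Solu-1} and lets the exponential amplifier $e^{\lambda_{j_\ell}(B(t_0)-B(\tau_0))}$ overwhelm the damping factor $e^{-2\pi\lambda_{j_\ell}b_0}$ of the denominator.
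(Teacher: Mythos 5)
Your proof is correct, and while the overall strategy is the same as the paper's (build an explicit admissible $f$ concentrated near a point, plug it into the unique solution formula \eqref{Solu-1} for $j\notin\mathcal{Z}$, and contradict the decay \eqref{deccoeff}), the quantitative mechanism is genuinely different. The paper takes $M=\max_{s,t}\int_{t-s}^{t}b\,(>0$ by the sign change$)$, normalizes the amplitude of $f_j$ by $e^{-\lambda_jM}$, and then extracts a contradiction from a Laplace-method lower bound: the phase $\psi(s)=M-\mathcal{H}(s,t^*)$ has a zero of even order $2k$ at $s^*$, so $|u_j(t^*)|\gtrsim|\Theta_j|\,\lambda_j^{-1/(2k)}$ decays only polynomially, which is incompatible with \eqref{deccoeff} provided $|\Theta_j|$ is bounded below. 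You instead keep the borderline amplitude $e^{-\epsilon_0 j^{1/(2n\mu)}}$ and prove the strict inequality $\sup_{0\le s\le 2\pi}\bigl(B(t)-B(t-s)\bigr)>2\pi b_0$ (your Taylor computation at $s=2\pi-\eta$ near a point where $b<0$ is exactly where the sign change enters), so that the integral in \eqref{Solu-1} grows like $e^{\lambda_j(2\pi b_0+\delta')}$ and outright overwhelms the exponentially small prefactor $|1-e^{-2\pi i\lambda_jc_0}|^{-1}\sim e^{-2\pi\lambda_jb_0}$; the lower bound on the integral is then trivial (the de-oscillated integrand is bounded below on the fixed support of $\phi$), so no stationary-phase analysis is needed. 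The two routes are in fact complementary: the paper's bound $0<C\le|\Theta_j|$ holds when $\lambda_jb_0<0$, which is precisely the configuration where your exponential-amplification argument is unnecessary, while your $M>2\pi b_0$ estimate is what rescues the case $\lambda_jb_0>0$; your version is more elementary and makes the role of the sign change transparent, whereas the paper's Laplace-type argument recycles verbatim for Proposition \ref{prop_b_chang} where $b_0=0$. One cosmetic point: the reduction ``up to considering the transpose I may assume $b_0>0$'' is better justified by the change of variable $t\mapsto -t$ (which sends $b$ to $-b(-\cdot)$ and preserves both the sign change and global solvability), or simply by running the easier bounded-prefactor argument when $b_0\lambda_{j_\ell}<0$; as stated it is imprecise but not a gap.
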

\begin{proof}	
	Assume, without loss of generality, that $\lambda_{j} > 0$ for all $j \in \N$. We set
	$$
	\mathcal{H}(s,t) = \int_{t-s}^{t}b(\tau)d\tau, \ s,t \in [0,2\pi],
	$$
	and 
	$$
	M = \max_{s,t \in [0,2\pi]} H(s,t) = H(s^*, t^*). 
	$$
	
	Since 	$b$ changes sign we have $M>0$. In particular, we may assume $s^*\neq 0$, $s^* \neq t^*$ and
	$$
	0< s^*, t^*, \gamma <2\pi,
	$$	
	where $\gamma = t^* - s^*$.
	
	Consider $\delta>0$ such that $(\gamma - \delta, \gamma + \delta)\subset (0,t^*)$ and fix a cutoff function $\phi\in C_c^{\infty}(\gamma - \delta, \gamma + \delta)$ satisfying $0\leq \phi \leq 1$ and $\phi \equiv 1$ on a neighborhood of interval
	$(\gamma - \delta/2, \gamma + \delta/2)$.
	Let $f_j(t)$ be a $2\pi$-periodic extension of
	$$
	\widetilde{f}_j(t) = \exp\left(-\lambda_{j} M\right)
	\exp\left[i\lambda_{j}a_0 (t^*-t)\right]\phi(t), \ j \in \N.
	$$
	
	Since $M>0$, we get $\{f_{j}\}  \rightsquigarrow f \in   \mathscr{F}_{\mu}$. We claim that $f \in [\mbox{ker}(^tL)]^{\circ}$. Indeed, by $b_0\neq0$ we get $\mathcal{Z} = \emptyset$. Then, it follows from  Lemma \ref{charac_kern} that  $\omega \in \mbox{ker}(^tL)$ if and only if $\omega_j(t) \equiv 0$ for all $j \in \N$. 
	
	Next, we show that there is not $u \in \mathscr{F}_{\mu}$ such that $Lu = f$. To verify this we proceed by a contradiction argument: if there is such $u$  we would have
	$$
	u_j(t) =  \Theta_j \int_{t-\gamma-\delta}^{t-\gamma+\delta}
	\phi(t-s)
	\exp\left\{\lambda_{j} \left[-M + ia_0(t^*-t) + \int_{t-s}^{t} b(\tau) d\tau\right]\right\} ds, 
	$$
	where $\Theta_j = i\left(1 - e^{-  2 \pi i\lambda_j c_0}\right)^{-1}$. In particular, 
	\begin{equation*}
		u_j(t^*)  =  \Theta_j \int_{s^* -\delta}^{s^* +\delta}
		\phi(t^*-s)
		\exp\left\{-\lambda_{j} \left[M  -\mathcal{H}(s,t^*)\right]\right\} ds.
	\end{equation*}
	
	Since $b_0\neq 0$, there is a constant $0<C\leq |\Theta_j|$, for all $j$. Also, by the hypotheses on $\phi$:
	\begin{equation*}
		|u_j(t^*)| \geq C \int_{s^* -\delta/2}^{s^* +\delta/2}
		\exp\left\{-\lambda_{j} \left[M  -\mathcal{H}(s,t^*)\right]\right\} ds.
	\end{equation*}

	The function $\psi(s) = M  -\mathcal{H}(s,t^*)$ is positive with $\psi(s^*)=0$. Then,  $s^*$ is zero of even order and  there exists $k \in \N$ such that
	$$
	\psi^{(2k)}(s^*) \neq 0, \ \textrm{ and } \ 
	\psi^{(n)}(s^*)=0, \ n=1, \ldots, 2k-1.
	$$
	
	By Taylor's formula  we obtain $\eta \in (s^*-\delta/2, s^*+\delta/2)$
	satisfying
	$$
	\psi(s) = \dfrac{\psi^{(2k)}(s^*)}{(2k)!}(s-s^*)^{2k} + R_{2k+1}(s),
	$$
	for $s \in (s^*-\delta/2,s^*+\delta/2)$, where 
	$$
	R_{2k+1}(s) = \dfrac{\psi^{(2k+1)}(\eta)}{(2k+1)!}(s-s^*)^{2k+1}.
	$$
	
	Denoting
	$$
	C_1 = \left|\dfrac{\psi^{(2k+1)}(\eta)}{(2k+1)!}\right|
	\ \textrm{ and } \
	C_2 = \left|\dfrac{\psi^{(2k)}(s^*)}{(2k)!}\right|
	$$
	we get
	$$
	|\psi(s) - C_2(s-s^*)^{2k}| \leq C_1 |s-s^*|^{2k+1}
	$$
	and
	$$
	C_2 \left( 1 - \dfrac{C_3 \delta}{2} \right) (s-s^*)^{2k}
	\leq \psi(s) \leq
	C_2 \left( 1 + \dfrac{C_3 \delta}{2} \right) (s-s^*)^{2k},
	$$
	with $C_3 = C_1/C_2$. Therefore, there is $\beta>0$ such that
	\begin{equation*}
		|\psi(s)| \leq \beta (s-s^*)^{2k}, \ 	s \in (s^*-\delta/2,s^*+\delta/2).
	\end{equation*}

	Note that, 
	\begin{align*}
		\int_{s^* -\delta/2}^{s^* +\delta/2}
		\exp\left\{-\lambda_{j} \psi(s)\right\} ds  &  \geq  
		\int_{s^* -\delta/2}^{s^* +\delta/2}
		\exp\left\{-\lambda_{j} \beta (s-s^*)^{2k}\right\} ds \\
		& =
		\int_{-\frac{\delta}{2}}^{\frac{\delta}{2}}
		\exp\left\{-\lambda_{j}\beta s^{2k}\right\} ds \\
		& =
		\dfrac{1}{\lambda_{j}^{1/(2k)}}\dfrac{1}{\beta^{1/(2k)}}
		\int_{-\frac{\delta}{2}(\lambda_{j}\beta)^{1/(2k)}}
		^{\frac{\delta}{2}(\lambda_{j}\beta)^{1/(2k)}}
		\exp\left\{-s^{2k}\right\} ds.
	\end{align*}
	
	Note that
	$$
	\int_{-\infty}^{+\infty}\exp\left\{-s^{2k}\right\} ds = \dfrac{1}{k}\Gamma\left(\dfrac{1}{2k}\right)>0,
	$$
	where $\Gamma$ denotes the gamma function. Hence, there is $j_0 \in \N$ such that
	$$
	\int_{-\infty}^{+\infty}\exp\left\{-s^{2k}\right\} ds\geq\int_{-\frac{\delta}{2}(\lambda_{j}\beta)^{1/(2k)}}
	^{\frac{\delta}{2}(\lambda_{j}\beta)^{1/(2k)}}
	\exp\left\{-r^{2k}\right\} dr \geq \epsilon>0, \ \forall j\geq j_0,
	$$
	implying
	$$
	\lambda_{j}^{1/k}|u_j(t^*)| \geq 
	\lambda_{j}^{1/(2k)}\epsilon \beta^{-1/(2k)},  \forall j\geq j_0.
	$$

	Finally, given $N>0$ we obtain from \eqref{weyl}  some $j_1\geq j_0$ such that
	$$
	\lambda_{j}^{1/k}|u_j(t^*)| \geq 
	\lambda_{j}^{1/(2k)}\epsilon \beta^{-1/(2k)}\geq N,  \forall j\geq j_1.
	$$
	Hence, 
	$$
	\lim_{j \to \infty} \lambda_{j}^{1/k}|u_j(t^*)| = \infty,
	$$
	which is in contradiction with \eqref{deccoeff}.

\end{proof}

\begin{remark}\label{lambda-III}
	We point out that the assumption $\lambda_{j} > 0$ in the proof can be omitted. As in Remark \ref{lambda-I} we split the analysis considering the sets
	$$
	\mathcal{W}_{+}=\{j \in \N; \, \lambda_{j}>0\} \ \textrm{ and } \ 
	\mathcal{W}_{-}=\{j \in \N; \, \lambda_{j}<0\}.	
	$$
	
	Let us to show how to proceed when $\mathcal{W}_{-}$ is infinite. For $j\in \mathcal{W}_{-}$ we use 
	$$
	\mathcal{G}(s,t) = \int_{t}^{t+s}b(\tau)d\tau, \ s,t \in [0,2\pi],
	$$
	and take $f_j(t)$ as the $2\pi$-periodic extension of
	$$
	\widetilde{f}_j(t) = \exp\left(\lambda_{j} M\right)
	\exp\left[i\lambda_{j}a_0 (t^*-t)\right]\phi(t), \ j \in \N,
	$$
	where $M$ is the max of $\mathcal{G}(s,t)$ on $[0, 2\pi]$.
	
	Now we define the sequence $u_j(t)$ using expression \eqref{Solu-2}. Following the same procedure we get 
	\begin{equation*}
		u_j(t^*)  =  \widetilde{\Theta_j} \int_{s^* -\delta}^{s^* +\delta}
		\phi(t^*-s)
		\exp\left\{\lambda_{j} \left[M  -\mathcal{G}(s,t^*)\right]\right\} ds,
	\end{equation*}
	where $\widetilde{\Theta_j}=\left(e^{2 \pi i\lambda_j c_0} -1\right)^{-1}$ and 
	\begin{align*}
		\int_{s^* -\delta/2}^{s^* +\delta/2}
		\exp\left\{\lambda_{j} \psi(s)\right\} ds \geq 
		\dfrac{1}{(-\lambda_{j})^{1/(2k)}}\dfrac{1}{\beta^{1/(2k)}}
		\int_{-\frac{\delta}{2}((-\lambda_{j})\beta)^{1/(2k)}}
		^{\frac{\delta}{2}((-\lambda_{j})\beta)^{1/(2k)}}
		\exp\left\{-s^{2k}\right\} ds,
	\end{align*}
	implying
	$$
	\lim_{j \to \infty} (-\lambda_{j})^{1/k}|u_j(t^*)| = \infty.
	$$
	
\end{remark}

\begin{proposition}\label{prop_b_chang}
	If $b$ changes sign, $b_0 = 0$ and $\mathcal{Z}^C$ is an infinite set, then $L$ is not $\mathcal{S}_{\mu}$-globally solvable.
\end{proposition}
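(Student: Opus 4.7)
The plan is to reduce to $L_{a_0}$ via Proposition \ref{LtoL-0} and mimic Proposition \ref{prop1-nec-par}, with two modifications forced by the hypothesis $b_0 = 0$: first, the Diophantine factor $\Theta_{j_\ell}$ must be controlled from below by a universal constant rather than by $e^{2\pi b_0 \lambda_j}$; second, indices will be selected from $\mathcal{Z}^C$ so that formula \eqref{Solu-1} applies. Since $b$ changes sign and has zero mean, the primitive $B(t) = \int_0^t b(\tau)d\tau$ is $2\pi$-periodic and non-constant, so the number $M = \max_{s,t}(B(t) - B(t-s)) > 0$ is attained at some $(s^*,t^*)$ with $0 < s^*, t^*, t^*-s^* < 2\pi$. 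The hypothesis that $\mathcal{Z}^C$ is infinite, combined with the Weyl asymptotics \eqref{weyl}, lets me extract an infinite subsequence $(j_\ell)\subset \mathcal{Z}^C$ with $\lambda_{j_\ell}$ of constant sign; I treat $\lambda_{j_\ell}>0$, the opposite case being handled through the alternative formula \eqref{Solu-2} exactly as in Remark \ref{lambda-III}.

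Fix a Gevrey cutoff $\phi \in \mathcal{G}^{\sigma_0}(\T)$, $\sigma_0>1$, supported in a small neighborhood of $t^*-s^*$ and with $\phi \equiv 1$ on a smaller one, and set
\[
f_{j_\ell}(t) = \exp(-\lambda_{j_\ell} M)\exp\bigl(i\lambda_{j_\ell} a_0(t^*-t)\bigr)\phi(t), \qquad f_j \equiv 0 \text{ for } j \notin \{j_\ell\}.
\]
Since $f_j=0$ whenever $j \in \mathcal{Z}$, Lemma \ref{charac_kern} gives $f \in \mathscr{E}_{L,\mu}$. The required Gevrey estimates on $\partial_t^\gamma f_{j_\ell}$ with exponential decay in $j_\ell^{1/(2n\mu)}$ are obtained by splitting the damping $\exp(-\lambda_{j_\ell} M)$ into two halves: one absorbs the $\lambda_{j_\ell}^\gamma$ generated by differentiating the oscillatory factor through the elementary bound $\lambda^\gamma e^{-\lambda M/2} \leq C^\gamma \gamma!$, while the other yields $\exp(-c j_\ell^{m/(2n)}) \leq \exp(-c j_\ell^{1/(2n\mu)})$, the last inequality being a consequence of $m\mu \geq 1$. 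Hence $\{f_j\} \rightsquigarrow f \in \mathscr{F}_\mu$.

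Assume, for contradiction, that $u \in \mathscr{F}_\mu$ satisfies $L_{a_0}u = f$. Since $j_\ell \notin \mathcal{Z}$, the coefficient $u_{j_\ell}$ is uniquely given by \eqref{Solu-1}; inserting the explicit form of $f_{j_\ell}$ and evaluating at $t = t^*$ collapses the oscillatory phases and produces
\[
u_{j_\ell}(t^*) = \Theta_{j_\ell}\int_0^{2\pi}\exp\bigl(-\lambda_{j_\ell}\psi(s)\bigr)\phi(t^*-s)\,ds, \qquad \psi(s) := M - \bigl(B(t^*) - B(t^*-s)\bigr),
\]
whose integrand is non-negative, so no cancellation occurs. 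By construction $\psi \geq 0$ and $\psi(s^*)=0$; Taylor expanding at $s^*$ to even order $2k$ and performing a Laplace-type rescaling — verbatim as in the proof of Proposition \ref{prop1-nec-par} — gives
\[
|u_{j_\ell}(t^*)| \;\geq\; c\,|\Theta_{j_\ell}|\,\lambda_{j_\ell}^{-1/(2k)}.
\]
Because $a_0 \in \R$, the estimate $|\Theta_{j_\ell}| = |1-e^{-2\pi i a_0 \lambda_{j_\ell}}|^{-1} \geq 1/2$ is unconditional, so the last inequality is a polynomial lower bound $|u_{j_\ell}(t^*)| \gtrsim j_\ell^{-m/(4nk)}$, incompatible with the exponential decay \eqref{deccoeff} forced by $u \in \mathscr{F}_\mu$. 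This contradiction proves the proposition.

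The principal technical obstacle is the bookkeeping required to verify $f \in \mathscr{F}_\mu$ in the presence of the large oscillation $\exp(i\lambda_{j_\ell} a_0(t^*-t))$, handled through the inequality $\lambda^\gamma e^{-\lambda M/2} \leq C^\gamma \gamma!$ and Lemma \ref{lemma-exp-j}. Conceptually, the new feature relative to Proposition \ref{prop1-nec-par} is that the reality of $a_0$ renders the lower bound on $|\Theta_{j_\ell}|$ automatic, so the argument succeeds whether or not $a_0$ enjoys any Diophantine property; the order $2k$ of vanishing of $\psi$ at $s^*$ is immaterial provided it is finite, since the resulting polynomial bound suffices to contradict exponential decay.
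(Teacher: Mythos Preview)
Your proof is correct and follows essentially the same approach as the paper's: select indices from $\mathcal{Z}^C$, define the same test coefficients $f_{j_\ell}(t)=\exp(-\lambda_{j_\ell}M)\exp\bigl(i\lambda_{j_\ell}a_0(t^*-t)\bigr)\phi(t)$, and repeat the Laplace-type lower bound of Proposition~\ref{prop1-nec-par} verbatim. Your write-up is slightly more explicit than the paper's in two places---the reduction to $L_{a_0}$ via Proposition~\ref{LtoL-0} and the universal bound $|\Theta_{j_\ell}|\geq 1/2$ stemming from $b_0=0$---but these are clarifications of the same argument rather than a different route.
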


\begin{proof}
	Let	$\lambda_{j_\ell}$ be a subsequence such that $j_\ell > \ell$ and  $\lambda_{j_{\ell}}a_0 \notin \Z$. Consider $\mathcal{H}(s,t)$ and  $M$ be as in the proof of Proposition \ref{prop1-nec-par}. Also, we use $t^*, s^*, \gamma$ and $\phi(t)$ as before and set by 
	$f_{j_\ell}(t)$  a $2\pi$-periodic extension of
	$$
	\widetilde{f}_{j_\ell}(t) = \exp\left(-\lambda_{j_\ell} M\right)
	\exp\left[i\lambda_{j_\ell}a_0 (t^*-t)\right]\phi(t), \ \ell \in \N.
	$$
	Therefore, defining $f_{j}(t) \equiv 0$, if $j \neq j_\ell$, we get
	$\{f_{j}(t)\}  \rightsquigarrow f \in   [\mbox{ker}(^tL_{b})]^{\circ}$.
	
	If $Lu=f$, then
	$$
	u_{j_\ell}(t) =  \Theta_{j_\ell}\int_{t-\gamma-\delta}^{t-\gamma+\delta}
	\phi(t-s)
	\exp\left\{\lambda_{j_\ell} \left[-M + ia_0(t^*-t) + \int_{t-s}^{t} b(\tau) d\tau\right]\right\} ds, 
	$$
	where $\Theta_j = i\left(1 - e^{-  2 \pi i\lambda_j a_0}\right)^{-1}$, and
	\begin{equation*}
		u_{j_\ell}(t^*)  =  \Theta_{j_\ell} \int_{s^* -\delta}^{s^* +\delta}
		\phi(t^*-s)
		\exp\left\{-\lambda_{j_\ell} \left[M  -\mathcal{H}(s,t^*)\right]\right\} ds
	\end{equation*}
	implying
	\begin{equation*}
		|u_{j_\ell}(t^*)| \geq C \int_{s^* -\delta/2}^{s^* +\delta/2}
		\exp\left\{-\lambda_{j_\ell} \left[M  -\mathcal{H}(s,t^*)\right]\right\} ds.
	\end{equation*}
	
	Finally, we can proceed as in the proof of Proposition \ref{prop1-nec-par}.
\end{proof}

Combining Propositions \ref{prop1-nec-par} and \ref{prop_b_chang} we obtain that if $L$ is $\mathcal{S}_{\mu}$-globally solvable, then $b_0=0$ and $\mathcal{Z}^C$ is finite. To conclude the proof of the necessity in Theorem \ref{main_theorem} it is now sufficient to prove the next result.

\vskip0.2cm
\begin{theorem}\label{Theorem_nec_Omega}
	Assume that $b$ changes sign and $\mathcal{Z}$ is infinite. If   $\Omega_{r}$ is not connected for some $r \in \R$, then $L$ is not $\mathcal{S}_{\mu}$-globally solvable for every $\mu \geq \frac12$.
\end{theorem}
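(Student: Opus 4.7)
The plan is to construct a single admissible $f \in \mathscr{E}_{L_{a_0},\mu}$ for which the family of Fourier-mode solutions $u_{j_\ell}$ cannot be jointly matched to Gelfand-Shilov decay, thereby ruling out $u \in \mathscr{F}_\mu$ with $L_{a_0}u=f$; by Proposition \ref{LtoL-0}(b) this is equivalent to the non-solvability of $L$. Write $B(t) = \int_0^t b(s)\,ds$. The hypothesis that $\mathcal{Z}$ is infinite, combined with $|\lambda_j|\to\infty$, forces $b_0=0$, so $B$ is $2\pi$-periodic; it also yields an increasing sequence $\{j_\ell\}\subset\mathcal{Z}$ along which $\lambda_{j_\ell}\to+\infty$ (the case $-\infty$ is symmetric, via $B\mapsto -B$). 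Since $\Omega_{r_0}$ is disconnected, I fix two distinct connected components $I_1,I_2$ of $\Omega_{r_0}$, set $M_k = \max_{\overline{I_k}} B > r_0$, attained at $t_k^* \in I_k$, and relabel so that $M_1 \leq M_2$.

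The key ansatz is the identity
\[
L_{a_0}\bigl[e^{-i\lambda_j a_0 t + \lambda_j B(t)} w(t)\bigr] = -i\, e^{-i\lambda_j a_0 t + \lambda_j B(t)}\,w'(t),
\]
which I would verify by direct calculation; for $j_\ell\in\mathcal{Z}$ the factor $e^{-i\lambda_{j_\ell} a_0 t + \lambda_{j_\ell} B(t)}$ is $2\pi$-periodic and spans the kernel of the $j_\ell$-th mode ODE. Fix $\epsilon>0$ with $r_0+2\epsilon<M_1$ and choose $\chi\in C_c^\infty(\T)$ supported in the component of $\{B>r_0+\epsilon\}$ through $t_1^*$ (which lies inside $I_1$), with $\chi\equiv 1$ on the component of $\{B\geq r_0+2\epsilon\}$ through $t_1^*$, so that $\supp\chi'\subset\{r_0+\epsilon<B<r_0+2\epsilon\}$. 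I then set
\[
f_{j_\ell}(t) := L_{a_0}\bigl[e^{-i\lambda_{j_\ell} a_0 t + \lambda_{j_\ell}(B(t)-M_1)}\chi(t)\bigr] = -i\, e^{-i\lambda_{j_\ell} a_0 t + \lambda_{j_\ell}(B(t)-M_1)}\chi'(t),
\]
and $f_j\equiv 0$ otherwise. Admissibility is automatic, since $\int_0^{2\pi} e^{i\lambda_{j_\ell} a_0 t - \lambda_{j_\ell} B(t)} f_{j_\ell}(t)\,dt = -i\,e^{-\lambda_{j_\ell} M_1}\int_0^{2\pi}\chi'(t)\,dt = 0$. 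On $\supp\chi'$ one has $|f_{j_\ell}(t)|\leq |\chi'(t)|\,e^{-\lambda_{j_\ell}(M_1-r_0-2\epsilon)}$, and since $\lambda_{j_\ell}\sim \rho\, j_\ell^{m/(2n)}$ with $m\mu\geq 1$, this exponential gain dominates the required $e^{-\epsilon j_\ell^{1/(2n\mu)}}$; Fa\`a di Bruno combined with Lemma \ref{lemma-exp-j} would extend this control to $\partial_t^k f_{j_\ell}$ for all $k$, producing $\{f_j\}\rightsquigarrow f\in\mathscr{E}_{L_{a_0},\mu}$.

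To close the argument, suppose $u\in\mathscr{F}_\mu$ satisfies $L_{a_0}u=f$. The general $2\pi$-periodic $j_\ell$-th mode reads
\[
u_{j_\ell}(t) = e^{-i\lambda_{j_\ell} a_0 t + \lambda_{j_\ell}(B(t)-M_1)}\chi(t) + C_{j_\ell}\,e^{-i\lambda_{j_\ell} a_0 t + \lambda_{j_\ell} B(t)}, \qquad C_{j_\ell}\in\C.
\]
Evaluating at $t_1^*$ gives $|u_{j_\ell}(t_1^*)| = |1+C_{j_\ell}e^{\lambda_{j_\ell}M_1}|$, so the required Gelfand-Shilov decay forces $|C_{j_\ell}|\geq (1-o(1))\,e^{-\lambda_{j_\ell}M_1}$. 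Evaluating at $t_2^*\in I_2$, where $\chi=0$, then yields
\[
|u_{j_\ell}(t_2^*)| = |C_{j_\ell}|\,e^{\lambda_{j_\ell}M_2} \geq (1-o(1))\,e^{\lambda_{j_\ell}(M_2-M_1)} \geq 1-o(1),
\]
contradicting $|u_{j_\ell}(t_2^*)|\to 0$. The main technical obstacle is the Gelfand-Shilov estimate on $\partial_t^k f_{j_\ell}$: differentiating the factor $e^{\lambda_{j_\ell}B(t)}$ produces powers $\lambda_{j_\ell}^k$ that must be absorbed by the exponential gain $e^{-\lambda_{j_\ell}(M_1-r_0-2\epsilon)}$ in a Fa\`a di Bruno expansion, and this balance hinges precisely on $m\mu\geq 1$ via Lemma \ref{lemma-exp-j}.
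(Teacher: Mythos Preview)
Your argument is essentially correct but follows a genuinely different route from the paper. The paper proves the theorem by violating a H\"ormander-type functional-analytic inequality (a necessary condition for solvability obtained via a closed-graph argument): it builds two sequences $f_\ell,v_\ell\in\mathcal{S}_{\sigma,\mu}$ such that $\int f_\ell v_\ell$ stays bounded below while $\|f_\ell\|_{\sigma,\mu,A}\cdot\|{}^tLv_\ell\|_{\sigma,\mu,B}\to 0$. You instead produce a single admissible $f\in\mathscr{E}_{L_{a_0},\mu}$ and show directly that the forced mode solutions $u_{j_\ell}$ cannot obey the Gelfand-Shilov decay, by playing the two peaks $t_1^*,t_2^*$ in different components of $\Omega_{r_0}$ against each other. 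Your approach is more elementary (no duality machinery) and makes the obstruction completely explicit; the paper's approach is the standard one in the torus literature and generalizes more readily when no closed-form particular solution is available.

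Three small points to tighten. First, since $f$ must lie in $\mathscr{F}_\mu=\bigcup_{\sigma>1}\mathcal{S}_{\sigma,\mu}$, take $\chi$ in some $\mathcal{G}^\sigma(\T)$ rather than merely $C_c^\infty(\T)$; such Gevrey cutoffs exist for every $\sigma>1$. Second, the assertion $\supp\chi'\subset\{r_0+\epsilon<B<r_0+2\epsilon\}$ does not follow automatically from your conditions: the component $K$ of $\{B>r_0+\epsilon\}$ through $t_1^*$ may contain \emph{other} components of $\{B\geq r_0+2\epsilon\}$ on which $\chi$ is not forced to equal $1$, and $\chi'$ could be nonzero there. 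The fix is immediate: require $\chi\equiv 1$ on all of $\{B\geq r_0+2\epsilon\}\cap K$, so that every $\chi^{(m)}$, $m\geq 1$, is supported in $K\cap\{B<r_0+2\epsilon\}$, where $B-M_1\leq -(M_1-r_0-2\epsilon)<0$; this preserves $\chi(t_1^*)=1$ and $\chi(t_2^*)=0$. Third, for the case $\lambda_{j_\ell}\to-\infty$ you invoke the substitution $B\mapsto -B$; note that this requires knowing that some \emph{sublevel} set of $B$ is disconnected, which on $\T$ follows from the disconnectedness of $\Omega_r$ (the complement of a disconnected open subset of $\T$ has at least two components, and a slight enlargement of the level preserves this), but it is worth saying explicitly.
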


The proof of this theorem relies on the violation of the so-called H\"ormander condition, cf. \cite[Lemma 6.1.2]{H63}, which provides a necessary condition for solvability. We point out that this kind of approach is well known in the literature, as the reader may see in 
\cite{AlbanesePopiv, AlbaneseZanghirati, BDGK15,BDG18,BDG17,BCG21, Hou79, PETRONILHO2005}. Here we adapt the result in \cite{H63} to our functional setting as follows.

\begin{theorem}[Hörmander condition]
	Let $L$ be $\mathcal{S}_{\mu}$-globally solvable. Then, given $\sigma>1$, we obtain that for every $A>0$ and $B>0$ there exist a constant $C>0$ such that
	\begin{equation}\label{H_condition}
		\left | \int f v\right| \leq C \|f\|_{\sigma,\mu,A} \cdot \|^tLv\|_{\sigma,\mu,B}
	\end{equation}
	for all $f \in \mathscr{E}_{L,\mu}=[\mbox{ker}(^tL)]^{\circ}$ and $v \in \mathscr{F}_{\mu}$ such that 
	$^tLv \in \mathcal{S}_{\sigma,\mu,B}$, where we recall that 
	$$
	\|\varphi\|_{\sigma,\mu,B}=\sup_{M,\gamma \in \N} B^{-M-\gamma}\gamma!^{-\sigma} M^{-m\mu} \|P^M \partial_t^\gamma u\|_{L^2(\mathbb{T}\times \R^n)}.
	$$
	
\end{theorem}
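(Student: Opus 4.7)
The plan is to deduce the stated inequality by the classical duality argument combined with the Banach--Steinhaus theorem, applied to the Banach space $\mathcal{S}_{\sigma,\mu,A}$ rather than to the full inductive limit $\mathscr{F}_{\mu}$. Fix $\sigma>1$ and $A,B>0$ throughout, and set $E_{A} := \mathcal{S}_{\sigma,\mu,A}\cap \mathscr{E}_{L,\mu}$. The first step is to check that $E_{A}$ is a Banach space: by Lemma \ref{charac_kern}, $\mathscr{E}_{L,\mu}$ is the common kernel of the linear functionals $f \mapsto \int_{0}^{2\pi} \exp\bigl(i\lambda_{k}\int_{0}^{t}c(s)\,ds\bigr)f_{k}(t)\,dt$, $k\in \mathcal{Z}$, and each of these is continuous on $\mathcal{S}_{\sigma,\mu,A}$; hence $E_{A}$ is closed in the Banach space $\mathcal{S}_{\sigma,\mu,A}$.

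Next, for every $v\in \mathscr{F}_{\mu}$ with $^tLv\in \mathcal{S}_{\sigma,\mu,B}$ I would introduce the linear form $l_{v}(f) := \int fv$. Since $v$ belongs to some $\mathcal{S}_{\sigma'',\mu,C''}\hookrightarrow L^{2}(\T\times \R^{n})$, and since $\|f\|_{L^{2}}\leq \|f\|_{\sigma,\mu,A}$ (immediate from the equivalent norm \eqref{secondnorm} with $M=\gamma=0$), Cauchy--Schwarz shows that each $l_{v}$ is a bounded linear functional on $E_{A}$.

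The heart of the argument is pointwise boundedness of the family $\{l_{v}\}$ as $v$ ranges over the set $\{\,v\colon \|^tLv\|_{\sigma,\mu,B}\leq 1\,\}$. Fix $f\in E_{A}$; since $f\in \mathscr{E}_{L,\mu}$, global solvability yields $u\in \mathscr{F}_{\mu}$ with $Lu=f$. Because $\T$ has no boundary in $t$ and $P$ is formally self-adjoint with real coefficients, integration by parts gives
$$l_{v}(f)=\int(Lu)\,v\,dt\,dx=\int u\cdot {}^tLv\,dt\,dx.$$
Since $u\in \mathcal{S}_{\sigma''',\mu,C'''}\hookrightarrow L^{2}$ and $\|^tLv\|_{L^{2}}\leq \|^tLv\|_{\sigma,\mu,B}$, Cauchy--Schwarz yields $|l_{v}(f)|\leq \|u\|_{L^{2}}\|^tLv\|_{\sigma,\mu,B}\leq \|u\|_{L^{2}}$, which gives the required pointwise boundedness over the unit ball.

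The Banach--Steinhaus theorem applied on the Banach space $E_{A}$ then produces a constant $C=C(\sigma,A,B)>0$ with $\|l_{v}\|_{E_{A}^{*}}\leq C$ uniformly over such $v$, and rescaling (both $l_{v}$ and $^tLv$ are homogeneous of degree one in $v$) yields \eqref{H_condition}. The main technical step I anticipate is verifying that the functionals defining $\mathscr{E}_{L,\mu}$ extend continuously to $\mathcal{S}_{\sigma,\mu,A}$ so that $E_{A}$ is genuinely closed; this reduces to showing continuity of the Fourier-coefficient map $f\mapsto f_{k}$ in the Banach norm, which in turn follows from the $L^{2}$-pairing with the eigenfunction $\varphi_{k}$ together with $\|f\|_{L^{2}}\leq \|f\|_{\sigma,\mu,A}$ and $\|\varphi_{k}\|_{L^{2}}=1$.
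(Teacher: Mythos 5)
Your argument is correct, and it rests on the same two pillars as the paper's own proof: the identity $\int f v=\int u\,{}^{t}Lv$ obtained from a solution $u$ of $Lu=f$ supplied by global solvability, and a Baire-category uniform-boundedness step. The implementations differ. The paper treats $(f,[\phi])\mapsto\int f\phi$ as a bilinear form on $[\mathrm{ker}({}^{t}L)]^{\circ}\times\mathscr{O}$, where $\mathscr{O}$ is the quotient of $\{v\in\mathscr{F}_\mu:\,{}^{t}Lv\in\mathcal{S}_{\sigma,\mu,B}\}$ by its intersection with $\mathrm{ker}({}^{t}L)$, endowed with the seminorms $\|{}^{t}L\phi\|_{\sigma,\mu,k}$; it proves separate continuity and then invokes the corollary of Theorem 34.1 in Tr\`eves (a separately continuous bilinear form on the product of a Fr\'echet space and a metrizable space is continuous) to obtain \eqref{H_condition}. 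You instead freeze $A$, note that $E_{A}=\mathcal{S}_{\sigma,\mu,A}\cap\mathscr{E}_{L,\mu}$ is a closed subspace of the Banach space $\mathcal{S}_{\sigma,\mu,A}$ --- which is justified exactly as you say, via $\|f_{k}\|_{L^{2}(\T)}\leq\|f\|_{L^{2}(\T\times\R^{n})}\leq\|f\|_{\sigma,\mu,A}$ --- and apply Banach--Steinhaus to the family $\{l_{v}\}$ indexed by the ball $\|{}^{t}Lv\|_{\sigma,\mu,B}\leq 1$, the pointwise bound $|l_v(f)|\leq\|u\|_{L^2}$ coming from the same duality identity. The rescaling at the end is legitimate, including the degenerate case ${}^{t}Lv=0$, since then $v\in\mathrm{ker}({}^{t}L)$ and $f\in[\mathrm{ker}({}^{t}L)]^{\circ}$ force $\int fv=0$. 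What your route buys is a more elementary tool (classical uniform boundedness on a Banach space rather than a continuity theorem for bilinear maps between non-normable spaces) and it sidesteps the delicate discussion of the topology that $[\mathrm{ker}({}^{t}L)]^{\circ}$ inherits from the inductive limit $\mathcal{S}_{\sigma,\mu}$; since the statement quantifies over each pair $(A,B)$ separately, nothing is lost by arguing for one $A$ at a time.
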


\begin{proof}
	Consider the spaces
	$$
	\mathscr{G}_{\sigma,\mu}= \{  v \in \mathscr{F}_{\mu}: \,
	^tLv \in \mathcal{S}_{\sigma,\mu,B}\},
	$$
	and 
	$$
	\mathscr{O} = \dfrac{\mathscr{G}_{\sigma,\mu}}{\mathscr{G}_{\sigma,\mu} \cap ker(^tL)}. 
	$$
	Note that $[\mbox{ker}(^tL)]^{\circ}$ is a closed subspace of $\mathcal{S}_{\mu}$ and it can be endowed with the induced topology, while on $\mathscr{O}$ we consider the topology generated by  	the seminorms 
	$$
	\|[\phi]\|_{\mathscr{O}} = \|^tL \phi\|_{\sigma,\mu,k}, \ k \in \N_0.
	$$
	
	Now, let  $B: [\mbox{ker}(^tL)]^{\circ} \times \mathscr{O} \to \C$ be the bilinear form
	$$
	B(f,[\phi]) =  \int_{\T\times \R^n}f(t,x) \phi(t,x) dtdx.
	$$
	
	We observe that $B(\cdot, [\phi])$ is continuous on $[\mbox{ker}(^tL)]^{\circ}$, for every $[\phi] \in 	\mathscr{O}$. Let $f \in [\mbox{ker}(^tL)]^{\circ}$ be fixed and consider $u \in \mathscr{F}_{\mu}$ such that $Lu=f$. Hence, there exists $C=C(u)>0$
	\begin{align*}
		\left|	B(f,[\phi])\right| & = 	\left|\int_{\T\times \R^n}u(t,x) \, ^tL\phi(t,x) dtdx \right|
		= 	\left|<u, ^tL\phi>\right| 
		\leq C \|^tL \phi\|_{\sigma,\mu,k} 
	\end{align*}
	
	Therefore, we have: $[\mbox{ker}(^tL)]^{\circ}$ is a Fr\'echet space;  $\mathscr{O}$ is a metrizable topological vector space; 
	$B$ 	is separately continuous on $[\mbox{ker}(^tL)]^{\circ} \times \mathscr{O}$. Under these conditions, ti follows from   the Corollary of Theorem 34.1 in  \cite{treves} that the bilinear form $B$ is continuous. Then, \eqref{H_condition}  holds true.

\end{proof}

As in \cite{BDG18} (see also \cite[Lemma 2.2]{HouCardoso77}), we make use of the following result.

\vskip0.2cm
\begin{lemma}
	Suppose that there exist $r \in \R$  such that 
	$$
	\widetilde{\Omega}_{r} = \left\{t \in \T; \ -\int_{0}^{t} b(s)ds< r\right\}
	$$
	is not connected. Then, we can find a real number $r_0 <r$ such that $\widetilde{\Omega}_{r_0}$ has two connected components with disjoint closures. Moreover, we can construct  functions $f_0,v_0 \in \mathcal{G}^{\sigma}$, $\sigma >1$, satisfying the following conditions:
	$$
	\int_{0}^{2\pi}f_0(s)ds =0, \ \mbox{supp}(f_0) \cap \widetilde{\Omega}_{r_0} = \emptyset, \ \mbox{supp}(v'_0) \subset \widetilde{\Omega}_{r_0}
	$$
	and 
	$$
	\int_{0}^{2\pi}f_0(s)v_0(s)ds >0.
	$$
	
\end{lemma}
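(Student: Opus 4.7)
Set $\tilde{B}(t) = -\int_0^t b(s)\,ds$; since $b_0 = 0$ in the context of Theorem \ref{Theorem_nec_Omega}, this is a genuine continuous function on $\T$, and $\widetilde{\Omega}_r = \{\tilde{B} < r\}$ is an open subset of $\T$. The plan has three phases: (i) shrink $r$ slightly to $r_0$ to produce two components of $\widetilde{\Omega}_{r_0}$ with disjoint closures; (ii) define $v_0$ as a Gevrey primitive of a bump supported in these two components with zero total mass; (iii) construct $f_0$ as a signed pair of Gevrey bumps placed on the two complementary arcs where $v_0$ is constant.

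\emph{Phase (i).} By hypothesis $\widetilde{\Omega}_r$ has at least two components $U^{(1)}, U^{(2)}$; each is an open arc on which $\tilde{B}$ is strictly less than $r$ while $\tilde{B} = r$ on its boundary. Hence $\tilde{B}$ attains a minimum $m_i < r$ on $\overline{U^{(i)}}$. Choosing any $r_0 \in (\max(m_1,m_2),\, r)$, the set $\widetilde{\Omega}_{r_0} \cap U^{(i)}$ is nonempty, and its closure is contained in $\{\tilde{B} \le r_0\} \cap \overline{U^{(i)}}$, which is disjoint from $\partial U^{(i)} \subset \{\tilde{B} = r\}$ and therefore lies strictly inside the open arc $U^{(i)}$. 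Any two components $\tilde{U}_1 \subset U^{(1)}$ and $\tilde{U}_2 \subset U^{(2)}$ of $\widetilde{\Omega}_{r_0}$ then satisfy $\overline{\tilde{U}_1} \cap \overline{\tilde{U}_2} = \emptyset$.

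\emph{Phases (ii) and (iii).} Non-quasi-analyticity of $\mathcal{G}^\sigma(\T)$ for $\sigma > 1$ provides abundant compactly supported Gevrey cutoffs. Pick $g_0 \in \mathcal{G}^\sigma(\T)$ with $\mathrm{supp}(g_0) \subset \tilde{U}_1 \cup \tilde{U}_2$, $\int_{\tilde{U}_1} g_0 = 1$ and $\int_{\tilde{U}_2} g_0 = -1$, so that $\int_0^{2\pi} g_0 = 0$. Let $A, B$ denote the two open arcs comprising $\T \setminus (\overline{\tilde{U}_1} \cup \overline{\tilde{U}_2})$, orientated so that travelling positively from $B$ one enters $\tilde{U}_1$ first and reaches $A$ after exiting it. Fix a basepoint $t_0 \in B$ and define $v_0(t) := \int_{t_0}^{t} g_0(s)\,ds$. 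Then $v_0 \in \mathcal{G}^\sigma(\T)$, $\mathrm{supp}(v_0') = \mathrm{supp}(g_0) \subset \widetilde{\Omega}_{r_0}$, $v_0 \equiv 0$ on $B$, and $v_0 \equiv 1$ on $A$. Because $\tilde{B} = r_0$ at each point of $\partial \tilde{U}_i$ and $\tilde{B}$ is continuous, there exist small open intervals $J_A \subset A$ and $J_B \subset B$ located just outside the two $\tilde{U}_i$'s on which $\tilde{B} > r_0$; in particular $J_A \cup J_B \subset \widetilde{\Omega}_{r_0}^c$. Choose Gevrey bumps $\psi_A, \psi_B \in \mathcal{G}^\sigma(\T)$ with $\mathrm{supp}(\psi_A) \subset J_A$, $\mathrm{supp}(\psi_B) \subset J_B$, $\int \psi_A = \int \psi_B = 1$, and set $f_0 := \psi_A - \psi_B$. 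Then $\int_0^{2\pi} f_0 = 0$, $\mathrm{supp}(f_0) \cap \widetilde{\Omega}_{r_0} = \emptyset$, and $\int_0^{2\pi} f_0 v_0 = 1 \cdot 1 - 1 \cdot 0 = 1 > 0$.

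The only delicate ingredient is Phase (i), where one must guarantee that the shrinking genuinely separates two chosen components of $\widetilde{\Omega}_r$; this is ensured by the strict inequality $m_i < r$ coming from the openness of $U^{(i)}$ and the continuity of $\tilde{B}$. All remaining steps---existence of Gevrey cutoffs for $\sigma > 1$ and the placement of the small intervals $J_A, J_B$ just outside each $\tilde{U}_i$---are standard consequences of non-quasi-analyticity of $\mathcal{G}^\sigma$ and continuity of $\tilde{B}$.
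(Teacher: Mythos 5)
The paper itself gives no proof of this lemma (it is imported from \cite{BDG18} and \cite[Lemma 2.2]{HouCardoso77}), so your argument has to stand on its own. Phase (i) is correct and complete: the components $U^{(i)}$ of the proper open set $\widetilde{\Omega}_r\subsetneq\T$ are open arcs with $\tilde B=r$ on their boundaries, the strict inequality $m_i<r$ makes $\widetilde{\Omega}_{r_0}\cap U^{(i)}$ nonempty for $r_0\in(\max(m_1,m_2),r)$, and the inclusion $\overline{\widetilde{\Omega}_{r_0}\cap U^{(i)}}\subset\{\tilde B\le r_0\}\cap\overline{U^{(i)}}\subset U^{(i)}$ does give two components of $\widetilde{\Omega}_{r_0}$ with disjoint closures. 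The construction of $g_0$, $v_0$ and the final evaluation $\int f_0v_0=1$ are also fine.

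There is, however, one genuinely flawed step: the existence of the intervals $J_A\subset A$ and $J_B\subset B$. You justify them by writing that since $\tilde B=r_0$ on $\partial\tilde U_i$ and $\tilde B$ is continuous, there are intervals \emph{just outside} the $\tilde U_i$'s on which $\tilde B>r_0$. This does not follow: continuity at a point where $\tilde B$ \emph{equals} $r_0$ gives no lower bound $\tilde B>r_0$ nearby, and nothing prevents $\widetilde{\Omega}_{r_0}$ from having further components accumulating at an endpoint of $\tilde U_1$, so that $\tilde B$ dips below $r_0$ arbitrarily close to $\partial\tilde U_1$ on the outside. Both the stated reason and the claim ``just outside'' are therefore unjustified. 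The conclusion you need is still true, but its proof is precisely the point where the disconnectedness of $\widetilde{\Omega}_r$ (not merely of $\widetilde{\Omega}_{r_0}$) must be invoked a second time: the endpoints of the arc $A$ lie in $\widetilde{\Omega}_r$ (there $\tilde B=r_0<r$), so if one had $\tilde B<r$ on all of $A$, then $\tilde U_1\cup\overline{A}\cup\tilde U_2$ would be a connected subset of $\widetilde{\Omega}_r$ meeting both $U^{(1)}$ and $U^{(2)}$, contradicting that these are distinct components. Hence there is $q_A\in A$ with $\tilde B(q_A)\ge r>r_0$, and by continuity an open interval $J_A\subset A$ around $q_A$ on which $\tilde B>r_0$; the same argument applies to $B$. (Note that the strict gap $r_0<r$ is what turns the single point $q_A$ into an interval, so your choice of $r_0$ is used here as well.) With this replacement your construction is complete.
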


\noindent
\textit{Proof of Theorem \ref{Theorem_nec_Omega}.} Assume for a moment that $\mathcal{Z} =\N$, and consider  $L = D_t + ib(t)P$. 
Let   $r \in \R$ such that  $$\Omega_{r}=\left\{t \in \T : \int_0^t b(s)\, ds >r \right\}= \left\{t \in \T : -\int_0^t b(s)\, ds <-r \right\} $$ is not connected. By the previous Lemma, there is $r_0<-r$ such that
$$
\widetilde{\Omega}_{r_0} = \left\{t \in \T; \ -\int_{0}^{t} b(s)ds< r_0\right\}
$$
has two connected components with disjoint closures, and we can fix
$\epsilon>0$ such that
$$
M = \max_{t \in supp(v_0')}\left\{-\int_{0}^{t}b(s)ds\right\} < \epsilon <r_0,
$$

Define the sequences
$$
f_{\ell}(t,x) = \exp\left\{\lambda_{\ell} 
\left[ \epsilon  + \int_{0}^{t}b(s)ds \right] \right\} f_0(t)\varphi_{\ell}(x)  \in \mathcal{S}_{\sigma, 1/2}
$$
and 
$$
v_{\ell}(t,x) = \exp\left\{-\lambda_{\ell} 
\left[\epsilon + \int_{0}^{t}b(s)ds \right]  \right\} v_0(t) \overline{\varphi_{\ell}}(x) \in \mathcal{S}_{\sigma, 1/2} ,
$$
where $f_0, v_0$ are given in the Lemma. In particular, 
\begin{equation}\label{f_lv_l=0}
	\int_{\T \times \R^n}f_{\ell}(t,x)v_{\ell}(t,x)dtdx = 
	\int_{\T}f_{0}(t)v_{0}(t)dt >0,
\end{equation}
and 
$$
^tLv_{\ell}(t,x) = iv_{0}'(t) 
\exp\left\{-\lambda_\ell \left[\epsilon + \int_{0}^{t}b(s)ds\right]\right\} \overline{\varphi_{\ell}}(x) \in \mathcal{S}_{\sigma, 1/2}.
$$

For each $\ell \in \N$, the $x$-Fourier coefficients of $f_\ell(t,x)$ are given by
\begin{align*}
	f_{\ell,j}(t) & = \left(f_{\ell}(t,\cdot), \varphi_j(\cdot)\right)_{L^{2}(\R^n)} \\
	& = \exp\left\{\lambda_{\ell}\left[\epsilon  + \int_{0}^{t}b(s)ds\right] \right\}f_0(t) 
	\left( \varphi_{\ell}(\cdot), \varphi_j(\cdot)\right)_{L^{2}(\R^n)} \\
	& =
	\left\{
	\begin{array}{l}
		\exp\left\{\lambda_{j}\left[ \epsilon  + \int_{0}^{t}b(s)ds\right]  \right\}f_0(t)\ \ j=\ell, \\
		0, \ j\neq \ell.	
	\end{array}
	\right.
\end{align*}
Then,
$$
\int_{0}^{2\pi}\exp\left(i\lambda_j \int_{0}^{t}ib(s)ds\right) f_{\ell,j}(t)dt = 0, \ \forall j \in\N,
$$
implying 
$f_{\ell}(t,x) \in \mathscr{E}_{L,\mu}$, $\forall \ell \in \N$.

We claim that sequences $f_{\ell}$ and $v_{\ell}$ violate condition \eqref{H_condition}. Indeed, let $\alpha, \beta \in \Z^n_+$ and $\gamma \in \Z_+$. By defining 
$$
\mathcal{H}_{\ell}(t) = \exp\left\{\lambda_\ell \left[ \epsilon  +  \int_{0}^{t}b(s)ds\right] \right\}
$$
we get, for every $M, \gamma \in \N$:
\begin{equation*}
	\sup_{t \in \T}	\|P^M \partial_t^{\gamma} f_{\ell}(t,x)\|_{L^2( \R^n_x)} = |\lambda_\ell|^M 
	\sup_{t \in \T}|\partial_t^{\gamma}[\mathcal{H}_{\ell}(t) f_0(t)] |
\end{equation*}

Set $\Psi_{\ell,\gamma}(t) = \partial_t^{\gamma}[\mathcal{H}_{\ell}(t) f_0(t)]$. 
Since $f_0\in \mathcal{G}^{\sigma, h}$ it follows that
\begin{align*}
	|\Psi_{\ell,\gamma}(t)| & \leq  \sum_{\beta \leq \gamma}  \binom{\gamma}{\beta} \left |\partial_t^{\beta}\mathcal{H}_{\ell}(t)  \partial_t^{\gamma - \beta}f_0(t) \right| \\
	& \leq  \sum_{\beta \leq \gamma}  \binom{\gamma}{\beta} \left |\partial_t^{\beta}\mathcal{H}_{\ell}(t)\right|   C_1 h^{\gamma - \beta}[(\gamma-\beta)!]^{\sigma}, 
\end{align*}
and by  Fa\`a di Bruno formula
$$
\partial_t^{\beta}  \mathcal{H}_{\ell}(t) = \sum_{\Delta(\tau), \, \beta}
\frac{(\lambda_{\ell})^\tau}{\tau!}
\frac{\beta!}{\beta_1! \cdots \beta_\tau! }
\left(\prod_{\nu=1}^\tau
\partial_t^{\beta_\nu-1}b(t) \right) \mathcal{H}_{\ell}(t),
$$	
where
$
\sum\limits_{\Delta(\tau), \, \beta} = \sum\limits_{\tau=1}^\beta \sum\limits_{\stackrel{\beta_1+\ldots+\beta_\tau=\beta}{\beta_\nu \geq 1, \forall \nu}}. 
$
In view of 
$$
\left| \prod_{\nu=1}^\tau
\partial_t^{\beta_\nu-1}b(t)  \right| \leq C_2^{\beta-\tau+1}[(\beta -\tau)!]^\sigma
$$
we obtain
\begin{align*}
	|\Psi_{\ell,\gamma}(t)| & \leq  \sum_{\beta \leq \gamma}  \binom{\gamma}{\beta} \left |\partial_t^{\beta}\mathcal{H}_{\ell}(t)  \partial_t^{\gamma - \beta}f_0(t) \right| \\
	& \leq  \mathcal{H}_{\ell}(t) \sum_{\beta \leq \gamma}  \binom{\gamma}{\beta}   \sum_{\Delta(\tau), \, \beta}
	\frac{|\lambda_{\ell}|^\tau}{\tau!}\cdot
	\frac{\beta!}{\beta_1! \cdots \beta_\tau! }
	C_2^{\beta-\tau+1}[(\gamma -\tau)!]^\sigma   C_1 h^{\gamma - \beta} \\
	& \leq  \mathcal{H}_{\ell}(t) \sum_{\beta \leq \gamma}  \binom{\gamma}{\beta}   \sum_{\Delta(\tau), \, \beta}
	\frac{\rho^{\tau}}{\tau!}\cdot
	\frac{\beta!}{\beta_1! \cdots \beta_\tau! }
	C_2^{\beta-\tau+1}\ell^{\tau m/2n}[(\gamma -\tau)!]^\sigma   C_1 h^{\gamma - \beta},
\end{align*}
where $|\lambda_\ell| \leq \rho \ell^{\, m/2n}$.

We take $s=2n/m$ in Lemma \ref{lemma-exp-j}. Then, for any 
$\eta>0$ there is  $C_{\eta}>0$ such that 
$$ 
\ell^{\tau m/2n} \leq C_{\eta}^{\tau} \tau ! \exp(\eta \ell^{m/2n}), 
$$
implying
\begin{equation*}
	|\partial_t^{\gamma}[\mathcal{H}_{\ell}(t) f_0(t)] |
	\leq C_\eta^{\gamma+1}	\gamma !^\sigma \mathcal{H}_{\ell}(t) \exp(\eta \ell^{m/2n})		
\end{equation*}
for $\ell$ large enough.

Now, applying again Lemma \ref{lemma-exp-j} we obtain that for every $\eta >0$ there exists $C_\eta>0$ such that
$$
|\lambda_\ell^M| \leq C_\eta^{M+1} M!^{m\mu} \exp (\eta \ell^{\frac1{2n \mu}}).
$$

Hence, we can extimate the norm \eqref{secondnorm} of $f_\ell$ as follows:
$$
\|f_{\ell}\|_{\sigma,\mu,C_\eta} \leq C_\eta^2 \exp(2\eta \ell^{1/2n\mu}) \sup_{t \in supp (f_0)}\{\mathcal{H}_{\ell}(t)\}.
$$

By a similar procedure:	
$$
\|^tLv_{\ell}\|_{\widetilde{\sigma},\mu,C'} \leq C^2_\eta \exp(2\eta  \ell^{1/2n\mu}) \sup_{t \in supp (v_0')}\{\mathcal{H}_{\ell}^{-1}(t)\} 
$$
Now we recall that
$$
\rho^*\ell^{m/2n} \leq |\lambda_{\ell}|
\leq \rho\ell^{m/2n}, \qquad \ell \, \, \textit{large}, 
$$	
for some positive constants $\rho, \rho*$. If $\lambda_{\ell}>0$, then
$$
\sup_{t \in supp (f_0)}\{\mathcal{H}_{\ell}(t)\} \leq  \exp (c_1\rho^* \ell^{m/2n}),
$$
and
$$
\sup_{t \in supp (v_0')}\{\mathcal{H}_{\ell}^{-1}(t)\} \leq \exp (c_2\rho^* \ell^{m/2n}),
$$
where 
$$
c_1 = \max_{t \in supp(f_0)}
\left\{\epsilon + \int_{0}^{t}b(s)ds \right\} <0,
$$
and 
$$
c_2 = \max_{t \in supp(v_0')}\left\{- \left[\epsilon + \int_{0}^{t}b(s)ds\right]\right\} <0.
$$

Therefore, 
\begin{equation}\label{f_l,tLv}
	\|f_{\ell}\|_{\sigma,\mu,C} \cdot  \|^tLv_{\ell}\|_{\sigma,\mu,C'} 
	\leq \ell^{m/n} \exp\left[\rho^*(c_1 + c_2)\ell^{m/2n} + 4 \eta\ell^{1/2n\mu}\right].
\end{equation}

Then, since $\mu \geq \frac12 \geq \frac1{m},$  choosing  $\eta$ such that 	$\rho^*(c_1 + c_2) + 4\eta<0$ and obtain that		
$$
\lim\limits_{\ell \to \infty} \left[\|f_{\ell}\|_{\sigma,\mu,C} \cdot \|^tLv_{\ell}\|_{\sigma,\mu,C'}\right]  = 0.
$$

On the  other hand, if $\lambda_{\ell}<0$, then the right side in
\eqref{f_l,tLv} it becomes
$$
\ell^{m/n} \exp\left[\rho(c_1 + c_2)\ell^{m/2n} + 4 \eta\ell^{1/2n\mu}\right]
$$
and again we obtain the same contradiction.

It follows  from  \eqref{f_lv_l=0} that
condition \eqref{H_condition} can not be fulfilled implying that
$L$ is not $\mathcal{S}_{\mu}$-globally solvable, for
$\mu \geq \frac12$.

Finally, in the general case $\mathcal{Z} \neq \N$ the proof is given by a slight modification in the previous arguments. Indeed, we may consider the operator $L = D_t +  (a_0+ib(t))P$, and sequences
$$
\tilde{f}_{\ell}(t,x) = 
\left\{
\begin{array}{l}
	\exp(i\lambda_\ell a_0t)f_{\ell}(t,x), \ \textrm{ if } \ \ell \in  \mathcal{Z},\\
	0, \ \textrm{ if } \ \ell \notin \mathcal{Z},
\end{array}
\right.
$$
$$
\tilde{v}_{\ell}(t,x) = 
\left\{
\begin{array}{l}
	\exp(-i\lambda_\ell a_0t)	v_{\ell}(t,x), \ \textrm{ if } \ \ell \in \mathcal{Z}, \\
	0, \ \textrm{ if } \  \ell \notin \mathcal{Z}.
\end{array}
\right.
$$
instead of $f_\ell$ and $v_\ell$.

\qed

\noindent
\textbf{Acknowledgements}
\vskip0.2cm
\noindent
The research of Fernando de \'Avila Silva was supported in part by National Council for Scientific and Technological Development 
- CNPq - (grants 423458/2021-3, 402159/2022-5, 200295/2023-3, and 305630/2022-9). Part of this work was developed while  Fernando was a visitor to the Department of Mathematics at the University of Turin. He is very grateful for all the support and hospitality.

\end{document}